\pgfplotsset{compat=newest}
\crefname{equation}{}{}
\Crefname{equation}{Equation}{Equations}
\crefname{footnote}{Footnote}{Footnotes}
\newcommand{\RR}{\mathbb{R}}
\newcommand{\EE}{\mathbb{E}}
\newcommand{\Aa}{\mathcal{A}}
\newcommand{\Bb}{\mathcal{B}}
\newcommand{\Ff}{\mathcal{F}}
\newcommand{\Pp}{\mathcal{P}}
\newcommand{\Xx}{\mathcal{X}}
\newcommand{\Yy}{\mathcal{Y}}
\newcommand{\Zz}{\mathcal{Z}}
\newcommand{\eps}{\varepsilon}
\newcommand{\si}{\sigma}
\newcommand{\Si}{\Sigma}
\renewcommand{\phi}{\varphi}
\renewcommand{\epsilon}{\varepsilon}
\DeclareMathOperator*{\id}{id}
\DeclareMathOperator*{\graph}{gph}
\DeclareMathOperator*{\diam}{diam}
\DeclareMathOperator*{\rk}{rk}
\DeclareMathOperator*{\diag}{diag}
\newcommand{\gw}{\operatorname{GW}}
\newcommand{\W}{\operatorname{W}}
\newcommand{\vol}{\operatorname{vol}}
\newcommand{\Leb}{\operatorname{Leb}}
\DeclareMathOperator*{\supp}{supp}
\DeclareMathOperator*{\argmax}{arg\,max}
\DeclareMathOperator*{\argmin}{arg\,min}
\def\one{\mathbbm{1}}
\newcommand{\dd}{\mathop{}\!\mathrm d}
\newcommand{\defeq}{\coloneqq}
\newcommand{\defeqinv}{\eqqcolon}
\definecolor{tabblue}{rgb}{0.12156862745098039, 0.4666666666666667, 0.7058823529411765}
\definecolor{taborange}{rgb}{1.0, 0.4980392156862745, 0.054901960784313725}
\definecolor{tabgreen}{rgb}{0.17254901960784313, 0.6274509803921569, 0.17254901960784313}
\definecolor{tabred}{rgb}{0.8392156862745098, 0.15294117647058825, 0.1568627450980392}
\definecolor{tabblue}{HTML}{2b50aa}
\newcommand{\pimon}{\pi_\text{mon}^\uparrow}
\newcommand{\piantimon}{\pi_\text{mon}^\downarrow}
\newcommand{\submod}{S^\uparrow}
\newcommand{\supmod}{S^\downarrow}
\newcommand{\adv}{_\text{adv}}
\newcommand{\push}{_\pushonly}
\newcommand{\pushonly}{\#}
\newcommand{\opt}{^\star}
\newcommand{\capright}{\textbf{(Right)}}
\newcommand{\capleft}{\textbf{(Left)}}
\newcounter{counter}
\numberwithin{counter}{section}
\newtheorem{theorem}[counter]{Theorem}
\newtheorem{lemma}[counter]{Lemma}
\newenvironment{nblemma}[1]{\Anblemma}{\endAnblemma}
\newtheorem{proposition}[counter]{Proposition}
\newenvironment{nbproposition}[1]{\Anbproposition}{\endAnbproposition}
\theoremstyle{definition}
\newtheorem{remark}[counter]{Remark}
\newtheorem{example}[counter]{Example}
\newtheorem{definition}[counter]{Definition}
\newtheorem{conjecture}[counter]{Conjecture}
\numberwithin{equation}{section}
\title[Existence of Monge maps for the Gromov--Wasserstein problem]{On the existence of Monge maps for the\\ Gromov--Wasserstein problem}
\date{}
\author{Théo Dumont$^\dagger$}
\author{Théo Lacombe$^\dagger$}
\author{François-Xavier Vialard$^\dagger$}
\address{\textnormal{$^\dagger$Laboratoire d'Informatique Gaspard Monge, Université Gustave Eiffel, CNRS, F-77454 Marne-la-Vallée, France.}}
\email{\{\href{mailto:theo.dumont@univ-eiffel.fr}{theo.dumont},\href{mailto:theo.lacombe@univ-eiffel.fr}{theo.lacombe},\href{mailto:francois-xavier.vialard@univ-eiffel.fr}{francois-xavier.vialard}\}@univ-eiffel.fr}
\renewcommand{\tocsection}[3]{%
  \indentlabel{\@ifnotempty{#2}{\bfseries\ignorespaces#1 #2\quad}}\bfseries#3}
\renewcommand{\tocsubsection}[3]{%
  \indentlabel{\@ifnotempty{#2}{\ignorespaces#1 #2\quad}}#3}
\def\l@subsection{\@tocline{2}{0pt}{2.5pc}{5pc}{}}
\renewcommand{\tocsubsubsection}[3]{%
  \indentlabel{\@ifnotempty{#2}{\ignorespaces#1 #2\quad}}#3}
\def\l@subsubsection{\@tocline{2}{0pt}{4.5pc}{5pc}{}}
\begin{document}

\setfnsymbol{wiley}

\begin{abstract}
    The Gromov--Wasserstein problem is a non-convex optimization problem over the polytope of transportation plans between two probability measures supported on two spaces, each equipped with a cost function evaluating similarities between points. 
    Akin to the standard optimal transportation problem, it is natural to ask for conditions guaranteeing some structure on the optimizers, for instance if these are induced by a (Monge) map.
    We study this question in Euclidean spaces when the cost functions are either given by (i) inner products or (ii) squared distances, two standard choices in the literature. 
    We establish the existence of an optimal map in case (i) and of an optimal $2$-map (the union of the graphs of two maps) in case (ii), both under an absolute continuity condition on the source measure. 
    Additionally, in case (ii) and in dimension one, we numerically design situations where optimizers of the Gromov--Wasserstein problem are $2$-maps but are not maps. 
    This suggests that our result cannot be improved in general for this cost. 
    Still in dimension one, we additionally establish the optimality of monotone maps under some conditions on the measures, thereby giving insight on why such maps often appear to be optimal in numerical experiments.
    
    \vspace{2mm}
    \noindent\textsc{Keywords.} Non-convex optimization $\cdot$ Optimal transportation $\cdot$ Monge map $\cdot$ Gromov--Wasserstein
    
    \vspace{2mm}
    \noindent\textsc{Mathematics Subject Classification.} 49Q22 $\cdot$ 49Q10 $\cdot$ 90C26 
\end{abstract}
\maketitle

\addtocontents{toc}{\protect\setcounter{tocdepth}{3}}
{
\hypersetup{linkcolor=black}
\tableofcontents
}
\renewcommand*{\thefootnote}{\arabic{footnote}}

\allowdisplaybreaks

\section{Introduction}
\label{sec:introduction}
Finding correspondences between objects that do not live in the same metric space is a problem of fundamental interest both in applications and in theory appearing in very different fields such as computer vision and shape analysis \cite{umeyama1988eigendecomposition,memoli2004comparing,memoli2005theoretical,berg2005shape,memoli2007use,memoli2011gromov}, mathematics \cite{sturm2012space}, biology \cite{demetci2020gromov} and machine learning \cite{redko2020co,alvarez2018gromov}. 
Graph matching \cite{zhou2015factorized} is a prominent example of such a problem.
A usual situation is when the objects of interest are metric spaces themselves, whose comparison is of practical and theoretical importance \cite{gromov1999metric}; in this context, the Gromov--Hausdorff distance has been used in different settings \cite{memoli2008gromov,memoli2021gromov} and its relaxation to $L^p$ spaces, the Gromov--Wasserstein distance, has been explored 
in \cite{sturm2006geometry,memoli2007use,memoli2008gromov}. Following this line of research, comparing metric measure spaces (i.e.~metric spaces endowed with probability measures) using Wasserstein-type distances has attracted a lot of interest \cite{memoli2011gromov,sturm2012space}. Follow-up works over the past decade include for instance \cite{sejourne2021unbalanced,de2022entropy}, which propose extensions to the case of metric spaces with positive measures (whose total volume is not normalized to $1$).
The Gromov--Wasserstein distance and its extensions are applied successfully in machine learning \cite{grave2019unsupervised,bunne2019learning,xu2020learning} and biology \cite{demetci2022scotv2}. 

Following \cite{memoli2007use}, the Gromov--Wasserstein approach consists in seeking for a correspondence between two metric measure spaces, called transport plan, or coupling \cite{santambrogio2015optimal}, that is of lowest distortion. 
In general, transport plans may not be deterministic: a point can be paired with (or transported to) several points of the other space, and conversely. 
A natural question is thus to ask for conditions under which \emph{optimal} transport plans for the Gromov--Wasserstein problem can be guaranteed to be deterministic, that is to be supported on the graph of a map from one space to the other. 
This problem has been put forward by \citeauthor{sturm2012space} \cite[Challenge~3.6]{sturm2012space} and by \citeauthor{MemNeedham22} \cite[Question~2.14]{MemNeedham22}. 
However, in contrast to optimal transport, which is a linear programming problem, the formulation of the Gromov--Wasserstein problem falls in the class of non-convex quadratic assignment problems \cite{koopmans1957assignment}, whose global optimizers are computationally harder to solve and theoretically harder to characterize. 
As a consequence, it is not surprising that fewer results are available in the literature, though some particular instances can be computed in polynomial time \cite{memoli2023ultrametric}. 

In optimal transport, the fact that the optimization can, under mild conditions, be restricted to the space of maps has been developed a lot since \citeauthor{brenier1987decomposition}'s work \cite{brenier1987decomposition} and further generalized by \citeauthor{mccann2001polar} \cite{mccann2001polar}. \citeauthor{brenier1987decomposition}'s result essentially states that, in Euclidean spaces, when the cost function is the squared Euclidean distance and the initial measure has a Lebesgue density, the optimal transport plan is a map that is given by the gradient of a convex function. 
Such results on the structure of optimal plans/maps are of great interest for reducing the optimization set~\cite{makkuva2020optimal}. They heavily depend on the choice of the cost function, and the same can be said for the Gromov--Wasserstein problem, as detailed in this work. 

In this work, we address the question of the existence of optimal maps for the $L^2$-Gromov--Wasserstein problem in Euclidean spaces in two particular cases.
The first one is when the distortion is measured in terms of Euclidean inner products, in which we establish the existence of optimal maps and detail their structure.
The second is when the distortion is measured in terms of squared Euclidean distances, in which we establish the existence of optimal $2$-maps (plans supported on the union of the graphs of two maps) in general, and of optimal maps in specific cases. 
In both cases, we require an absolute continuity condition similar to the one of \citeauthor{brenier1987decomposition}'s theorem.
We also study the second case (squared distances) in dimension one, which has attracted recent attention \cite{titouan2019sliced,beinert2022assignment}.
We provide numerical evidence for a counter-example to the existence of optimal maps, suggesting that our result asserting the existence of optimal $2$-maps cannot be improved in general.
Furthermore, it was numerically observed in \cite{titouan2019sliced} that monotone (non-decreasing or non-increasing) maps between discrete measures in dimension one were often optimal;
however, a counter-example to this claim was designed in \cite{beinert2022assignment}. 
Yet, we show that this holds under suitable conditions, shedding light on why the approach developed in \cite{titouan2019sliced} seems to work well in practice.
We refer the reader to \Cref{subsec:intro_contribs} for a detailed account of our contributions, while the background and state-of-the-art are presented respectively in \Cref{subsec:intro_GW,subsec:intro_relatedworks}.\\~\vspace{-2mm}\\
\noindent\textbf{Notation.}
In the following, 
$\Pp(\Xx)$ is the set of probability measures on a Polish space $\Xx$,
$\Leb_n$ is the $n$-dimensional Lebesgue measure on $\RR^n$,
$\vol_M$ is the volume measure on a manifold $M$,
$\ll$ denotes absolute continuity,
$T\push\mu$ is the pushforward of a measure $\mu\in \Pp(\Xx)$ by a measurable map $T:\Xx\to\Yy$, namely the measure of $\Pp(\Yy)$ defined on Borel sets $A$ by $T\push\mu(A)\defeq \mu(T^{-1}(A))$,
$\chi_S$ is the indicator function of a set $S$,
$O_n(\RR)$ is the set of orthogonal matrices of dimension $n\times n$ with real entries,
$\mathfrak S_n$ is the set of permutations,
$\one_n$ is the vector of $\RR^{n}$ with entries all equal to $1$,
and
``a.e.'' means ``almost everywhere'' or ``almost every'' depending on the context.
Also, to alleviate notation, $\langle \cdot, \cdot \rangle$ and $\|\cdot\|$ respectively denote the inner product and the $2$-norm $\|\cdot\|_2$ on $\RR^n$ for any $n\geq1$.

\subsection{The Gromov--Wasserstein problem}
\label{subsec:intro_GW}

\subsubsection{Formulation}
\label{sec:formulation}
The Gromov--Wasserstein (GW) problem, initially introduced by \citeauthor{memoli2007use}~\cite{memoli2007use},
can be seen as an extension of the Gromov--Hausdorff distance \cite{gromov1999metric} to the context of (probability) measure spaces $(\Xx,\mu)$ and $(\Yy,\nu)$ equipped with cost functions $c_\Xx : \Xx \times \Xx \to \RR$ and $c_{\Yy}:\Yy\times \Yy\to \RR$ (typically, $c_\Xx$ and $c_\Yy$ can be distance functions on $\Xx$ and $\Yy$, respectively).
The GW problem seeks a correspondence $\pi$ (i.e.~a joint law) between $\mu$ and $\nu$ that would make the quantity $\EE_{\pi\otimes\pi}\big[\|c(X,X')-c(Y,Y')\|_{L^p}\big]$ as small as possible, where $(X,Y)\sim\pi$ and $(X',Y')\sim\pi$ are independent coupled random variables. The formal definition is as follows.
\begin{definition}[GW problem]
\label{def:gw}
Let $\Xx$ and $\Yy$ be Polish spaces and $p\geq 1$. Given two probability measures $\mu \in \Pp(\Xx)$ and $\nu  \in \Pp(\Yy)$, two continuous symmetric \emph{cost functions} $c_{\Xx}:\Xx\times \Xx\to \RR$ and $c_{\Yy}:\Yy\times \Yy\to \RR$, the \emph{$L^p$-Gromov--Wasserstein problem} aims at finding
            \begin{align*}
                \tag{GW}
                \gw_p(\mu,\nu)\defeq\inf_{\pi \in \Pi(\mu,\nu)} \Big( \int _{\Xx\times \Yy}\int _{\Xx\times \Yy}\big|c_{\Xx}(x,x')-c_{\Yy}(y,y')\big|^p \dd\pi(x,y)\dd\pi(x',y')\Big)^{1/p},
                \label{eq:gw}
            \end{align*}
where $\Pi(\mu,\nu)$ denotes the subset of $\Pp(\Xx \times \Yy)$ of probability measures that admit $\mu$ and $\nu$ as first and second marginals.
Such an element $\pi\in\Pi(\mu,\nu)$ is said to be a \emph{plan} between $\mu$ and $\nu$. If there exists a measurable map $T : \Xx \to \Yy$ such that $\pi = (\id,T)\push \mu$, then $T$ is said to be a \emph{map} between $\mu$ and $\nu$ (because of the marginal constraints on $\pi$, this is equivalent to requiring $T\push\mu=\nu$).
Plans $\pi\in\Pi(\mu,\nu)$ that solve \cref{eq:gw} will be called \emph{optimal correspondence plans} between $\mu$ and $\nu$, and maps such that $(\id,T)\push \mu$ solves \cref{eq:gw} will be called \emph{optimal correspondence maps}, or \emph{Gromov--Monge maps}, between $\mu$ and $\nu$ (see \cref{rem:terminology} for a note on the terminology we adopt).
\end{definition}

While the existence of optimal correspondence plans holds under mild assumptions by compactness arguments as long as the above minimum is not $+\infty$, much less is known about the existence of optimal correspondence maps, even in simple cases.
In this work, we restrict to the case $\Xx \subset \RR^n$ and $\Yy \subset \RR^{d}$ for two integers $n\geq d$, we fix $p=2$ and omit the ``$L^2$'' when referring to the Gromov--Wasserstein problem for the sake of conciseness. 
We now consider the following two specific instances of \cref{eq:gw}:
\begin{enumerate}[(i),leftmargin=*]
    \item the \emph{inner product} case \cite{alvarez2018gromov,vayer2020contribution}, where $c_\Xx$ and $c_\Yy$ compare angles in $\RR^n$ and $\RR^d$, respectively:
        \begin{equation}
            \tag{GW-IP}
            \min _{\pi \in \Pi(\mu, \nu)} \int_{\Xx\times\Yy}\int_{\Xx\times\Yy}\big|\langle x, x'\rangle-\langle y, y'\rangle\big|^2  \dd\pi(x, y) \dd\pi(x', y'),
            \label{eqn:GW-inner-prod}
        \end{equation}
        \item and the \emph{squared distance} case \cite{sturm2012space,vayer2020contribution}, where $c_\Xx$ and $c_{\Yy}$ are the squared Euclidean $2$-norms on $\RR^n$ and $\RR^d$, respectively:
        \begin{equation}
            \tag{GW-SD}
            \min _{\pi \in \Pi(\mu, \nu)} \int_{\Xx\times\Yy}\int_{\Xx\times\Yy}\big|\|x-x'\|^2-\|y-y'\|^2\big|^2  \dd\pi(x, y) \dd\pi(x', y').
            \label{eqn:GW-squared-distance}
        \end{equation}
    \end{enumerate}
        The choice (ii) is standard, as it fits the more general setting of $c_{\Xx}$ and $c_\Yy$ being powers of some distance functions $d_\Xx$ and $d_\Yy$ on $\Xx$ and $\Yy$. In this case, if $\mu$ and $\nu$ have full support, $\gw_p(\mu,\nu) = 0$ is equivalent to the metric measure spaces $(\Xx, d_\Xx, \mu)$ and $(\Yy, d_\Yy, \nu)$ being {isomorphic}, where by isomorphic we mean that is there exists an isometry $\varphi : (\Xx, d_\Xx) \to (\Yy, d_\Yy)$ such that $\varphi \push \mu = \nu$~\cite[Lemma 9.2]{sturm2012space}.\footnote{Note that in the inner product case, $\gw(\mu,\nu)=0$ implies that the metric measure spaces are isomorphic, but the converse is not true---for instance in cases where $(\Xx, d_\Xx, \mu)$ and $(\Yy, d_\Yy, \nu)$ are point clouds in an Euclidean space that simply differ by a translation, meaning $\Yy=T(\Xx)$, $d_{\Xx}=d_{\Yy}$ the Euclidean distance, and $\nu=T\push\mu$, where $T:x\mapsto x+x_0$.\label{foot:IP-GW}}
        A~subcase of problem (ii) is given when $\mu = \frac{1}{N} \smash{\sum_{i=1}^N \delta_{x_i}}$ and $\nu = \frac{1}{N} \smash{\sum_{j=1}^N \delta_{y_j}}$ are uniform probability measures supported on $N$ points each. In this scenario, optimal correspondence plans $\pi$ can be chosen as permutations $\sigma$ of $\{1,\dots,N\}$ (see \cite[Theorem~1]{maron2018probably} and \cite[Theorem~4.1.2]{vayer2020contribution}), and the problem optimizes over the set of such permutations $\mathfrak{S}_N$,
        \begin{equation}
            \tag{QAP}
            \min_{\sigma\in \mathfrak{S}_N}\ \sum_{i,j} \big| \|x_i - x_j\|^2 - \|y_{\sigma(i)} - y_{\sigma(j)}\|^2 \big|^2,
            \label{eq:QAP}
        \end{equation}
        which is a particular case of the \emph{Quadratic Assignment Problem}, first introduced in \cite{koopmans1957assignment}.
\begin{remark}[Assumptions on the cost functions]
    The Gromov--Wasserstein problem has been carefully studied in \cite{memoli2007use,memoli2011gromov,sturm2012space} in the context of \emph{metric measure spaces}, where it is therefore required that the cost functions $c_{\Xx}$ and $c_{\Yy}$ are distance functions on $\Xx$ and $\Yy$, respectively. It was further observed by \citeauthor{sturm2012space} in~\cite[Section 5]{sturm2012space} that the definition of GW can be extended not only to powers of distance functions but also to the more general setting of \emph{gauged spaces}, where cost functions are merely required to be symmetric and square-integrable---in particular, they are no longer required to satisfy the triangle inequality. \citeauthor{chowdhury2019gromov}~\cite{chowdhury2019gromov} then studied \emph{network spaces}, where the symmetry assumption is further removed. Our definition of the GW problem solely requires cost functions to be continuous and symmetric.
\end{remark}

\subsubsection{Relation with the optimal transportation problem: a tight bilinear relaxation}
\label{subsec:intro_GW_and_OT}
Let us first recall the formulation of the Optimal Transportation (OT) problem, which will play an extensive role in this work.
\begin{definition}[OT problem]
    Let $\Xx$ and $\Yy$ be Polish spaces. Given two probability measures $\mu \in\Pp(\Xx)$ and $\nu \in\Pp(\Yy)$ and a continuous \emph{cost function} $c:\Xx\times \Yy\to \RR$, we consider the problem
    \begin{equation}
        \tag{OT}
        \min_{\pi \in \Pi(\mu,\nu)} \int _{\Xx\times \Yy}c(x,y)\dd\pi(x,y) .
        \label{eq:OT}
    \end{equation}
Plans $\pi\in\Pi(\mu,\nu)$ that solve \cref{eq:OT} are called \emph{optimal transport plans} between $\mu$ and $\nu$, and maps such that $(\id,T)\push \mu$ solves \cref{eq:OT} are called \emph{optimal transport maps}, or \emph{Monge maps}, between $\mu$ and $\nu$.
\end{definition}

\begin{remark}[A note on terminology]
    \label{rem:terminology}
    Note that while elements of $\Pi(\mu,\nu)$ will always be called \emph{plans} in this work, optimal plans for \cref{eq:OT} and \cref{eq:gw} will respectively be called optimal \emph{transport} plans and optimal \emph{correspondence} plans. By doing so, we deviate from the terminology of ``optimal couplings'', standard in the literature \cite{sturm2012space,chowdhury2019gromov,memoli2022comparison}, that does not differentiate between optimality for \cref{eq:OT} and optimality for \cref{eq:gw}. We make this distinction to avoid confusion in the rest of the paper, as we will be sometimes simultaneously dealing with plans that are optimal for \cref{eq:OT} and others that are optimal for \cref{eq:gw}. The same remark holds about \emph{Monge} and \emph{Gromov--Monge} maps. 
\end{remark}

The minimization problem in \cref{eq:gw} can be interpreted as the minimization of the map $\pi \mapsto F(\pi,\pi) \defeq \iint k \dd \pi\! \otimes\! \pi$ where $k((x,y),(x',y')) = |c_\Xx(x,x') - c_\Yy(y,y')|^2$, and $F$ is a symmetric bilinear map.
The first-order optimality condition ensures that if $\pi\opt$ minimizes \cref{eq:gw}, then it also minimizes $\pi \mapsto 2 F(\pi, \pi\opt)$.
Letting $C_{\pi\opt}(x,y) \defeq \int_{\Xx \times \Yy} k((x,y),(x',y')) \dd \pi\opt(x', y')$, this is up to a factor $2$ the linear problem
\begin{equation}
    \min_{\pi \in \Pi(\mu,\nu)} \int_{\Xx \times \Yy} C_{\pi\opt}(x,y) \dd \pi(x,y),
    \label{eq:linearized}
\end{equation}
which is nothing but the \cref{eq:OT} problem induced by the cost $C_{\pi\opt}$ on $\Xx \times \Yy$.
Therefore, any optimal \emph{correspondence} plan for \cref{eq:gw} with costs $c_\Xx$, $c_\Yy$ must be an optimal \emph{transportation} plan for \cref{eq:OT} with cost $C_{\pi\opt}$.
A crucial point, proved in \cite[Theorem~3]{sejourne2021unbalanced} as a generalization of a result by \citeauthor{konno1976maximization}~\cite{konno1976maximization}, is that if $k$ is symmetric negative on the set of (signed) measures on $\Xx \times \Yy$ with null marginals, that is $\int k \dd \alpha\!\otimes\!\alpha \leq 0$ for every such $\alpha$, then the converse implication holds: any solution $\gamma\opt \in \Pi(\mu,\nu)$ of the \cref{eq:OT} problem with cost $C_{\pi\opt}$ is also a solution of the \cref{eq:gw} problem, that is 
\begin{equation}\label{eq:biconvex_relaxation}
    F(\pi\opt,\pi\opt) = F(\gamma\opt,\gamma\opt) = F(\pi\opt,\gamma\opt).
\end{equation}
This result is useful in the present article to derive theoretical properties on the minimizers but also for the alternate minimization algorithms that were first proposed in \cite{memoli2007use} and extended in \cite{sejourne2021unbalanced}.
The question of the existence and structure of optimal transport maps has been extensively studied in optimal transportation and a review can be found in \cref{subsec:intro_relatedworks_MongeOT}.
Since in this case, the solutions of GW are in correspondence with the solutions of an OT problem, the tools and knowledge from optimal transportation theory can be used to derive existence and structure of optimal correspondence maps.
In particular, this holds for our two problems of interest \cref{eqn:GW-squared-distance} and \cref{eqn:GW-inner-prod}: if $\alpha$ denotes a finite signed measure on $\Xx \times \Yy \subset \RR^n \times \RR^d$ with null marginals, observe that
\begin{align*}
    &\hspace{-1.8cm}\int \big| \|x-x'\|^2 - \|y-y'\|^2\big|^2 \dd \alpha\!\otimes\!\alpha(x,y,x',y') \\
    &= \underbrace{\int \|x-x'\|^2 \dd \alpha\! \otimes\! \alpha}_{=\ 0} + \underbrace{\int \|y-y'\|^2 \dd \alpha\! \otimes\! \alpha}_{=\ 0} \ -\ 2 \int \|x-x'\|^2 \|y-y'\|^2 \dd \alpha\! \otimes\! \alpha \\
    & = -2 \int \big(\|x\|^2 - 2 \langle x, x' \rangle + \|x\|^2\big)\big(\|y\|^2 - 2 \langle y , y' \rangle + \|y'\|^2\big) \dd \alpha\! \otimes\! \alpha.
\end{align*}
Expanding the remaining integrand involves nine terms. Given that $\alpha$ has null marginals (in particular, null total mass), we get $\int \|x\|^2 \|y\|^2 \dd \alpha\! \otimes \!\alpha = 0$ (and similarly for the terms involving $\|x'\|^2 \|y'\|^2$, $\|x\|^2\|y'\|^2$ and $\|x'\|^2\|y\|^2$), as well as $\int \|x\|^2 \langle y,y' \rangle \dd \alpha\! \otimes\! \alpha = 0$ (and similarly for the other terms).
Eventually, the only remaining term is
\begin{equation}
    \label{Eqcovariance}
    - 8 \int \langle x, x' \rangle \langle y,y' \rangle \dd \alpha \!\otimes\! \alpha 
    =  - 8 \Big\| \int xy^\top \dd \alpha(x,y) \Big\|_F^2 \leq 0, 
\end{equation}
where $\| \cdot \|_F$ denotes the Frobenius norm of a matrix. The same holds for non-finite $\alpha$ by density.
The negativity of this term ensures that solutions of \cref{eqn:GW-squared-distance} are exactly the solutions of an OT problem.
Computations for \cref{eqn:GW-inner-prod} are similar---actually, they immediately boil down to the same last two equations. 
More generally, let $d_\Xx$ and $d_\Yy$ be distance functions on $\Xx$ and $\Yy$. When one considers the cost $(d_\Xx(x,x') - d_\Yy(y,y'))^2$, by expanding the square, the only term that matters in the optimization is $-2d_\Xx(x,x')d_\Yy(y,y')$. 
Whenever it is possible to write $d_\Xx$ and $d_\Yy$ as squared distances in Hilbert spaces, namely $d_\Xx(x,x') = \|\varphi(x) - \varphi(x')\|_{H_\Xx}^2$ and $d_\Yy(y,y') = \|\psi(y) - \psi(y')\|_{H_\Yy}^2$ for an embedding $\varphi : \Xx \to H_\Xx$ in a Hilbert space $H_\Xx$ and similarly for $\Yy$, computation \cref{Eqcovariance} holds. 
Such a property depends on the metric space. When it is satisfied, the metric space is said to be of \emph{negative type}, or the distance to be \emph{Hilbertian}. Several important Riemannian manifolds are of negative type; among them the real hyperbolic space, the sphere, and the Euclidean space. 
Counter-examples are for instance the hyperbolic space on the quaternions \cite{faraut1974distances} or the $2$-Wasserstein space on $\RR^d$ for $d\geq 3$ \cite{andoni2018snowflake}. We refer to \cite{lyons2013distance} for a thorough discussion. Another equivalent formulation is to say that $d_\Xx$ is a \emph{conditionally negative definite kernel} on $\Xx$.
\begin{definition}[CND kernel]
    A function $k_\Xx:\Xx \times \Xx \to \RR$ is a \emph{conditionally negative definite (CND) kernel} if it is symmetric and for all $N\geq 1$, $x_1,\ldots,x_N \in \Xx$ and $\omega_1,\ldots,\omega_N \in \RR$ such that $\sum_{i=1}^N \omega_i = 0$, one has $\sum_{i,j=1}^N\omega_i\omega_jk_\Xx(x_i,x_j) \leq 0$.
\end{definition}
Every CND kernel can be written as $k_\Xx(x,x') = f(x) + f(x') + \frac 12 \| \varphi(x) - \varphi(x')\|^2_{H}$ for an embedding $\varphi: \Xx \to H$ with $H$ a Hilbert space and $f:\Xx\to\RR$ a function \cite{schoenberg1938metric}. As far as the Gromov--Wasserstein functional is concerned, our discussion above shows that $c_\Xx$ and $c_\Yy$ can actually be replaced with CND kernels and that the relaxation still holds. Obviously, due to the square in the $L^2$-GW problem, the ``sign'' of both kernels does not matter. 
The following proposition sums up the discussion.
\begin{proposition}[Tight bilinear relaxation of GW]
Let $(\Xx,k_\Xx,\mu)$ and $(\Yy,k_\Yy,\nu)$ be two Polish spaces endowed each with a conditionally negative definite kernel (or each with a conditionally positive definite one) and a probability measure. Then the bilinear relaxation of GW is tight, in the sense of \cref{eq:biconvex_relaxation}. The corresponding kernel $k((x,y),(x',y'))$ yields a non-positive quadratic form on signed measures with null marginals on $\Xx \times \Yy$.
\end{proposition}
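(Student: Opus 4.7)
The plan is to establish the non-positivity of the quadratic form $\alpha \mapsto \iint k \dd\alpha\!\otimes\!\alpha$ on signed measures with null marginals on $\Xx\times\Yy$, where $k((x,y),(x',y')) = |k_\Xx(x,x') - k_\Yy(y,y')|^2$. Tightness of the bi-convex relaxation \eqref{eq:biconvex_relaxation} then follows from the already invoked \cite[Thm.~3]{sejourne2021unbalanced}, and the CPD case reduces to the CND one, since replacing $k_\Xx, k_\Yy$ by their opposites leaves $k$ unchanged.

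First I would apply Schoenberg's representation \cite{schoenberg1938metric} to rewrite each kernel as
\[
k_\Xx(x,x') = g_\Xx(x) + g_\Xx(x') - \langle \varphi_\Xx(x), \varphi_\Xx(x')\rangle_{H_\Xx},
\]
with $g_\Xx(x) = f_\Xx(x) + \tfrac{1}{2}\|\varphi_\Xx(x)\|^2_{H_\Xx}$, and analogously for $k_\Yy$; this is the abstract counterpart of the Euclidean splitting $|x-x'|^2 = |x|^2 + |x'|^2 - 2\langle x,x'\rangle$ used in the computation preceding the statement. Then I would expand $|k_\Xx - k_\Yy|^2 = k_\Xx^2 + k_\Yy^2 - 2 k_\Xx k_\Yy$ and integrate each summand against $\alpha\!\otimes\!\alpha$. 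The squares $k_\Xx^2$ and $k_\Yy^2$ depend only on $(x,x')$ or only on $(y,y')$, so by Fubini they integrate to $0$ against the vanishing marginals of $\alpha$. For the cross term, expanding both factors via Schoenberg yields $3\times 3 = 9$ sub-terms, eight of which contain a factor depending on only one of $x, x', y, y'$ (namely $g_\Xx(x)$, $g_\Xx(x')$, $g_\Yy(y)$ or $g_\Yy(y')$); each such sub-term integrates to $0$ by the same Fubini argument.

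The only surviving contribution is
\[
-2\int \langle \varphi_\Xx(x), \varphi_\Xx(x')\rangle_{H_\Xx}\, \langle \varphi_\Yy(y), \varphi_\Yy(y')\rangle_{H_\Yy} \dd\alpha(x,y)\dd\alpha(x',y'),
\]
which, by the Hilbert tensor identity $\langle u\otimes v, u'\otimes v'\rangle = \langle u,u'\rangle\langle v,v'\rangle$, equals $-2\bigl\|\textstyle \int \varphi_\Xx(x) \otimes \varphi_\Yy(y) \dd\alpha(x,y)\bigr\|^2_{H_\Xx \otimes H_\Yy} \leq 0$. This is the exact generalization of the Frobenius identity appearing in the Euclidean computation and delivers the claimed non-positivity; tightness then follows from the cited result.

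The main obstacle is the bookkeeping of the nine-term expansion, namely listing each sub-term and verifying via Fubini that the eight ``single-variable factor'' ones all vanish against the null-marginal signed measure $\alpha$. A minor technical point is integrability, which one handles by arguing first for compactly supported or finitely supported $\alpha$ where all integrals are plainly finite, and then extending by density to arbitrary signed $\alpha$ with null marginals, as already invoked in the paragraph preceding the statement.
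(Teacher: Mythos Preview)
Your proposal is correct and follows essentially the same approach as the paper: the paper presents this proposition as a summary of the preceding discussion, where it performs the Euclidean computation explicitly, then invokes Schoenberg's representation $k_\Xx(x,x') = f(x) + f(x') + \tfrac{1}{2}\|\varphi(x)-\varphi(x')\|^2_H$ to reduce the general CND case to that computation. Your write-up is a cleaner, self-contained version of the same argument, expanding directly via the inner-product form of Schoenberg and isolating the surviving tensor term in one pass.
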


\pagebreak
\subsection{Related work}
\label{subsec:intro_relatedworks}

\subsubsection{Monge maps for the OT problem}
\label{subsec:intro_relatedworks_MongeOT}
The \cref{eq:OT} problem has been extensively studied (see \cite{santambrogio2015optimal,villani2009optimal,peyre2019computational} for a thorough introduction) and particular attention has been devoted to situations where the existence of Monge maps, or variations of, can be ensured.
\citeauthor{brenier1987decomposition}'s theorem \cite{brenier1987decomposition}, stated below, is the best-known of these results.

\begin{theorem}[\citeauthor{brenier1987decomposition}'s theorem]
    \label{theorem:brenier}
    Let $\Xx=\Yy=\RR^d$, $c(x,y)=\|x-y\|^2$, and $\mu,\nu \in\Pp(\RR^d)$ such that the optimal cost between $\mu$ and $\nu$ is finite. If $\mu\ll \Leb_d$, then there exists a unique (up to a set of $\mu$-measure zero) solution $\pi\opt$ of \cref{eq:OT} and it is induced by a map, which is the gradient of a convex function $f:\RR^d\to\RR$:
    \begin{equation*}
        \pi\opt=(\id,\nabla f)\push\mu.
    \end{equation*}
\end{theorem}
This central result admits a generalization in the manifold setting, initially proposed by \citeauthor{mccann2001polar}~\cite{mccann2001polar}, that we shall use later on:
\begin{theorem}[{\cite[Theorem~10.41]{villani2009optimal}}]
\label{prop:quad-cost-manifold-villani}
    Let $\Xx=\Yy=M$ be a Riemannian manifold with geodesic distance $d$, and $c(x, y)=d(x, y)^2$. Let $\mu, \nu\in\Pp(M)$ with compact support. If $\mu\ll\vol_M$,
    then there exists a unique solution $\pi\opt$ of \cref{eq:OT} and it is induced by a map, which is the gradient of a $d^2 / 2$-convex function $f:M\to\RR$ (see \cref{rem:c-cvx}):
    \begin{equation*}
        \pi\opt=(\id,\exp_x(\nabla f))\push\mu.
    \end{equation*}
\end{theorem}

This theorem can be extended in a few directions. The condition that $\mu$ has a density can be weakened to the fact that it does not give mass to sets of Hausdorff dimension smaller than $d-1$ (e.g.~hypersurfaces), and $c$ can be more general than the squared distance function, as long as it satisfies the \emph{twist condition}, defined below.
In the following, $\Xx$ and $\Yy$ are complete Riemannian manifolds and $c:\Xx\times\Yy\to \RR$ is a continuous cost function, differentiable with respect to $x$.
We refer to \cite{mccann2011five,villani2009optimal} for more information on the twist condition, to \cite{chiappori2010hedonic,ahmad2011optimal,mccann2012glimpse} for the introduction of the subtwist condition, and to \cite{moameni2016characterization} for that of the $m$-twist and generalized twist conditions.

\begin{proposition}[Twist condition]
\label{prop:twist}
    We say that $c$ satisfies the \emph{twist condition} if
    \begin{equation}
        \tag{Twist}
        \text{for all }x_0\in\Xx,\quad  y\mapsto \nabla_x c(x_0,y)\in T_{x_0}\Xx \text{ is injective.}
        \label{eq:twist}
    \end{equation}
    Suppose that $c$ satisfies \cref{eq:twist} and assume that any $c$-concave function is differentiable $\mu$-a.e.~on its domain. If $\mu$ and $\nu$ have finite transport cost, then \cref{eq:OT} admits a unique optimal transport plan $\pi\opt$ and it is induced by a map, which is the gradient of a $c$-convex function $f:\Xx\to\RR$:
    \begin{equation*}
    \pi\opt=(\id,c\text{-}\exp_x(\nabla f))\push\mu.
    \end{equation*}
\end{proposition}
\begin{remark}[$c$-convexity, $c$-exponential]
    \label{rem:c-cvx}
    The notion of $c$-convexity is a generalization of plain convexity, and we refer for instance to \citeauthor{villani2009optimal} for the definition \cite[Definition 5.2]{villani2009optimal} as well as some intuition on $d^2/2$-convex functions \cite[pp.~355--356]{villani2009optimal}.
    Following \citeauthor{loeper2009regularity}~\cite[Definition 2.6]{loeper2009regularity} and \citeauthor{villani2009optimal}~\cite[Definition 12.29]{villani2009optimal}, we also recall that the $c$-\emph{exponential map} is defined such that $c$-$\exp_x(p)$ is a $y\in\Yy$ satisfying $\nabla_xc(x,y)+p=0$, when it exists and is unique. This notion reduces to the usual Riemannian exponential map when $c=d^2/2$.
\end{remark}
\begin{remark}[Local twist condition and uniqueness]
Suppose $\Xx$ and $\Yy$ $d$-dimensional and $\mu$ and $\nu$ compactly supported. \citeauthor{mccann2012rectifiability}~\cite{mccann2012rectifiability} showed that if additionally $c$ is $C^2$ and satisfies the weaker condition of \emph{non-degeneracy} $\det D^2_{xy}c(x,y)\neq0$ for all $(x,y)\in \Xx\times\Yy$, any solution of \cref{eq:OT} is supported on a $d$-dimensional Lipschitz submanifold (instead of $2d$ in full generality); but with no guarantee of uniqueness. General conditions for the uniqueness of the solutions of the \cref{eq:OT} problem to hold have recently been established in \cite{moameni2020uniquely}.
\end{remark}
\noindent Costs of the form $c(x,y)=h(x-y)$ with $h$ strictly convex, and in particular the costs $c(x,y)=\|x-y\|^p$ for $p>1$, do satisfy the twist condition. Unfortunately, it cannot be satisfied for smooth costs on compact manifolds.
Two weaker notions can however be introduced to retain some (but less) structure on optimal plans.
\begin{proposition}[Subtwist condition]
\label{prop:subtwist}
    We say that $c$ satisfies the \emph{subtwist condition} if
    \begin{equation}
        \tag{Subtwist}
        \begin{array}{l}
            \text{for all } y_1\neq y_2\in\Yy,\ x\in \Xx\mapsto c(x,y_1)-c(x,y_2) \text{ has no critical points,}\\
            \text{save at most one global maximum and at most one global minimum.}
            \end{array}
        \label{eq:subtwist}
    \end{equation}
    Suppose that $c$ satisfies \cref{eq:subtwist} and is bounded. If $\mu$ vanishes on each hypersurface, then \cref{eq:OT} admits a unique optimal transport plan $\pi\opt$ and it is supported on the union of a graph (map from $\Xx$ to $\Yy$) and an anti-graph (map from $\Yy$ to $\Xx$):
    \begin{equation*}
        \pi\opt=(\id , G)\push \bar\mu+(H, \id)\push(\nu-G\push \bar\mu)
    \end{equation*}
    for some (Borel) measurable maps $G:\Xx\to\Yy$ and $H: \Yy\to\Xx$ and non-negative measure $\bar\mu \leq \mu$ such that $\nu-G\push \bar\mu$ vanishes on the range of $G$. We call such a plan a \emph{map/anti-map}.
\end{proposition}
\begin{proposition}[$m$-twist condition]
\label{prop:mtwist}
    We say that $c$ satisfies a \emph{$m$-twist condition} if
    \begin{equation}
        \tag{$m$-twist}
        \text{for all } (x_{0},y_{0}) \in \Xx\times \Yy,\text{ the set } \{y \mid \nabla_x c(x_{0}, y)=\nabla_x c(x_{0}, y_{0})\}\text{ has at most } m \text{ elements.}
        \label{eq:mtwist}
    \end{equation}
    Suppose that $c$ is bounded, satisfies \cref{eq:mtwist}, and assume that any $c$-concave function is differentiable $\mu$-almost surely on its domain. If $\mu$ has no atom and $\mu$ and $\nu$ have finite transport cost, then each optimal plan $\pi\opt$ of \cref{eq:OT} is supported on the graphs of $k\leq m$ measurable maps, i.e.~there exists non-negative functions $\alpha_i:\Xx\to[0,1]$ and (Borel) measurable maps $T_i:\Xx\to\Yy$ such that
    \begin{equation*}
    \pi\opt=\sum_{i=1}^{k} \alpha_{i}(\id, T_i)\push \mu,
    \end{equation*}
    in the sense $\pi\opt(S)=\sum_{i=1}^k\int_\Xx\alpha_i(x)\chi_S(x,T_i(x))\dd\mu(x)$ for any Borel $S\subset \Xx\times\Yy$. We call such a plan a \emph{$m$-map}.
\end{proposition}

\begin{example}[$2$-twist for squared distance GW in dimension one]
\label{example:2-twist}
If $\Xx = \Yy = \RR$ and $c(x,y)= x^2 y^2 + \lambda xy$ for some $\lambda\neq0$, the $2$-twist condition holds and there exists an optimal correspondence $2$-map, i.e.~a plan that is supported on the graph of two maps. As we shall see in \Cref{subsec:quadra1D}, such costs are closely related to the GW problem with squared distance cost \cref{eqn:GW-squared-distance} in dimension one.
\end{example}

\begin{remark}[Simplifying assumptions]
    \label{rem:simple}
    When measures $\mu$ and $\nu$ have compact support and $\mu$ has a density---which are assumptions that we make in the following---, the conditions on measures $\mu$ and $\nu$ of \cref{prop:twist,prop:subtwist,prop:mtwist} required to apply the twist conditions are satisfied \cite[Remark 10.33]{villani2009optimal}.
\end{remark}

\subsubsection{Gromov--Monge maps for the GW problem}
Besides being an interesting mathematical question in itself and in addition to reducing the computational complexity of the problem, preferring Gromov--Monge maps in the Gromov--Wasserstein setting is interesting for registration purposes via mappings in imaging, shape analysis, machine learning, and simulation-based inference \cite{HurSimulationGWMonge22,zhang2022cycle}.
The interest of restricting the Gromov--Wasserstein problem to the class of mappings is discussed by \citeauthor{MemNeedham22} in \cite{MemNeedham22}, where it is shown that this formulation retains some properties of a metric (Theorem~3). Cases where doing so doesn't change the value of the \cref{eq:gw} optimization problem are discussed in \cite{memoli2022comparison}.\\
In sharp contrast with the optimal transportation problem, there are very few results that ensure the existence of a Gromov--Monge map for the GW problem, even in the particular cases considered in this work.
The following \cref{prop:sota-titouan,prop:sturm,prop:sota-titouan-quad} ensure the existence of a Gromov--Monge map under restrictive conditions, either on the symmetry of the measures $\mu$ and $\nu$ or on the rank of a matrix that depends on an optimal correspondence plan, which is nontrivial to check in practice as we usually have no such plan beforehand. For the inner product case, one has the following result by \citeauthor{vayer2020contribution}:

\begin{proposition}[Inner product GW, {\cite[Theorem~4.2.3]{vayer2020contribution}}]
\label{prop:sota-titouan}
    Let $n\geq d$ and $\mu, \nu\in \Pp(\RR^n)\times \Pp(\RR^d)$ two measures of finite second order moment with $\mu\ll\Leb_n$. Suppose that there exists a solution $\pi\opt$ of \cref{eqn:GW-inner-prod} such that $M\opt \defeq \int y x^\top  \dd\pi\opt(x, y)$ is of full rank. Then there exists an optimal correspondence map between $\mu$ and $\nu$, which can be written as
    $T=\nabla f\circ M\opt$ with $f: \RR^d \to \RR$ convex.
\end{proposition}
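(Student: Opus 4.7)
The idea is to leverage the bi-convex tightness of the GW relaxation \eqref{eq:biconvex_relaxation} to reduce \eqref{eqn:GW-inner-prod} to a linearized OT problem whose cost is bilinear in $x$ and $y$ through $M\opt$, and then to apply Brenier's theorem after pushing $\mu$ forward by $x \mapsto M\opt x$.

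First I would invoke \eqref{eq:biconvex_relaxation}: since the inner-product cost induces a non-positive quadratic form on signed measures with null marginals (as the excerpt computes), any optimizer of the linearized problem \eqref{eq:linearized} with cost $C_{\pi\opt}$ is also a minimizer of \eqref{eqn:GW-inner-prod}. Expanding
\begin{equation*}
    C_{\pi\opt}(x,y) = \int \left|\langle x,x'\rangle - \langle y,y'\rangle\right|^2 \dd\pi\opt(x',y'),
\end{equation*}
the squared terms $\int \langle x,x'\rangle^2 \dd\pi\opt$ and $\int \langle y,y'\rangle^2 \dd\pi\opt$ depend only on $x$ and only on $y$ respectively, hence contribute constants to any $\pi \in \Pi(\mu,\nu)$; the only active term is $-2 \int \langle x,x'\rangle \langle y,y'\rangle \dd\pi\opt(x',y') = -2\,\langle y,\, M\opt x\rangle$. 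Thus, up to marginal-dependent constants, the linearized problem is equivalent to minimizing $\int |y - M\opt x|^2 \dd\pi(x,y)$ over $\Pi(\mu,\nu)$.

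Second, I would factor this OT problem through the linear map $\Phi : \RR^n \to \RR^d$, $x \mapsto M\opt x$, and set $\tilde\mu \defeq \Phi\push\mu$. Every $\pi \in \Pi(\mu,\nu)$ pushes forward via $(\Phi,\id)$ to some $\tilde\pi \in \Pi(\tilde\mu,\nu)$ with identical cost $\int |y-z|^2 \dd\tilde\pi(z,y)$; conversely, any Monge plan $(\id,S)\push\tilde\mu$ on $\RR^d \times \RR^d$ lifts back to $(\id, S\circ \Phi)\push \mu \in \Pi(\mu,\nu)$ with the same cost, so both problems share the same infimum. The full-rank hypothesis on $M\opt$ makes $\Phi$ a surjective linear submersion, so preimages of $\Ll^d$-null sets are $\Ll^n$-null (by Fubini, after splitting coordinates adapted to a $d\times d$ invertible minor of $M\opt$); hence $\mu \ll \Ll^n$ forces $\tilde\mu \ll \Ll^d$. \Cref{theorem:brenier} then yields a unique optimal map $S = \nabla f$ with $f : \RR^d \to \RR$ convex pushing $\tilde\mu$ to $\nu$; setting $T \defeq \nabla f \circ M\opt$, the plan $(\id, T)\push\mu$ solves the linearized OT problem and, by tightness, it is also GW-optimal.

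The main delicate point is the factorization through $\Phi$, namely that no lifted plan $\pi$ with nontrivial conditional mass on the fibers of $\Phi$ can do strictly better than the lift of the Brenier map on the reduced problem. This is exactly the existence-of-Monge-maps-for-submersion-costs principle singled out in the introduction as the technical backbone of the paper; granted it, the argument above is essentially routine. The other ingredient requiring care is the absolute continuity of $\Phi\push\mu$, which is precisely where the full-rank hypothesis on $M\opt$ enters.
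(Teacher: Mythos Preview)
Your proof is correct and follows the same route as the paper's treatment (which derives this proposition as the $h=d$ special case of \cref{theorem:inner-main} in the remark following its proof): linearize \eqref{eqn:GW-inner-prod} to the OT cost $-\langle M\opt x,y\rangle$, factor through the linear map $x\mapsto M\opt x$, apply Brenier on $\RR^d$, and lift the resulting map back. The paper carries out this factorization via an SVD of $M\opt$ together with \cref{lemma:reparam,lemma:scaled-Brenier}, machinery it needs for arbitrary rank but which is superfluous under the full-rank hypothesis; note also that the hedging in your final paragraph is unnecessary---your own pushforward/lift argument (every $\pi$ pushes down with equal cost, and the Brenier map on the base lifts back with equal cost) is already complete and does not require the general fiber machinery of \cref{theo:fibers-main}, which is only needed when the target side is itself fibered, i.e.\ precisely in the non-full-rank case.
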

\noindent For the squared distance case, in \cite{titouan2019sliced} is claimed that in the discrete case in dimension one with uniform mass and same number of points $N$, the optimal solution of the discrete GW problem \cref{eq:QAP} would either be the identity $\sigma(i)=i$ or the anti-identity $\sigma(i)=N+1-i$ (Theorem~3.1). However, a counter-example to this assertion has been recently provided by \citeauthor{beinert2022assignment}~\cite{beinert2022assignment}.
To the best of our knowledge, the only positive results on the existence of Gromov--Monge maps for the squared distance cost are the following.
\begin{proposition}[Squared distance GW, {\cite[Theorem~9.21]{sturm2012space}}]
    \label{prop:sturm}
    Let $\mu,\nu\in\Pp(\RR^n)$. Assume that $\mu,\nu\ll\Leb_n$ and that both measures are rotationally invariant around their barycenter. Then every solution $\pi\opt$ of \cref{eqn:GW-squared-distance} is induced by a map, unique up to composition with rotations, which rearranges the radial distributions of $\mu$ and $\nu$  around their barycenter in a monotone non-decreasing way.
\end{proposition}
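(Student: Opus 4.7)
The plan is to exploit the bi-convex relaxation together with the rotational symmetries of $\mu$ and $\nu$ to reduce \eqref{eqn:GW-quadratic} to an OT problem that decouples into an angular alignment and a one-dimensional OT on the radial marginals $\mu_r, \nu_r$ (the pushforwards of $\mu, \nu$ under $x \mapsto |x|$).

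By translation invariance of the quadratic GW cost, I assume the common barycenter is at the origin, so that $\int x\, d\mu = \int y\, d\nu = 0$ by rotational invariance. Let $\pi\opt$ be an optimizer of \eqref{eqn:GW-quadratic}. By the bi-convex relaxation \eqref{eq:biconvex_relaxation}, $\pi\opt$ also minimizes the linearized OT cost $\int C_{\pi\opt}\, d\pi$ over $\Pi(\mu,\nu)$, where $C_{\pi\opt}(x,y) := \int (|x-x'|^2 - |y-y'|^2)^2\, d\pi\opt(x',y')$. Expanding the square into nine terms and discarding all contributions depending only on $x$ or only on $y$ (which do not affect the OT problem), the zero-mean property of $\mu$ and $\nu$ eliminates every remaining integrand linear in $x'$ or $y'$, leaving the equivalent OT cost
\begin{equation*}
\tilde C(x,y) \;=\; -2\,|x|^2|y|^2 \,-\, 8\,\langle y,\, M(\pi\opt)\, x\rangle, \qquad M(\pi) := \int y \otimes x \, d\pi(x,y).
\end{equation*}

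The rotational invariance of $\mu$ and $\nu$ ensures $(R,R)\push \pi\opt$ is GW-optimal for every $R \in O(n)$, with $M((R,R)\push \pi\opt) = R\,M(\pi\opt)\,R^T$. A symmetrization within the convex set of OT-optimizers sharing the common linearized cost yields an optimal plan for which $M(\pi\opt) = \lambda\opt\, I$ with $\lambda\opt \geq 0$ (the sign can be flipped via the push-forward $(x,y)\mapsto(x,-y)$, which preserves $\nu$ by rotational invariance). Passing to polar coordinates $x = r\theta$, $y = s\phi$, the simplified cost becomes $-2r^2 s^2 - 8\lambda\opt\, rs\,\langle\theta,\phi\rangle$: for fixed radii $r, s > 0$, this is minimized over directions by aligning $\phi$ with $\theta$ when $\lambda\opt > 0$, and is indifferent to the angular part when $\lambda\opt = 0$. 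The residual radial problem is a one-dimensional OT between $\mu_r$ and $\nu_r$ with reward $r^2 s^2 + 4\lambda\opt\, rs$; since both $r^2 s^2$ and $rs$ are supermodular on $[0,\infty)^2$, the monotone rearrangement $S\colon[0,\infty)\to[0,\infty)$ with $S\push\mu_r = \nu_r$ is the unique optimal coupling, well-defined because $\mu \ll \Ll^n$ forces $\mu_r$ to be atomless. Lifting to $\RR^n$ yields the Monge map $T(r\theta) = S(r)\,U\theta$, where $U \in O(n)$ encodes the global angular rotation---this is precisely the ``unique up to rotations'' freedom in the statement.

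The main obstacle is the symmetrization step producing $M(\pi\opt) = \lambda\opt\, I$: the equivariance gives an $O(n)$-orbit of admissible matrices under conjugation, but extracting a single optimal plan with scalar $M(\pi\opt)$ requires either a careful averaging within the convex set of OT-optimizers of a common linearized cost, or an alternative decomposition of the form $F(\pi,\pi) = 2\int(|x|^2-|y|^2)^2\, d\pi + \mathrm{const} - 8\,\|M(\pi)\|_F^2$ combined with a direct argument that both summands are simultaneously extremized by the proposed radial-monotone construction.
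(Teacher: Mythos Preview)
This proposition is quoted in the paper as a result from Sturm \cite[Thm.~9.21]{sturm2012space}; the present paper does not prove it, so there is no in-paper argument to compare against. I therefore assess your sketch on its own merits.

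There is a genuine gap at the symmetrization step, and it is not merely a matter of missing details. You argue that by averaging $(R,R)_\#\pi\opt$ over $R\in O(n)$ inside ``the convex set of OT-optimizers sharing the common linearized cost'' one obtains an optimizer with $M(\pi\opt)=\lambda\opt I$. But the linearized cost $C_{\pi\opt}$ depends on $M(\pi\opt)$, and $(R,R)_\#\pi\opt$ has correlation matrix $R\,M(\pi\opt)\,R^\top$, hence is an OT-optimizer for a \emph{different} linearized cost $C_{(R,R)_\#\pi\opt}$, not for $C_{\pi\opt}$. The averaging therefore does not stay inside the set you describe. More fundamentally, the conclusion you aim for is false: if $T(x)=S(|x|)\,x/|x|$ is the radial monotone map, then $R\circ T$ is also GW-optimal for every $R\in O(n)$, and a direct computation using rotational invariance of $\mu$ gives $M\big((\id,R\circ T)_\#\mu\big)=\lambda R$ with $\lambda=\tfrac{1}{n}\int rS(r)\,\dd\mu_r(r)$. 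So optimal plans need not have scalar correlation matrix; at best $M(\pi\opt)$ is a scalar multiple of an orthogonal matrix, and even that has to be \emph{proved}, which is essentially the content of the theorem.

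Your alternative route (b) via the exact decomposition $F(\pi,\pi)=2\int(|x|^2-|y|^2)^2\,\dd\pi-8\|M(\pi)\|_F^2+\text{const}$ is correct and is in fact the backbone of Sturm's argument. But you stop before the real work: one must show that the radial monotone map (up to rotation) simultaneously minimizes the first term and maximizes $\|M(\pi)\|_F^2$ over all of $\Pi(\mu,\nu)$. The first is immediate (one-dimensional OT with a convex cost on $r^2$), but the second is not: the naive bound $\|M(\pi)\|_F\le\int|x||y|\,\dd\pi$ is never saturated when $\mu$ has a density, and Haar-averaging $(R,R)_\#\pi$ actually \emph{decreases} $\|M(\cdot)\|_F^2$ (since $\int_{O(n)}RAR^\top\,\dd R=\tfrac{\tr A}{n}I$ and $|\tr A|\le\sqrt n\,\|A\|_F$). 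Establishing that $\|M(\pi)\|_F^2$ is maximized exactly by the rotated radial monotone maps, and that this forces every optimizer to be of that form (the ``every $\pi\opt$'' part of the statement), is the substantive step you have not supplied.

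Finally, even granting $M(\pi\opt)=\lambda\opt I$, your polar-coordinate argument is heuristic: when $\lambda\opt>0$ the cost $-2r^2s^2-8\lambda\opt rs\langle\theta,\phi\rangle$ is not fiber-invariant in the sense of \cref{theo:fibers-main}, so the claimed clean radial/angular decoupling of the OT problem requires justification; and when $\lambda\opt=0$ the linearized cost ignores angles entirely, so many non-deterministic couplings solve the OT problem and one must rule this case out separately.
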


\begin{proposition}[Squared distance GW, {\cite[Proposition 4.2.4]{vayer2020contribution}}]
\label{prop:sota-titouan-quad}
    Let $n\geq d$ and $\mu, \nu\in \Pp(\RR^n)\times \Pp(\RR^d)$ with compact support.
    Assume that $\mu\ll\Leb_n$ and that both $\mu$ and $\nu$ are centered. Suppose that there exists a solution $\pi\opt$ of \cref{eqn:GW-squared-distance} such that $M\opt=\int y x ^\top\dd\pi\opt(x, y)$ is of full rank. Then there exists $f:\RR^d\to\RR$ convex such that $T=\nabla f \circ M\opt$ pushes $\mu$ to $\nu$. Moreover, if there exists a differentiable convex function $F:\RR\to\RR$ such that $\|T(x)\|^2=F'(\|x\|^2)$ $\mu$-a.e., then $T$ is an optimal correspondence map between $\mu$ and $\nu$.
\end{proposition}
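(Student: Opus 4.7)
The plan is to treat the two conclusions of the proposition separately. For the existence claim, I would reduce directly to \cref{theorem:brenier}. Since $M\opt:\RR^n\to\RR^d$ is linear and of full rank $d$, a Fubini-type argument (for instance, via the singular value decomposition $M\opt = U\Sigma V^\top$ combined with $\mu\ll\Ll^n$) shows that $\tilde\mu\defeq M\opt\push\mu$ is absolutely continuous with respect to $\Ll^d$. Brenier's theorem applied to the compactly supported pair $(\tilde\mu,\nu)$ then produces a convex $f:\RR^d\to\RR$ with $(\nabla f)\push\tilde\mu=\nu$, and setting $T=\nabla f\circ M\opt$ yields a valid transport map from $\mu$ to $\nu$.

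For the optimality claim I would rely on the bi-convex relaxation of \Cref{subsec:intro_GW_and_OT}: by \cref{eq:biconvex_relaxation}, it suffices to show that $\pi_T\defeq(\id,T)\push\mu$ is a minimizer of the linearized \cref{eq:OT} problem with cost $C_{\pi\opt}(x,y)=\int ||x-x'|^2-|y-y'|^2|^2\dd\pi\opt(x',y')$. The first step is to expand the square and integrate out the $(x',y')$ variables: using the centering hypotheses $\int x'\dd\mu=0$, $\int y'\dd\nu=0$ together with the definition of $M\opt$, every term in $C_{\pi\opt}(x,y)$ that couples $x$ and $y$ collapses to a function of $x$ or $y$ alone (hence a constant over $\Pi(\mu,\nu)$), except for two:
\begin{equation*}
    \tilde C(x,y)\defeq -2|x|^2|y|^2-8\langle y,M\opt x\rangle,
\end{equation*}
so that minimizing $\int C_{\pi\opt}\dd\pi$ over $\Pi(\mu,\nu)$ is equivalent to minimizing $\int\tilde C\dd\pi$.

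I would then construct Kantorovich dual potentials certifying the $\tilde C$-optimality of $\pi_T$ via a \emph{double} Fenchel--Young inequality that exploits the two summands of $\tilde C$. Applied to the convex $F$ at the scalars $|x|^2,|y|^2$, Fenchel--Young gives $F(|x|^2)+F^*(|y|^2)\geq |x|^2|y|^2$ with equality iff $|y|^2=F'(|x|^2)=|T(x)|^2$; applied to the convex $f$ at the vectors $M\opt x$ and $y$, it gives $f(M\opt x)+f^*(y)\geq\langle M\opt x,y\rangle$ with equality iff $y=\nabla f(M\opt x)=T(x)$. Setting
\begin{equation*}
    \phi(x)\defeq-2F(|x|^2)-8f(M\opt x),\qquad \psi(y)\defeq-2F^*(|y|^2)-8f^*(y),
\end{equation*}
yields the pointwise bound $\phi(x)+\psi(y)\leq\tilde C(x,y)$, with equality on the graph $\{y=T(x)\}$. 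For any $\pi\in\Pi(\mu,\nu)$, integrating gives $\int\tilde C\dd\pi\geq\int\phi\dd\mu+\int\psi\dd\nu=\int\tilde C\dd\pi_T$, which is the $\tilde C$-optimality of $\pi_T$, hence its $C_{\pi\opt}$-optimality, and hence via \cref{eq:biconvex_relaxation} the optimality of $\pi_T$ for \cref{eqn:GW-quadratic}.

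The main obstacle is essentially bookkeeping: the reduction $C_{\pi\opt}\rightsquigarrow\tilde C$ asks to expand $(|x-x'|^2-|y-y'|^2)^2$ and to track carefully which of the many cross terms survive after using the centering assumptions and the definitions of $\int x'\otimes x'\dd\mu$, $\int y'\otimes y'\dd\nu$ and $M\opt$. A secondary technical point is the integrability of the potentials: compactness of the supports and continuity of $F,f$ ensure that $\phi$ is bounded on $\supp(\mu)$, while $\psi$ may a priori take the value $-\infty$, but the identity $\psi\circ T=\tilde C(\cdot,T(\cdot))-\phi$ valid $\mu$-a.e.\ together with $T\push\mu=\nu$ forces $\psi$ to be finite $\nu$-a.e.\ and $\int\psi\dd\nu$ to be finite, so the duality argument goes through.
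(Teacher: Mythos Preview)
The paper does not supply its own proof of this proposition: it is quoted verbatim from \cite{vayer2020contribution} as part of the related-work survey in \Cref{subsec:intro_relatedworks}, so there is no in-paper argument to benchmark against. That said, your proposal is sound as a self-contained proof.

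Your existence step is standard and correct: full rank of $M\opt$ makes $M\opt\push\mu\ll\Ll^d$, and \cref{theorem:brenier} produces the convex potential $f$. For the optimality step, your idea of certifying $\tilde C$-optimality by \emph{summing} two Fenchel--Young inequalities (one scalar for the $-|x|^2|y|^2$ block via $F$, one vectorial for the $-\langle M\opt x,y\rangle$ block via $f$) is exactly the right way to exploit the extra hypothesis $|T(x)|^2=F'(|x|^2)$: this hypothesis is precisely what forces equality in the \emph{first} Fenchel--Young pair on the graph of $T$, while equality in the second is automatic from $T=\nabla f\circ M\opt$. The reduction from $C_{\pi\opt}$ to $\tilde C$ is the same computation the paper carries out in \Cref{subsec:applications_quadratic} (your constants differ from the paper's $-|x|^2|y|^2-4\langle M\opt x,y\rangle$ by a harmless global factor of~$2$), and your appeal to \cref{eq:biconvex_relaxation} to pass from $C_{\pi\opt}$-optimality back to \cref{eqn:GW-quadratic}-optimality is legitimate since the quadratic kernel is shown there to be conditionally negative. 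Your integrability caveat is also well placed: $F^*(|y|^2)$ and $f^*(y)$ are a priori only guaranteed finite $\nu$-a.e.\ via the equality case on $\graph(T)$, which is enough for the duality inequality to be non-vacuous.

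It may be worth contrasting your route with the paper's own machinery. \Cref{theorem:quad-main}(i) treats the same full-rank regime but without the hypothesis on $F$, and accordingly only obtains a bi-map via the $2$-twist/subtwist structure of $\tilde C$. Your argument shows that the additional norm-compatibility condition $|T(x)|^2=F'(|x|^2)$ is exactly what upgrades the bi-map to a genuine Monge map, by furnishing an explicit pair of dual potentials that the general twist analysis cannot see.
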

It is of interest to note that \cref{prop:sturm} implies that the optimal correspondence map between two Gaussian measures in dimension one is a monotone map (non-decreasing or non-increasing). Yet, in higher dimensions, the optimal correspondence plan in the Gaussian case is not known unless more constraints on the structure of the plan are imposed \cite[Theorem~4.1]{salmona2021gromov}.

\subsection{Outline, contributions and perspectives}
\label{subsec:intro_contribs}

This work is organized in the following way.
\Cref{subsec:general_thm} provides a general setting in which the existence of optimal \emph{transport} maps can be shown for optimal transport problems with costs that are invariant on the fibers of some projection.
We provide two versions of the result: \cref{theo:fibers-abstract} requires no structure and is fairly general, and \cref{theo:fibers-main} imposes a more structured setting and therefore yields more structure for the optimal maps. The latter has the benefit of being more usable in further theoretical or computational analysis. Its proof requires a measurability argument which is addressed in detail in \cref{prop:selection-manifold}.
In \Cref{subsec:applications}, these two results are applied to the GW problem in Euclidean spaces. When the cost functions are inner products \cref{eqn:GW-inner-prod}, we establish the existence of Gromov--Monge maps (see \cref{theorem:inner-main}), improving on \cite[Theorem~4.2.3]{vayer2020contribution} (see \cref{prop:sota-titouan}) by dispensing with the rank assumption, difficult to check in practice. 
When the cost functions are squared distances \cref{eqn:GW-squared-distance}, we establish the existence of optimal $2$-maps in the general case (see \cref{theorem:quad-main}). Yet, we show that Gromov--Monge maps can occur as well in some specific settings that depend on the rank of some matrix, akin to Theorem~4.2.3 in \cite{vayer2020contribution}. 

An important question is to know if the result in the squared distance case \cref{eqn:GW-squared-distance} is tight, that is if one can prove or disprove the existence of a Gromov--Monge \emph{map} in general, rather than merely a \emph{$2$-map}. 
We address this question in \Cref{subsec:quadra1D} in the one-dimensional case. 
We first prove some structural properties of the optimal plan, then conduct a numerical exploration that suggests the tightness of \cref{theorem:quad-main} in dimension one. 
Namely, we numerically exhibit two measures that satisfy the assumptions of our \cref{theorem:quad-main} and for which the solution of \cref{eqn:GW-squared-distance} is a $2$-map but not a map. 
We also prove a positive result on the optimality of the monotone rearrangements, which partly explains why such plans are often optimal in practice. 
Notably, this result highlights the importance of the long-range effects of the squared distance cost function. 
In comparison, \citeauthor{sturm2012space}'s result \cite{sturm2012space} (\cref{prop:sturm} above) concerns symmetric measures and does not account for the frequent optimality of the monotone rearrangements in dimension one. 
As a possible extension, it is of interest to know if \citeauthor{sturm2012space}'s optimality result is stable with respect to the input measures. 
We show that this is not the case in dimension one: there exist small perturbations of symmetric measures (namely, the degenerate $\mu=\nu=\delta_0$) for which the monotone rearrangements are not optimal. 
Note that this result does not prove the instability in non-degenerate symmetric cases. 
We also exhibit empirically a non-symmetric case with instability.

Several questions and research directions are left open by our work. 
Among them, we may mention the following ones:
\begin{enumerate*}
    \item Characterize (or at least give examples of) cost functions that yield a proper Gromov--Wasserstein distance (that would notably equal $0$ if and only if the metric measure spaces are isomorphic, contrary to the inner product case, see discussion in \cref{sec:formulation}) and that guarantee the existence of Gromov--Monge maps. Note that such cost functions, although well-behaved in the sense of our study, may not fit every practical application. Indeed, being free to choose the cost regardless of whether it defines a distance, for instance to favor local or long-range effects, may be highly relevant since it directly affects the optimal correspondence plans, as well as the optimization problem.
    \item Establish sharper sufficient conditions for the optimality of the monotone rearrangements in dimension one for \cref{eqn:GW-squared-distance} than the one we obtain in \cref{subsec:quadra_1D_positive}.
    \item Assess the stability of optimal correspondence plans or maps with respect to the input measures to understand the impact of discretization, sampling, etc, for computational matters. For instance, when attempting to solve \cref{eqn:GW-squared-distance} between two measures with densities on $\RR$ in \cref{subsec:quadra1D}, we naturally rely on a finite-grid discretization of the real line to run computations and assume that the thus obtained optimal correspondence plan between our discretized measures (which is a $2$-map) is a faithful approximation of the optimal correspondence plan between the underlying measures (which is thus expected to be a $2$-map, though we have no formal proof of this).
    \item Design algorithms that leverage the knowledge our work gives on the structure of the optimizers of \cref{eqn:GW-inner-prod,eqn:GW-squared-distance} (roughly speaking, they can be parameterized by gradients of convex functions), for instance by learning such a map modeled as an Input Convex Neural Network~\cite{amos2017input} in the vein of~\cite{korotin2021neural,korotin2022neural}. 
\end{enumerate*}

\section{Existence of Monge maps for fiber-invariant costs}
\label{subsec:general_thm}

This section provides the main result on the existence of Monge maps for OT problems for which the cost satisfies an invariance property.
As detailed in \Cref{subsec:applications}, this property will be satisfied by the transport costs $C_{\pi\opt}$ arising from the first-order condition of \cref{eqn:GW-squared-distance} and \cref{eqn:GW-inner-prod}---see \Cref{subsec:intro_GW_and_OT}.

\subsection{Statement of the results}

We briefly present the main idea, also represented on \cref{fig:visu-fibers}. Let $\mu,\nu$ be two probability measures supported on a measurable space $(E,\Sigma_E)$ and consider a measurable map $\varphi : E \to B$, for some measurable space $(B,\Sigma_B)$, referred to as the \emph{base space} later on. 
We shall omit to mention the $\sigma$-algebras afterward. 
Let $(\mu_u)_{u \in B}$ and $(\nu_u)_{u \in B}$ denote disintegrations of $\mu$ and $\nu$ with respect to $\varphi$ (see \cref{sec:disintegration} or for instance \cite[Theorem~5.3.1]{ambrosio2005gradient} for a definition).
Let $c : E \times E \to \RR$ that is invariant on the fibers of $\varphi$,
that is $c(x,y) = \tilde{c}(\varphi(x), \varphi(y))$ for all $(x,y) \in E\times E$ and some cost function $\tilde{c}:B \times B\to\RR$.
Solving the OT problem between $\mu$ and $\nu$ on $E\times E$ with cost $c$ reduces to solving the OT problem between $\varphi\push \mu$ and $\varphi\push \nu$ on $B \times B$ with cost $\tilde{c}$.
Assuming that there exists a Monge map $t_B$ between $\varphi\push\mu$ and $\varphi\push\nu$,
we build a Monge map $T$ between $\mu$ and $\nu$ by (i) transporting each fiber $\mu_u$ onto $\nu_{t_B(u)}$ using a \emph{map} $T_u$, and (ii) gluing the $(T_u)_{u \in B}$ together to define a \emph{measurable} map $T$ satisfying $T \push \mu = \nu$. This map $T$ will be optimal as it coincides with $t_B$ on $B$ and the cost $c$ does not depend on the fibers $(\varphi^{-1}(u))_{u \in B}$.
We emphasize that ensuring the measurability of the map $T$ is non-trivial and crucial from a theoretical standpoint.

    \begin{figure}[h]
        \centering
        \def\munushiftx{6}
\def\munushifty{0.7}
\def\Hlength{\munushiftx+2*\marginLR+0.2}
\def\marginLR{2}
\def\Hangle{1}
\def\Hheight{2.5}
\def\compXslant{0.3}
\def\colormu{tabred!30}
\def\colornu{tabblue!30}
\def\colorFibermu{tabred}
\def\colorFibernu{tabblue}
\def\colorT{tabpurple}
\begin{tikzpicture}[line cap=round,line join=round]
\tikzset{
    point/.style={
        thick,
        draw=gray,
        cross out,
        inner sep=0pt,
        minimum width=4pt,
        minimum height=4pt,
    },
}
\def\mushiftx{\marginLR cm}
\def\mushifty{4.2 cm }
\def\nushiftx{\marginLR cm +\munushiftx cm}
\def\nushifty{\munushifty cm + 4 cm - 0.5 cm}
\def\decalmu{0.5 cm}
\def\courbe{7}
\def\courbeSmall{7}
\def\asym{2}
\def\sides{1}
\begin{scope}[yshift=5cm,xshift=5mm]
\draw [->] (0,0) to[out=90-\courbeSmall,in=-90+\courbeSmall+\asym] (0,0.5);
\draw [->] (0,0) to[out=\courbeSmall,in=-180-\courbeSmall-\asym] (0.5,0);
\draw [->] (0,0) to[out=-45+\courbeSmall+\sides+\asym,in=-180-\courbeSmall-\sides-45] (0.26,-0.26);
\end{scope}
\begin{scope}[yscale=1,xscale=-1,xshift=-11.5 cm,rotate=-2]
\draw [closed, xshift=\compXslant cm-\Hheight*0.5 cm, fill=gray!10] (0.3,0) to[out=\courbe,in=-180-\courbe-\asym] (\Hlength,0) to[out=45+\courbe+\sides+10,in=180+45-\courbe-\sides-\asym] (\Hlength+\Hangle*\Hheight-0.5,\Hheight) to[out=-180-\courbe-\asym,in=\courbe] (\Hangle*\Hheight,\Hheight) to[out=-180+45-\courbe-\sides-\asym,in=45+\courbe+\sides] cycle;
\end{scope}
\draw [xslant=0.707, fill=\colormu, xshift=\compXslant cm+\decalmu-.4cm, yshift=.5cm,draw=\colorFibermu] (\marginLR,1) ellipse (1 and 0.5);
\draw [xslant=0.707, fill=\colornu, xshift=\compXslant cm+0.5cm, yshift=-.3cm,draw=\colorFibernu] (\marginLR+\munushiftx-1,1+\munushifty) ellipse (0.6 and 0.5);
\begin{scope}[yscale=1,xscale=-1,rotate=-2]
\node at (-2.2,-.35) [above right, rotate=10] {$B$};
\end{scope}
\node at (-.5+.5-.1,5+0.3) [below right] {$E$};
\draw[use Hobby shortcut,closed=true,fill=\colormu,xshift=\mushiftx+\decalmu,yshift=\mushifty,draw=\colorFibermu] (0,0.4) .. (0.6,0.2) .. (1.4,0.6) .. (2.1,0.7) .. (2.2,1) .. (1.6,1.4) .. (0.9,1.2) .. (0.2,1);
\draw[use Hobby shortcut,closed=true,fill=\colornu,xshift=\nushiftx-0.2cm,yshift=\nushifty,draw=\colorFibernu] (0.4,0) .. (0.2,0.6) .. (0.6, 1.3) .. (1,1.6) .. (1.4,1.6) .. (1.2,0.9) .. (1,0.2);
\node at (\marginLR+2.4,1.9)                           [below left, yshift=4cm, xshift=\decalmu,color=\colorFibermu] {$\mu$};
\node at (\marginLR+\munushiftx+1+0.2,0.4cm+\nushifty+1cm) [below right,color=\colorFibernu]            {$\nu$};

\draw[xshift=\mushiftx+14mm+\decalmu,yshift=\mushifty+6mm,draw=\colorFibermu]        (0,0) arc (-90:90:0.15 and 0.38);
\draw[xshift=\mushiftx+14mm+\decalmu,yshift=\mushifty+6mm,dotted,draw=\colorFibermu] (0,0.38*2) arc (90:270:0.15 and 0.38);
\draw[xshift=\nushiftx,yshift=\nushifty+6mm,draw=\colorFibernu]        (0,0) arc (-180:0:0.455 and 0.15);
\draw[xshift=\nushiftx,yshift=\nushifty+6mm,draw=\colorFibernu,dotted]        (0,0) arc (180:0:0.455 and 0.15);

\draw[\colorFibermu,line width=1.5,xshift=\mushiftx+5mm+\decalmu,yshift=\mushifty+2mm] (0,0.9) -- (0,0) node[below, \colorFibermu]      {$\mu_u$};
\draw[\colorFibernu,line width=1.5,xshift=\nushiftx+3mm,yshift=\nushifty] (0,1.16) -- (0,-0.02) node[below left, \colorFibernu] {$\nu_{t_B(u)}$};
\draw [dashed,->] (\mushiftx+1cm+\decalmu-0.2cm,  \mushifty+0.4cm-0.12cm) to[bend left=7] node[midway,right] {$\varphi$} (\mushiftx+1cm+\decalmu,  \mushifty-3cm+0.4cm) ;
\draw [dashed,->] (\nushiftx+0.5cm,\nushifty-0.05cm) to[bend left=7] node[midway,right] {$\varphi$} (\nushiftx+0.5cm+0.15cm,\nushifty-3cm+0.2cm) ;
\draw [->] (\mushiftx+2.06cm+\decalmu-0.1cm,\mushifty-2.9cm+0.2cm) to[bend left=7] node[midway,below] {$t_B$}          (\nushiftx-0.2cm+0.39cm,\nushifty-3.2cm+0.4cm) ;
\draw [->] (\mushiftx+5mm+\decalmu+0.2mm, \mushifty+6.5mm) to[bend left=7] node[midway,below] {$T_u$} (\nushiftx+3mm,\nushifty+8mm) ;
\draw [circle, inner sep=0pt, minimum size=1.5pt,xshift=\decalmu]  (2.7,0.9+0.5) node[fill, label={below left:$\phantom{(}u$}] {};
\draw [circle, inner sep=0pt, minimum size=1.5pt]  (8.5,1.1) node[fill, label={below right:$t_B(u)$}] {};

\end{tikzpicture}
        \caption{Illustration of the construction of a Monge map between $\mu$ and $\nu$: we optimally transport the projections of the measures in $B$ and then ``lift'' the resulting map $t_B$ to $E$ by sending each fiber $\mu_u$ onto the fiber $\nu_{t_B(u)}$, resulting from the disintegrations of $\mu$ and $\nu$ by $\varphi$.}
        \label{fig:visu-fibers}
    \end{figure}

We propose two theorems to formalize this idea: the first one (\cref{theo:fibers-abstract}) guarantees the existence of a Monge map for the \cref{eq:gw} problem in a fairly general setting, but its construction is quite convoluted and there is little to no hope that it can be leveraged in practice, either from a theoretical or computational perspective.
Assuming more structure (\cref{theo:fibers-main}), in particular on the fibers of $\varphi$, enables the construction of a Monge map for \cref{eq:gw} with a structure akin to the one of \Cref{prop:quad-cost-manifold-villani}.
As detailed in \Cref{subsec:applications}, both \cref{eqn:GW-squared-distance} and \cref{eqn:GW-inner-prod} fall in the latter setting.

\begin{theorem}
    \label{theo:fibers-abstract}
    Let $\Xx$ and $\Yy$ be two measurable spaces for which there exist two measurable maps $\Phi_\Xx : \Xx \to \RR^d$ and $\Phi_\Yy : \Yy \to \RR^d$
    that are injective, and whose inverses are measurable.
    Let $\mu \in \Pp(\Xx)$ and $\nu \in \Pp(\Yy)$ be two probability measures.
    Let $c : \Xx \times \Yy \to \RR$ and $B_{\Xx},B_{\Yy}$ be two measurable spaces along with measurable maps $\varphi : \Xx \to B_{\Xx}$ and $\psi : \Yy \to B_{\Yy}$.
    Assume that there exists a cost $\tilde{c} : B_{\Xx} \times B_{\Yy} \to \RR$ such that
    \begin{equation*}
        c(x,y)=\tilde c(\varphi(x),\psi(y))\qquad \text{ for all } (x,y)\in \Xx \times \Yy
    \end{equation*}
    and that there exists a Monge map $t_B : B_{\Xx} \to B_{\Yy}$ between $\varphi \push \mu$ and $\psi \push \nu$ for the cost $\tilde{c}$.
    Assume that there exists a disintegration $(\mu_u)_{u \in B_{\Xx}}$ of $\mu$ with respect to $\varphi$ such that $\varphi\push\mu$-a.e., $\mu_u$ is atomless.\\
    Then there exists a Monge map $T$ between $\mu$ and $\nu$ for the cost $c$. Furthermore, it can be chosen to project onto $t_B$ through $(\varphi,\psi)$, in the sense that $(\varphi,\psi)\push(\id,T)\push \mu = (\id, t_B) \push(\varphi \push \mu)$.
\end{theorem}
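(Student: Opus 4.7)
The plan is to build $T$ fiber-by-fiber along $\varphi$, and then glue. First, I disintegrate $\mu$ (resp.~$\nu$) against $\varphi$ (resp.~$\psi$), yielding families $(\mu_u)_{u\in B_+}$ and $(\nu_v)_{v\in B_-}$ with $\mu_u$ concentrated on $\varphi^{-1}(u)$ and $\nu_v$ on $\psi^{-1}(v)$. For $\varphi\push\mu$-a.e.~$u$, the conditional $\mu_u$ is atomless by hypothesis. Using the injections $\Phi_\Xx, \Phi_\Yy$ with measurable inverses, each fiber is identified with a Borel subset of $\RR^d$, so both $\varphi^{-1}(u)$ and $\psi^{-1}(t_B(u))$ carry standard Borel structures. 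A classical consequence of the Borel isomorphism theorem then produces, for each such $u$, a measurable map $T_u : \varphi^{-1}(u) \to \psi^{-1}(t_B(u))$ with $T_u\push\mu_u = \nu_{t_B(u)}$ (\textit{e.g.}~by composing a measurable isomorphism of $\mu_u$ onto the Lebesgue measure on $[0,1]$ with a measurable push-forward map onto $\nu_{t_B(u)}$, such as a generalized inverse CDF).

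The key technical obstacle is to choose the family $(T_u)_u$ in a jointly measurable fashion, so that $T(x) \defeq T_{\varphi(x)}(x)$ defines a measurable map $\Xx \to \Yy$. I would proceed either (i) by an explicit canonical construction---transferring everything to $\RR^d$ via $\Phi_\Xx, \Phi_\Yy$, disintegrating $\Phi_\Xx\push\mu$ in coordinates, and applying a parametric quantile-type inverse to realize $\Phi_\Yy\push\nu_{t_B(u)}$---or (ii) by a measurable selection theorem (Kuratowski--Ryll-Nardzewski, or Aumann's selection theorem) applied to the correspondence $u \mapsto \{T : T\push\mu_u = \nu_{t_B(u)}\}$. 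The measurable-inverse hypothesis on $\Phi_\Xx, \Phi_\Yy$ is precisely what guarantees that these tools apply. This measurability step is the main hurdle of the theorem.

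Once $T$ is constructed, the push-forward identity $T\push\mu = \nu$ follows by recomposing disintegrations: for any measurable $A \subset \Yy$,
\[
T\push\mu(A) = \int_{B_+} T_u\push\mu_u(A)\dd(\varphi\push\mu)(u) = \int_{B_+} \nu_{t_B(u)}(A)\dd(\varphi\push\mu)(u) = \int_{B_-} \nu_v(A)\dd(\psi\push\nu)(v) = \nu(A),
\]
where the third equality uses $t_B\push(\varphi\push\mu) = \psi\push\nu$. For optimality, any $\pi \in \Pi(\mu,\nu)$ has cost $\int c \dd\pi = \int \tilde c \dd(\varphi,\psi)\push\pi$ and $(\varphi,\psi)\push\pi$ lies in $\Pi(\varphi\push\mu, \psi\push\nu)$; hence the OT cost for $c$ is bounded below by that for $\tilde c$, which equals $\int \tilde c(u, t_B(u))\dd(\varphi\push\mu)(u)$ by optimality of $t_B$. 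By construction $\psi\circ T = t_B\circ\varphi$ holds $\mu$-a.e., so $(\id,T)\push\mu$ attains exactly this lower bound, giving both optimality of $T$ and the projection identity $(\varphi,\psi)\push(\id,T)\push\mu = (\id, t_B)\push(\varphi\push\mu)$.
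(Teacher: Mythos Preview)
Your outline matches the paper's proof closely: disintegrate along $\varphi$ and $\psi$, build fiber maps $T_u$ using the embeddings into $\RR^d$, glue, and verify the push-forward and optimality by pulling back to the base problem. The optimality and push-forward verifications you give are essentially identical to the paper's (its \cref{lemma:decomp-manifold} is exactly your lower-bound argument).

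The one place where the paper is sharper is the joint measurability of $(u,x)\mapsto T_u(x)$. The paper does not stay in $\RR^d$: it composes $\Phi_\Xx,\Phi_\Yy$ with a fixed Borel isomorphism $\sigma_d:\RR^d\to\RR$ to push every fiber measure down to $\RR$, where the monotone transport map $\tilde T_u$ between $\sigma_{\Xx\pushonly}\mu_u$ and $\sigma_{\Yy\pushonly}\nu_{t_B(u)}$ is \emph{unique}. This uniqueness is not cosmetic---it is precisely the hypothesis of the Fontbona--Gu\'erin--M\'el\'eard selection theorem the paper invokes (\cref{prop:fontbonaRd}), which yields joint measurability of $(u,s)\mapsto\tilde T_u(s)$ because each $\tilde T_u$ is the unique OT map for a fixed cost. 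Your option (i) is morally this, but ``a parametric quantile-type inverse in $\RR^d$'' is not canonical without further choices; the reduction to one dimension is what makes the construction both explicit and unique, and hence selectable. Your option (ii) is riskier: Kuratowski--Ryll-Nardzewski and Aumann select points from a closed-valued correspondence into a Polish space, whereas $u\mapsto\{T:T\push\mu_u=\nu_{t_B(u)}\}$ takes values in an infinite-dimensional space of measurable maps, and checking closedness and measurability there in a topology where selection applies is not routine. So your strategy is right and correctly locates the hurdle, but for a complete argument you should commit to the one-dimensional reduction and the uniqueness-based selection, as the paper does.
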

\noindent The proof of this theorem is provided in \Cref{subsec:proof-abstract}.

\begin{remark}[Atomless assumption]
The atomless assumption on the disintegration $(\mu_u)_u$ is a natural minimal requirement to expect the existence of a map (without specific assumption on the target measure $\nu$) and implies in particular that the fibers $(\varphi^{-1}(u))_{u \in B_{\Xx}}$ should not be discrete (at least $\varphi\push\mu$-a.e.).
Indeed, if for instance $\Xx = \Yy = B_{\Xx} = B_{\Yy} = \RR$ and $\varphi : x \mapsto \|x\|$, the fibers of $\varphi$ are of the form $\{-u, u\}$, hence the disintegrations $(\mu_u)_{u \geq 0}$ and $(\nu_u)_{u \geq 0}$ are discrete and given by $\mu_u(u) \delta_u + (1-\mu_u(u)) \delta_{-u}$ and $\nu_u(u) \delta_u + (1-\nu_u(u)) \delta_{-u}$, and there is no map $T_u$ between two such discrete measures unless we assume that $\mu_u(u) = \nu_u(u)$ or $1-\nu_u(u)$, $\varphi\push\mu$-a.e.\\
Observe also that $\varphi\push\mu$ may have atoms: as we assume the existence of the Monge map $t_B$, it would imply in that case that $\psi\push\nu$ also has atoms.
\end{remark}

\begin{remark}[Projection of Monge maps] The map $T$ projects onto $t_B$, in the sense of \cref{theo:fibers-abstract}, or equivalently
$\psi \circ T(x) = t_B \circ \varphi(x)$, for $\mu$-a.e.~$x$. A more general statement, that is ``any Monge map $T$ between $\mu$ and $\nu$ projects onto a Monge map between $\varphi\push\mu$ and $\varphi\push\nu$'' may not hold in general: $T$ may send two elements of the same fiber onto two different fibers and therefore not project onto a map downstairs.
The statement is however true if we can guarantee that there is a unique optimal transport plan between $\varphi\push\mu$ and $\psi\push\nu$ and that it is induced by a map $t_B$ (e.g.~if we can apply \citeauthor{brenier1987decomposition}'s \cref{theorem:brenier}). In that case, any $T$ necessarily projects onto $t_B$ in the aforementioned sense.
\end{remark}

Under additional assumptions, we can build a more structured Monge map.
Namely, as our goal is to apply \Cref{prop:quad-cost-manifold-villani}, we will assume that the (common) basis $B = B_{\Xx} = B_{\Yy}$ is a manifold, that \emph{almost all} the fibers of $\varphi : E \to B$ are homeomorphic to the \emph{same} manifold $F$, and that all source measures of interest $\mu$, $\mu_u$ and $\varphi\push\mu$ have a density.
We also introduce the following convention: if $\mu \in \Pp(E)$ for some measurable space $E$, $E' \subset E$, and $\varphi : E' \to B$, we let $\varphi\push\mu$ be the (non-negative) measure supported on $B$ defined by $\varphi\push\mu(A) \defeq \mu(\varphi^{-1}(A))$ for $A \subset B$ measurable.
Note that if $\mu(E') = 1$, then $\varphi\push\mu$ defines a probability measure on $B$.
This formalism allows us to state our theorem even when some assumptions only hold $\mu$-almost everywhere.
\begin{theorem}
    \label{theo:fibers-main}
    Let $E_0$ be a measurable space and $B_0$ and $F$ be complete Riemannian manifolds.
    Let $\mu,\nu \in \Pp(E_0)$ be two probability measures with compact support.
    Assume that there exists a set $E\subset E_0$ such that $\mu(E) = 1$ and that there exists a measurable map $\Phi : E \to B_0 \times F$ that is injective and whose inverse, when restricted to its image, is measurable as well.
    Let $p_B$ and $p_F$ denote the projections of $B_0 \times F$ on $B_0$ and $F$ respectively, and
    let $\varphi\defeq p_ B \circ \Phi: E\to B_0$.
    Let $c: E_0 \times E_0 \to \RR$ and suppose that there exists a cost function $\tilde c: B_0 \times B_0 \to \RR$ satisfying the twist condition such that
    \begin{equation*}
    c(x,y)=\tilde c(\varphi(x),\varphi(y))\quad \text{ for all } (x,y)\in E_0\times E_0.
    \end{equation*}
    Assume that $\varphi\push\mu\ll\vol_{B_0}$ and let thus $t_B$ denote the unique Monge map between $\varphi\push\mu$ and $\varphi\push\nu$ for this cost.
    To alleviate notation, we let $\mu' \defeq \Phi\push\mu$ and $\nu' \defeq \Phi\push\nu$ in the following.
    Suppose that there exists a disintegration $(\mu'_u)_{u\in B_0}$ of $\mu'$ by $p_B$ such that for $\varphi\push\mu$-almost every $u$, $\mu'_{u}\ll\vol_F$.\\
    Then there exists an optimal map $T:E\to E$ between $\mu$ and $\nu$ for the cost $c$, which can be decomposed as
    \begin{equation}\label{eq:structureMongeMap}
    \Phi \circ T\circ \Phi^{-1}(u,v)=\big(t_ B (u),t_ F (u,v)\big)=\big(\tilde c\text{-}\exp_u(\nabla f(u)), \exp_v(\nabla g_u(v))\big),
    \end{equation}
    where $f: B_0 \to\RR$ is $\tilde c$-convex, and $g_u: F \to\RR$ are $d_ F ^2/2$-convex for $\varphi\push\mu$-a.e.~$u$.
    Note that $t_ F$ could actually be any measurable function that sends each fiber $\mu'_u$ onto $\nu'_{\smash{t_B(u)}}$.
\end{theorem}
\noindent The proof of this theorem is provided in \Cref{subsec:proof-fibers-main}.
Let us give a simple example that illustrates the role played by our assumptions.
This example has connections with \cref{eqn:GW-squared-distance} as detailed in \Cref{subsec:applications_quadratic}.

\begin{example}[Squared distance GW]
\label{example:fiber-main}
Let $E_0 = \RR^d$ and $E = E_0\backslash \{0\}$, let $B_0 = \RR$ and $F = S^{d-1} = \{x \in E_0\mid \|x\|=1\}$ be the $(d-1)$-dimensional sphere. For convenience, we also introduce the space $B = \RR_{>0}$.
Consider the cost function $c(x,y) = (\|x\| - \|y\|)^2$, so that $c$ only depends on the norm of its entries.
The fibers of the map $x \mapsto \|x\|$ are spheres, with the exception of $x=0$, which invites us to consider the diffeomorphism
\begin{align*}
\Phi : E \to \RR_{>0} \times S^{d-1} = B \times F \subset B_0 \times F, \qquad
x \mapsto \big(\|x\|, x/\|x\|\big).
\end{align*}
From this, we can write $c(x,y) = \tilde{c}(\varphi(x),\varphi(y))$ where $\varphi(x) = \|x\|$ and $\tilde{c}(u,u') = (u - u')^2$ (which satisfies the twist condition).
Now, if $\mu$ has a density on $\RR^d$, so does $\mu'$ on $B_0 \times F$ as $\Phi$ is a diffeomorphism.
The coarea formula gives the existence of a disintegration $(\mu'_u)_{u \in B}$ of $\mu'$ by $p_B : (u,v) \mapsto u$ (note that $p_{B\pushonly}\mu' = \varphi\push\mu$ also has a density) such that all the $\mu'_u$ admit a density on $S^{d-1}$.\\
\cref{theo:fibers-main} thus applies, ensuring the existence of a structured Monge map between $\mu$ and (any) $\nu$ for the cost $c$. It decomposes for almost every $x \in \RR^d$ as a Monge map on the basis $B_0 = \RR$ (although it is actually only characterized on the image of $\varphi$, that is $B = \RR_{>0}$) obtained as the gradient of a convex function $f$ (there is no need for the exponential map here and $\nabla f$ is the non-decreasing mapping between the quantiles of $\varphi\push\mu$ and $\varphi\push\nu$) and a Monge map on each fiber $F = S^{d-1}$, also built from gradients of convex functions (via the exponential map on the sphere).\\
Note that our theorem only requires all the conditions to hold almost everywhere on $E_0 = \RR^d$, which is important since it allows one to ignore the singularity of $\varphi$ at $x = 0$.
\end{example}

\subsection{Proof of \texorpdfstring{\Cref{theo:fibers-abstract}}{Theorem 2.1}}
\label{subsec:proof-abstract}

The proof is composed of three steps.

\begin{itemize}[wide=0pt]
\item \textit{Step 1: Existence and optimality of lifts.} We know, by assumption, that there exists a Monge map $t_B$ that is optimal between the pushforward measures $\varphi\push\mu$ and $\psi \push\nu$.

As our goal is to build a Monge map between the initial measures $\mu$ and $\nu$, we first show that (i) there exists a transport plan $\pi \in \Pi(\mu,\nu)$ such that $(\varphi,\psi)\push\pi = (\id, t_B)\push\mu$ and that (ii) any such $\pi$ is an optimal transport plan between $\mu$ and $\nu$ for the cost $c$.
This is formalized by the following lemmas.

\begin{lemma}[Existence of a lift]
    \label{lemma:pullback-manifold}
    For any transport plan $\tilde\pi \in \Pi(\varphi\push\mu,\psi\push\nu)$, there exists a transport plan $\pi \in \Pi(\mu,\nu)$ such that $(\varphi,\psi)\push\pi=\tilde\pi$.
\end{lemma}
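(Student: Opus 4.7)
The plan is to use a disintegration-and-gluing argument, which is the classical way to lift a coupling through a pair of projections.

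First, I would disintegrate both measures along their respective projections: let $(\mu_u)_{u \in B_+}$ be the disintegration of $\mu$ with respect to $\varphi$ (so $\mu_u$ is concentrated on $\varphi^{-1}(u)$ and $\mu = \int \mu_u \dd(\varphi\push\mu)(u)$), and similarly let $(\nu_v)_{v \in B_-}$ be the disintegration of $\nu$ with respect to $\psi$. These exist under the standing Polish/measurable-space assumptions recalled in the paper's appendix on disintegration.

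Next I would define the candidate lift by the formula
\[
\pi \defeq \int_{B_+ \times B_-} (\mu_u \otimes \nu_v) \dd \tilde\pi(u,v),
\]
interpreted as a measure on $\Xx \times \Yy$. More precisely, for any measurable $A \subset \Xx \times \Yy$ we set
\[
\pi(A) = \int_{B_+\times B_-} \Big( \int_{\Xx\times\Yy} \one_A(x,y) \dd\mu_u(x) \dd\nu_v(y) \Big) \dd \tilde\pi(u,v).
\]
Measurability of the integrand in $(u,v)$ is exactly the content of the disintegration theorem (the maps $u \mapsto \mu_u$ and $v \mapsto \nu_v$ are measurable families of probability measures), so $\pi$ is a well-defined finite measure, in fact a probability measure since $\tilde\pi$ is.

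It then remains to verify three things. \emph{(i)} The first marginal of $\pi$ is $\mu$: for $A \subset \Xx$ measurable,
\[
\pi(A \times \Yy) = \int \mu_u(A) \nu_v(\Yy) \dd\tilde\pi(u,v) = \int \mu_u(A) \dd(\varphi\push\mu)(u) = \mu(A),
\]
where I use that $\tilde\pi$ has first marginal $\varphi\push\mu$ and that $(\mu_u)$ disintegrates $\mu$. The second marginal equals $\nu$ by the symmetric computation. \emph{(ii)} The projection property: for any measurable $S \subset B_+\times B_-$,
\[
(\varphi,\psi)\push \pi (S) = \int \one_S(\varphi(x),\psi(y)) \dd\mu_u(x)\dd\nu_v(y) \dd\tilde\pi(u,v) = \int \one_S(u,v) \dd\tilde\pi(u,v) = \tilde\pi(S),
\]
since $\mu_u$ (resp.\ $\nu_v$) is concentrated on the fiber $\varphi^{-1}(u)$ (resp.\ $\psi^{-1}(v)$), so $\varphi(x)=u$ $\mu_u$-a.e.\ and $\psi(y)=v$ $\nu_v$-a.e. \emph{(iii)} Nothing else is needed.

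The only non-routine point is ensuring that the disintegrations exist and produce jointly measurable families, but this is exactly the standard disintegration theorem for probability measures on standard Borel (or Polish) spaces, which applies here thanks to the measurable-embedding assumptions $\Phi_\Xx,\Phi_\Yy$ into $\RR^d$ made in \Cref{theo:fibers-nonsense}. No structural assumption (atomlessness, absolute continuity, etc.) on the fibers is needed for the \emph{existence} of the lift $\pi$; those assumptions only come in at the next step when one wants $\pi$ to be induced by a map.
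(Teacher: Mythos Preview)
Your proof is correct and follows exactly the same approach as the paper: define $\pi = \int_{B_+\times B_-}(\mu_u\otimes\nu_v)\dd\tilde\pi(u,v)$ using the disintegrations of $\mu$ and $\nu$, then check the marginals and the projection identity. Your write-up is actually more detailed than the paper's, which only sketches the two verifications.
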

\begin{proof}
Let $(\mu_u)_{u\in B_{\Xx}}$ and $(\nu_v)_{v\in B_{\Yy}}$ be disintegrations of $\mu$ and $\nu$ by $\varphi$ and $\psi$, respectively.
Given $\tilde\pi \in \Pi(\varphi\push\mu,\psi\push\nu)$, we define
\begin{equation*}
\pi\defeq \iint _{ B_{\Xx}\times B_{\Yy}}(\mu_{u}\otimes \nu_{v})\dd \tilde\pi(u,v),
\end{equation*}
which belongs to $\Pi(\mu,\nu)$ since $\tilde{\pi}$ has $\phi\push\mu$ and $\psi\push\nu$ as marginals and, by disintegration, $\int_{B_{\Xx}} \mu_u \dd (\varphi\push\mu) = \mu$ (and same for $\nu$). 
The relation $(\varphi,\psi)\push\mu = \tilde{\pi}$ then follows from the fact that by disintegration $\mu_u$ is exactly supported on $\varphi^{-1}(u)$ so one has $\mu_u(\varphi^{-1}(U)) = \delta_U(u)$ for any measurable $U \subset B_{\Xx}$ (and same for $\nu_v$). 
\end{proof}

\begin{lemma}
    \label{lemma:decomp-manifold}
    Let $c: \Xx \times \Yy \to \RR$ and $\tilde c: B_{\Xx} \times B_{\Yy}\to \RR$ such that $$c(x,y)=\tilde c(\varphi(x),\psi(y))\quad \text{ for all } (x,y)\in \Xx \times \Yy.$$
    Then optimal plans for the base space cost $\tilde c$ are the projections of optimal plans for $c$: writing $\Pi\opt_{c}(\mu,\nu)$ the set of optimal transport plan between $\mu$ and $\nu$ for the cost $c$, and similarly for $\Pi\opt_{\tilde c}(\varphi\push\mu,\psi\push\nu)$, one has 
    \begin{equation*}
    \Pi\opt_{\tilde c}(\varphi\push\mu,\psi\push\nu)=(\varphi,\psi)\push\Pi\opt_{c}(\mu,\nu).
    \end{equation*}
\end{lemma}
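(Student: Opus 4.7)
The plan is to establish the equality of the two sets by first showing that the transport cost is invariant under the projection by $(\varphi,\psi)$, then combining this with the lift from \cref{lemma:pullback-manifold} to show that the two optimization problems have the same infimum, and finally deducing both inclusions.

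First, I would observe the key cost-preservation identity: for any $\pi \in \Pi(\mu,\nu)$, setting $\tilde\pi \defeq (\varphi,\psi)\push\pi \in \Pi(\varphi\push\mu,\psi\push\nu)$, the hypothesis $c = \tilde c \circ (\varphi,\psi)$ gives immediately, by the change-of-variables formula for push-forwards,
\[
  \int_{\Xx\times\Yy} c(x,y) \dd\pi(x,y) = \int_{\Xx\times\Yy} \tilde c(\varphi(x),\psi(y)) \dd\pi(x,y) = \int_{B_+\times B_-} \tilde c(u,v) \dd\tilde\pi(u,v)\,.
\]
Conversely, given $\tilde\pi \in \Pi(\varphi\push\mu,\psi\push\nu)$, \cref{lemma:pullback-manifold} produces some $\pi \in \Pi(\mu,\nu)$ with $(\varphi,\psi)\push\pi = \tilde\pi$, and the same identity shows that this $\pi$ has the same cost as $\tilde\pi$.

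From this, the two infimums coincide:
\[
  \inf_{\pi\in\Pi(\mu,\nu)} \int c \dd\pi \;=\; \inf_{\tilde\pi\in\Pi(\varphi\push\mu,\psi\push\nu)} \int \tilde c \dd\tilde\pi\,,
\]
each infimum dominating the other by the two constructions above. Call this common value $I\opt$.

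For the inclusion $(\varphi,\psi)\push\Pi\opt_c(\mu,\nu) \subseteq \Pi\opt_{\tilde c}(\varphi\push\mu,\psi\push\nu)$, take $\pi \in \Pi\opt_c(\mu,\nu)$; then $(\varphi,\psi)\push\pi$ has marginals $\varphi\push\mu$ and $\psi\push\nu$ and cost $I\opt$, hence is optimal downstairs. For the reverse inclusion, take $\tilde\pi \in \Pi\opt_{\tilde c}(\varphi\push\mu,\psi\push\nu)$ and lift it via \cref{lemma:pullback-manifold} to a plan $\pi \in \Pi(\mu,\nu)$ with $(\varphi,\psi)\push\pi = \tilde\pi$; by cost preservation, $\pi$ realises $I\opt$ and is therefore an element of $\Pi\opt_c(\mu,\nu)$ whose projection is $\tilde\pi$.

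There is no serious obstacle here: the entire argument is a bookkeeping exercise once the cost identity and the lift lemma are in hand. The only mild point worth mentioning is to invoke \cref{lemma:pullback-manifold} in both directions---once to transfer the infimum upstairs-to-downstairs and once to realise every optimal downstairs plan as a projection.
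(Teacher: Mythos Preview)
Your proof is correct and follows essentially the same approach as the paper: both rely on the cost-preservation identity $\int c\,\dd\pi = \int \tilde c\,\dd((\varphi,\psi)\push\pi)$ together with the lift from \cref{lemma:pullback-manifold} to establish the two inclusions. The only cosmetic difference is that you first isolate the common optimal value $I\opt$ before concluding, whereas the paper proves each inclusion directly via comparison inequalities; the underlying argument is identical.
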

\begin{proof}
Let us first remark that the relation between $c$ and $\tilde{c}$ implies that for any $\pi \in \Pi(\mu,\nu)$ and $\tilde\pi=(\varphi,\psi)\push\pi$, one has $\langle c,\pi\rangle = \langle \tilde c,\tilde\pi\rangle$.
Now, let $\tilde\pi\opt \in \Pi \opt _{\tilde c}(\varphi\push\mu,\psi\push\nu)$;
by \Cref{lemma:pullback-manifold}, there exists a $\pi \in \Pi(\mu,\nu)$ such that $(\varphi,\psi)\push\pi=\tilde\pi\opt$. 
Then for any $\gamma \in \Pi(\mu,\nu)$, one has $\langle c,\pi\rangle  =  \langle \tilde c,\tilde\pi\opt \rangle  \leq  \langle \tilde c,(\varphi,\psi)\push\gamma\rangle  =  \langle c,\gamma\rangle$,
hence the optimality of $\pi$.
Conversely, let $\pi \opt \in \Pi \opt _{c}(\mu,\nu)$;
by \Cref{lemma:pullback-manifold}, for any $\tilde\gamma  \in \Pi  (\varphi\push\mu,\psi\push\nu)$ there exists a $\gamma \in \Pi(\mu,\nu)$ such that $(\varphi,\psi)\push\gamma=\tilde\gamma$. 
We then have $\langle \tilde c,(\varphi,\psi)\push\pi \opt \rangle  = \langle c,\pi \opt \rangle  \leq  \langle c,\gamma\rangle  =  \langle \tilde c,\tilde\gamma\rangle$, 
hence the optimality of $(\varphi,\psi)\push\pi \opt$.
\end{proof}
\ 
\item \textit{Step 2: Existence of Monge maps between the fibers.} Using \Cref{lemma:pullback-manifold} with $\tilde{\pi} = (\id,t_B)\push(\varphi\push\mu)$, we know that we can build an optimal transportation plan $\pi \in \Pi(\mu,\nu)$ that essentially coincides with $t_B$ on $B_{\Xx} \times B_{\Yy}$ and transports each fiber $\mu_u$ onto $\nu_{t_B(u)}$ for $\mu$-a.e.~$u \in B_{\Xx}$.
To build a Monge map between $\mu$ and $\nu$, we must show that one can actually transport almost every $\mu_u$ onto $\nu_{t_B(u)}$ using a map rather than a plan.
For this, we use the following result by \citeauthor{santambrogio2015optimal}~\cite[Remark 1.23, Lemma 1.28, Corollary 1.29]{santambrogio2015optimal}.

\begin{proposition}
    \label{prop:fact-abstract}
    Let $\alpha,\beta$ be two measures supported on $\RR^d$ and suppose $\alpha$ atomless. Then:
    \begin{enumerate}[(i),nolistsep,leftmargin=*]
        \item if $d=1$, there exists a transport map $\tilde{T}$ that pushes $\alpha$ onto $\beta$. Furthermore, it is the \emph{unique} optimal map between these measures for the squared distance cost $(x,y) \mapsto \|x - y\|^2$;
        \item \label{item:sigma_d} there exists a map $\sigma_d : \RR^d \to \RR$ (that does not depend on $\alpha,\beta$) that is (Borel) measurable, injective, and its inverse is measurable as well.
    \end{enumerate}
\end{proposition}

As we assumed that the ground spaces $\Xx$ and $\Yy$ can be embedded in $\RR^d$ using the injective, measurable maps $\Phi_\Xx$ and $\Phi_\Yy$, we can apply \Cref{prop:fact-abstract} using $\sigma_\Xx = \sigma_d \circ \Phi_\Xx$ and $\sigma_\Yy = \sigma_d \circ \Phi_\Yy$.
As $\sigma_\Xx$ is injective, $\sigma_{\Xx\pushonly} \mu_u$ is atomless on $\RR$, and we can thus consider the \emph{unique} Monge map $\tilde{T}_u$ between $\sigma_{\Xx\pushonly}\mu_u$ and $\sigma_{\Yy\pushonly} \nu_{t_B(u)}$ for the squared distance cost on $\RR$.

From this, as the maps $\sigma_\Xx$ and $\sigma_\Yy$ are measurable and injective (thus invertible on their image), we can define $T_u = \sigma_\Yy^{-1} \circ \tilde{T}_u \circ \sigma_\Xx : \Xx \to \Yy$, which is a (measurable) transport map between $\mu_u$ and $\nu_{t_B(u)}$.\\

\item \textit{Step 3: Building a measurable global map.} Now that we have maps $(T_u)_u$ between each $\mu_u$ and $\nu_{t_B(u)}$, it may be tempting to simply define a given map $T : \Xx \to \Yy$ by $T(x) = T_{\varphi(x)}(x)$ whenever $\mu_{\varphi(x)}$ is atomless (which, by assumption, holds $\mu$-a.e.).
Intuitively, the map $T$ induces a transport plan $(\id, T)\push\mu$ satisfying $(\varphi,\psi)\push(\id,T)\push\mu = (\id, t_B)\push(\varphi \push \mu)$ on $B_{\Xx} \times B_{\Yy}$ and thus must be optimal according to \Cref{lemma:decomp-manifold}.
A crucial point, though, is to prove that this map $T$ can be defined in a measurable way.
For this, we use the following \emph{measurable selection theorem} due to \citeauthor{fontbona2010measurability}~\cite{fontbona2010measurability}.
\begin{proposition}[Measurable selection of maps, {\cite[Theorem~1.1]{fontbona2010measurability}}]
    \label{prop:fontbonaRd}
    Let $(B,\Sigma,m)$ be a $\sigma$-finite measure space and consider a measurable function
    $B \ni u \mapsto (\mu_u,\nu_u) \in \Pp(\RR^d)^2$.
    Let $c : \RR^d \times \RR^d \to \RR$ and
    assume that for $m$-a.e.~$u \in B$,
    there is a unique Monge map $T_u$ between $\mu_u$ and $\nu_u$ for the cost $c$.
    Then there exists a measurable function $(u,x) \mapsto T(u,x)$ such that for $m$-almost every $u$, $T(u,x) = T_u(x)$, $\mu_u$-a.e.
\end{proposition}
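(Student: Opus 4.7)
The plan is to produce the jointly measurable function $T$ in two stages. First, I will construct a measurable selection $u\mapsto \pi_u\in \Pi^\star_c(\mu_u,\nu_u)$ of optimal transport plans, using a classical measurable-selection theorem applied to the set-valued map of optimizers. Second, I will exploit the uniqueness hypothesis on the Monge map to assemble the $\pi_u$ into a joint measure whose disintegration yields the desired map $T(u,x)$.

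For the first stage, equip $\Pp(\RR^d)$ and $\Pp(\RR^d\times\RR^d)$ with the weak topology, which makes them Polish spaces. The map $u\mapsto (\mu_u,\nu_u)$ is measurable by hypothesis, and standard arguments show that the set-valued map $\Gamma(u)\defeq \Pi(\mu_u,\nu_u)$ takes nonempty weakly compact values and has measurable graph in $B\times \Pp(\RR^d\times\RR^d)$. Since $c$ is lower semicontinuous (and bounded below, modulo integrability), $\pi\mapsto \int c\,\dd\pi$ is lower semicontinuous on $\Gamma(u)$, so the subset $\Gamma^\star(u)\defeq \Pi^\star_c(\mu_u,\nu_u)$ of minimizers is nonempty and closed, with measurable graph. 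The Kuratowski-Ryll-Nardzewski selection theorem then furnishes a measurable map $u\mapsto \pi_u\in \Gamma^\star(u)$.

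For the second stage, the Monge/uniqueness hypothesis implies that for $m$-a.e.~$u$, $\Gamma^\star(u)=\{(\id,T_u)\push\mu_u\}$, so our selection must satisfy $\pi_u=(\id,T_u)\push\mu_u$ for such $u$. Define the measure
\[
    \PP\defeq \int_B \delta_u\otimes \pi_u\,\dd m(u)
\]
on $B\times \RR^d\times\RR^d$; its measurability is ensured by Stage 1, and its marginal on $B\times \RR^d$ equals $\bar m\defeq \int_B \delta_u\otimes \mu_u\,\dd m(u)$. Disintegrating $\PP$ with respect to the projection $(u,x,y)\mapsto (u,x)$ yields conditional probabilities $(\rho_{(u,x)})_{(u,x)}$ on $\RR^d$ that are $\bar m$-a.e.~Dirac masses, namely $\rho_{(u,x)}=\delta_{T_u(x)}$. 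Setting
\[
    T(u,x)\defeq \int_{\RR^d} y\,\dd\rho_{(u,x)}(y)
\]
produces a jointly measurable function $T:B\times \RR^d\to\RR^d$ with $T(u,x)=T_u(x)$ for $\bar m$-a.e.~$(u,x)$, which is exactly the conclusion sought.

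The main obstacle lies in Stage 1: one must rigorously verify that the graph of $\Gamma^\star$ is measurable, which hinges on joint measurability of $(u,\pi)\mapsto \int c\,\dd\pi$ and on the correspondence $u\mapsto \Pi(\mu_u,\nu_u)$ being measurable when $u\mapsto (\mu_u,\nu_u)$ is only assumed measurable (not continuous). This typically requires either continuity and boundedness of $c$ or an approximation argument truncating $c$. A secondary subtlety is managing null sets: the $m$-negligible set where uniqueness fails, together with the $\bar m$-negligible set where the conditional disintegration is not a Dirac, must be absorbed so that $T$ can be defined arbitrarily (e.g.~as $0$) thereon without violating the conclusion. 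The use of disintegration in Stage 2 is standard once $\sigma$-finiteness of $m$ and the Polish structure are in place.
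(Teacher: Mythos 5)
Your proposal is correct in outline but takes a genuinely different route from the paper. The paper does not reprove this Euclidean statement (it is imported from \cite{fontbona2010measurability}); its own argument, carried out in the appendix for the manifold analogue \cref{prop:selection-manifold}, constructs $T$ directly: partition the target into cells $A_{n,k}$ of diameter at most $2^{-k}$, set $T^{(k)}(u,x)=a_{n,k}$ whenever $T_u(x)\in A_{n,k}$, prove measurability of the sets $\{(u,x)\mid T_u(x)\in A_{n,k}\}$ by set-valued analysis (stability of optimal plans under weak limits, measurability of the support map, intersections of measurable closed-valued maps, cf.\ \cref{lemma:ferme-mesurable,prop:Bnk-measurable}), and pass to the limit of the Cauchy sequence $(T^{(k)})_k$ in an $L^1$-type metric. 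You instead select a measurable family $u\mapsto\pi_u$ of optimal plans and recover $T$ by disintegrating $\int_B\delta_u\otimes\pi_u\,\dd m(u)$ over the $(u,x)$-variables, the conditionals being Diracs. This buys conceptual economy and generality (no partitions or Cauchy estimates; the scheme works verbatim whenever the optimal plan is unique and graph-induced), at the price of heavier black boxes: Kuratowski--Ryll-Nardzewski/Aumann selection (which needs either a verified weak measurability of the argmin correspondence or completion of $(B,\Sigma,m)$ -- harmless here since the conclusion is only a.e.), joint measurability of $(u,\pi)\mapsto\int c\,\dd\pi$ for a merely lower semicontinuous, possibly unbounded $c$, and regular conditional probabilities for the glued ($\sigma$-finite) measure. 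Two points deserve to be nailed down: your identification $\Gamma\opt(u)=\{(\id,T_u)\push\mu_u\}$ uses uniqueness of the optimal \emph{plan}, not merely uniqueness among maps -- this is indeed the intended reading of the hypothesis, and the paper's proof relies on it as well (its map $S_2$ is the support of the unique optimal plan); and the step $\rho_{(u,x)}=\delta_{T_u(x)}$ a.e.\ requires the small Fubini-type argument that, for $m$-a.e.\ $u$, the family $x\mapsto\rho_{(u,x)}$ is a disintegration of $\pi_u$, after which a.e.-uniqueness of disintegrations applies. Both are standard, and the measurability work you defer (that the optimal-plan correspondence varies measurably with $u$) is essentially the same fact the paper proves in \cref{lemma:ferme-mesurable}, so the two arguments ultimately rest on the same core ingredient.
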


We can apply \cref{prop:fontbonaRd} in the case $d=1$ to the family of measures $(\sigma_{\Xx\pushonly} \mu_u, \sigma_{\Yy\pushonly} \nu_{t_B(u)})_{u \in B_{\Xx}}$, where the reference measure on $B_{\Xx}$ is $\varphi\push\mu$\footnote{Note that we cannot apply \cref{prop:fontbonaRd} to the measures $(\mu_u,\nu_{t_B(u)})_u$ and the maps $(T_u)_u$ directly, as $T_u$ may not be the unique Monge map between the measures, a required assumption of the proposition.}.
This family is indeed measurable: by definition of the measure disintegration, the map $v \in B_{\Yy} \mapsto \nu_{v}$ is measurable; and as the Monge map $t_B$ is measurable as well, so is the map $B_{\Xx} \ni u \mapsto \sigma_{\Yy\pushonly} v_{t_B(u)}$ as a composition of measurable maps, and thus so is the map $u \mapsto (\mu_u, \nu_{t_B(u)})$.
\cref{prop:fontbonaRd} therefore guarantees the existence of a measurable map $\smash{\tilde{T}} : B_{\Xx} \times \RR \to \RR$ such that $\smash{\tilde{T}}(u,x) = \smash{\tilde{T}}_u(x)$ for $\varphi\push\mu$-almost every $u$ and $\sigma_{\Xx\pushonly}\mu$-almost every $x$. Now, we can define
\begin{align*}
    T : \Xx \to \Yy, \qquad
         x \mapsto \sigma_\Yy^{-1} \circ \tilde{T}(\varphi(x), \sigma_\Xx(x)).
\end{align*}
This map is measurable as it is a composition of measurable maps.
Let us prove that it is a transport map between $\mu$ and $\nu$.
For any continuous function $\zeta : \Yy \to \RR$ with compact support, we can write
\begin{equation*}
    \int_\Yy \zeta \dd T\push\mu = \int_\Xx \zeta\circ T\dd \mu
    = \int_{u \in B_{\Xx}} \int_{x \in \varphi^{-1}(u)} \big(\zeta\circ \sigma_\Yy^{-1}\circ \tilde{T}_u\circ \sigma_\Xx\big)(x) \dd \mu_u(x) \dd \varphi \push \mu(u),
\end{equation*}
where we use the disintegration of $\mu$ with respect to $\varphi$ and the fact that the $\mu_u$ are supported on $\varphi^{-1}(u)$, allowing us to write $\tilde{T}(\varphi(x),\sigma_\Xx(x)) = \tilde{T}_u(\sigma_\Xx(x))$ on that fiber ($\varphi\push\mu$-a.e.).
Now, recall that $T_u =\sigma_\Yy^{-1}\circ \tilde{T}_u \circ \sigma_\Xx$ defines a transport map between $\mu_u$ and $\nu_{t_B(u)}$. In particular, the image of the fiber $\varphi^{-1}(u)$ by this map is $\psi^{-1}(t_B(u)) \subset \Yy$.
Since $\nu_{t_B(u)}$ is supported on $\psi^{-1}(t_B(u))$, we get
\begin{align*}
    \int_\Yy \zeta \dd T\push \mu 
    = \int_{u \in B_{\Xx}} \int_{y \in \Yy} \zeta(y) \dd \nu_{t_B(u)}(y) \dd \varphi\push\mu(u)= \int_{v \in B_{\Yy}} \int_{y \in \Yy} \zeta(y) \dd \nu_{v}(y) \dd \psi\push\nu(v),
\end{align*}
where we use the fact that $t_B$ pushes $\varphi\push\mu$ to $\psi\push\nu$; by disintegration of $\nu$ with respect to $\psi$, we then have $T \push \mu = \nu$.

By \cref{lemma:decomp-manifold}, this map $T$ is optimal if and only if it satisfies $(\varphi,\psi)\push(\id,T)\push \mu = (\id, t_B)\push(\varphi\push\mu)$.
To show this, let $\zeta : \Xx \times \Yy \to \RR$ be a continuous function with compact support. We have
\begin{align*}
    \int_{B_{\Xx} \times B_{\Yy}} \zeta(u,v)& \dd (\varphi,\psi)\push(\id,T)\push \mu(u,v)
    = \int_\Xx \zeta\big(\varphi(x),\psi\circ T(x)\big) \dd \mu(x) \\
    &= \int_{u \in B_{\Xx}} \int_{x \in \varphi^{-1}(u)} \zeta\big(u, (\psi\circ\sigma_\Yy^{-1}\circ \tilde{T}_u \circ \sigma_\Xx (x)\big) \dd \mu_u(x) \dd \varphi \push \mu(u) \\
    &= \int_{u \in B_{\Xx}} \int_{y \in \psi^{-1}(t_B(u))} \zeta\big(u, t_B(u)\big) \dd \nu_{t_B(u)}(y) \dd \varphi \push \mu(u) \\
    &= \int_{u \in B_{\Xx}} \zeta(u, t_B(u)) \dd \varphi\push\mu(u) \\
    &= \int_{B_{\Xx} \times B_{\Yy}} \zeta(u,v) \dd (\id,t_B)\push \varphi \push\mu(u,v),
\end{align*}
proving the required equality and thus that $T$ is a Monge map between $\mu$ and $\nu$.
\end{itemize}

\subsection{Proof of \texorpdfstring{\Cref{theo:fibers-main}}{Theorem 2.4}}
\label{subsec:proof-fibers-main}

We recall that we note $\mu' \defeq \Phi\push\mu$ and $\nu' \defeq \Phi\push\nu$.
We also denote by $B$ the image of $\varphi = p_B \circ \Phi$, so that $\mu',\nu'$ are supported on $B \times F \subset B_0 \times F$.

\begin{itemize}[wide=0pt]
    \item \textit{Step 1: Construction of the structured Monge map.} Given that $\varphi\push\mu\ll\vol_{B_0}$ and that $B_0$ is a complete (separable) Riemannian manifold, by \cref{prop:twist} and \cref{rem:simple} there exists a unique optimal transport plan $\pi \opt_{ B }$ between $\varphi\push\mu$ and $\varphi \push \nu$ for the cost $\tilde c$ and it is induced by a map $t_{ B }: B_0 \to B_0$ of the form $t_{ B }=\tilde c\text{-}\exp_u(\nabla f)$, with $f$ being $\tilde c$-convex on $B_0$.
    
By \cref{lemma:decomp-manifold}, we know that any transport plan $\pi \in \Pi(\mu,\nu)$ satisfying $(\varphi,\varphi) \push \pi = (\id, t_B)\push\mu$ must be optimal.
Therefore, if $\pi$ happens to be induced by a map $T$, that is $\pi = (\id, T)\push\mu$, we will obtain a Monge map between $\mu$ and $\nu$.
To build such a $T$, we proceed as in \Cref{subsec:proof-abstract}: we define a Monge map $T_u$ between $\mu'_u$ and ${\nu'_{\smash{t_B(u)}}}$ for $\varphi\push\mu$-a.e.~$u$
and build a global map between $\mu'$ and $\nu'$ by (roughly) setting $T(u,x) = T_u(x)$.
As explained in \Cref{subsec:proof-abstract}, proving the measurability of such $T$ requires care.

\item \textit{Step 2: Transport between the fibers.} For $\varphi\push\mu$-a.e.~$u$, $\mu'_u\ll \vol_F$ and the optimal cost between $\mu'_u$ and $\nu'_{\smash{t_ B (u)}}$ is finite by assumption.
Whenever $\mu'_u$ has a density, we can therefore apply \cref{prop:quad-cost-manifold-villani} between $\mu'_u$ and $\nu'_{\smash{t_ B (u)}}$ with the cost $d_{ F }^2$ to establish the existence of a plan $\pi_u$ between these fibers that is induced by a map $T_u: F \to F$, which can be expressed as $T_u(v)=\exp_v(\nabla g_u(v))$ with $g_u$ being $d_ F ^2/2$-convex on $F$.

\item \textit{Step 3: Measurability of the global map.} Now that we have built structured maps $T_u$ between corresponding fibers (through $t_B$), it remains to prove the existence of a measurable map $T : B_0 \times F \to B_0 \times F$ transporting $\mu'$ onto $\nu'$ satisfying $T(u,x) = (t_B(u),T_u(x))$ for $\varphi\push\mu$-almost every $u$ and $\mu'_u$-almost every $x$.
For this, we need an adaptation of \cref{prop:fontbonaRd} to the manifold setting.
Namely, we have the following:

\begin{proposition}[Measurable selection of maps, manifold case]
    \label{prop:selection-manifold}
    Let $M$ be a complete Riemannian manifold and $(B, \Sigma, m)$ a measure space.
    Consider a measurable function $B\ni u \mapsto (\mu_u, \nu_u) \in \Pp(M)^2$. Assume that for $m$-almost every $u\in B$, $\mu_u\ll\vol_M$ and $\mu_u$ and $\nu_u$ have a finite transport cost. Let $T_u$ denote the (unique by \cref{prop:quad-cost-manifold-villani}) optimal transport map induced by the squared geodesic distance cost $d_M^2$ on $M$ between $\mu_u$ and $\nu_u$.
    Then there exists a measurable function $(u,x)\mapsto T(u,x)$ such that for $m$-almost every $u$, $T(u,x)=T_{u}(x)$, $\mu_{u}$-a.e.
\end{proposition}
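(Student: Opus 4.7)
The plan is to adapt the Euclidean argument of Fontbona et al.~(\cref{prop:fontbonaRd}) to the Riemannian setting by constructing a joint measure on $B\times M\times M$ that glues the optimal plans $\pi_u^\star\defeq(\id,T_u)\push\mu_u$ together, and then extracting the desired measurable map $T$ via a disintegration of this joint measure with respect to its first two coordinates.

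First, I would establish that $u\mapsto\pi_u^\star\in\Pp(M\times M)$ is measurable when the space of plans is equipped with the weak topology. Since $\mu_u\ll\operatorname{vol}_M$ and the transport cost between $\mu_u$ and $\nu_u$ is finite for $m$-a.e.~$u$, \cref{prop:quad-cost-manifold-villani} ensures that $\pi_u^\star$ is well-defined and unique. Combined with the standard stability of optimal transport in Polish spaces---the set-valued map $(\mu,\nu)\mapsto\Pi^\star_{d_M^2}(\mu,\nu)$ is upper semicontinuous, hence continuous wherever the optimizer is unique---and the measurability hypothesis on $u\mapsto(\mu_u,\nu_u)$, this yields the measurability of $u\mapsto \pi_u^\star$ as a composition of a measurable map with a map that is continuous at all relevant points.

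Once this is in hand, I would define a $\sigma$-finite Borel measure on $B\times M\times M$ by setting $\Pi(A\times E)\defeq\int_A\pi_u^\star(E)\dd m(u)$; its marginal on the first two coordinates is $\Lambda\defeq\int_B\delta_u\otimes\mu_u\dd m(u)$. Applying the disintegration theorem (available since $M$ is Polish), I obtain a $\Lambda$-measurable family $\{\Pi_{(u,x)}\}_{(u,x)\in B\times M}$ of probability measures on $M$. Because $\pi_u^\star=(\id,T_u)\push\mu_u$ encodes a transport map, for $m$-a.e.~$u$ and $\mu_u$-a.e.~$x$ the disintegration satisfies $\Pi_{(u,x)}=\delta_{T_u(x)}$. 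The map $T$ is then recovered by extracting this atom measurably: fixing a Borel isomorphism $\sigma:M\to \sigma(M)\subset\RR$ provided by Kuratowski's theorem (applicable since $M$ is Polish), one sets $T(u,x)\defeq\sigma^{-1}(F(u,x))$ where $F(u,x)$ is, for instance, the infimum of the support of $\sigma\push\Pi_{(u,x)}$ in $\RR$---which coincides with $\sigma(T_u(x))$ whenever $\Pi_{(u,x)}$ is a Dirac mass---with some arbitrary convention on the $\Lambda$-null set where the disintegration fails to be a Dirac.

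The main obstacle will be the first step: establishing measurability of $u\mapsto\pi_u^\star$. While uniqueness (from \cref{prop:quad-cost-manifold-villani}) and stability of OT in Polish spaces are classical, some care is needed to verify that the continuity of $(\mu,\nu)\mapsto\pi^\star$ at unique-optimizer points holds under the hypotheses of the statement, since compactness of supports is not assumed and a tightness argument localized to uniformly bounded transport costs may be required. Once this measurability is secured, the gluing, disintegration and selection steps are essentially routine.
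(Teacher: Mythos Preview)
Your approach is correct and genuinely different from the paper's. Both routes start from the same ingredient---measurability of the assignment $u\mapsto\Pi^\star(\mu_u,\nu_u)$, which the paper establishes via the closed-graph/stability argument of \cite[Thm.~5.20]{villani2009optimal} inside its Lemma~A.1---but they diverge immediately afterward. The paper never forms a joint measure or disintegrates: instead it works pointwise with the closed set $\graph(T_u)$, uses Rockafellar--Wets style set-valued measurability (adapted to manifolds, where the absence of Minkowski sums forces a diagonal-intersection trick) to show that $\{(u,x):T_u(x)\in A\}$ is measurable for every Borel $A$, then builds piecewise-constant approximations $T^{(k)}$ on a refining partition of $M$ and passes to an $L^1$ limit. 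Your route trades this explicit construction for abstract machinery: glue the plans into $\Pi$, disintegrate over $B\times M$ (which works because the fiber variable lives in the Polish space $M$, so regular conditional probabilities exist regardless of structure on $B$), and extract the Dirac location through a Borel isomorphism. Your argument is shorter and sidesteps the manifold-specific adaptation of \cite[Prop.~14.11]{rockafellar2009variational}; the paper's argument is more constructive and avoids invoking Kuratowski's theorem and conditional-probability existence.

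One point to tighten in your Step~1: the phrase ``continuous wherever the optimizer is unique, hence measurable by composition'' implicitly assumes the uniqueness locus is Borel in $\Pp(M)^2$, which is not obvious. The clean fix is to note that $(\mu,\nu)\mapsto\Pi^\star(\mu,\nu)$ is a measurable closed-valued map (exactly what the paper proves), apply a measurable selection theorem (Kuratowski--Ryll-Nardzewski) to obtain a Borel selector $s$, and set $\pi_u^\star\defeq s(\mu_u,\nu_u)$; on the full-$m$-measure set where the optimizer is unique this agrees with the actual optimal plan. With that adjustment your proof goes through.
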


This can be shown by adapting the proof of Theorem~1.1 in \cite{fontbona2010measurability} to the manifold setting, and most steps adapt seamlessly.
We provide a sketch of proof below.
A complete proof, where we stress the points that need specific care in adaptation, is deferred to \Cref{sec:appendix-proof-fontbona-manifold}.

\begin{proof}[Sketch of proof of \cref{prop:selection-manifold}]
The map $(u,x) \mapsto T(u,x)$ is defined as being the limit of a Cauchy sequence (on a complete metric space) of measurable maps $(u,x) \mapsto \smash{T^{(k)}}(u,x)$ which are built as $L^1$-approximations of the $T_u(x)$, yielding the relation $T(u,x) = T_u(x)$ for a.e.~$u,x$ when $k \to \infty$. 
More precisely, given a partition $(A_{nk})_n$ of $M$ made of cells of diameters smaller than $2^{-k}$, and given $a_{nk} \in A_{nk}$ any fixed point, $\smash{T^{(k)}}$ is precisely defined as $\smash{T^{(k)}}(u,x) = a_{nk}$, where $a_{nk}$ is the single point such that $T_u(x) \in A_{nk}$. 
Since on each set of the form $\{(u,x) \mid T_u(x) \in A_{nk} \}$, one has $d_M(\smash{T^{(k)}}(u,x), T_u(x)) \leq 2^{-k}$ (the diameter of the cell), it follows that $\smash{T^{(k)}}(u,x) \to T_u(x)$, at least for $m$-a.e.~$u$ and $\mu_u$-a.e.~$x$. 

The key point is to ensure the measurability of the map $\smash{T^{(k)}}$, which boils down to proving that the set $\{(u,x) \mid T_u(x) \in A_{nk}\}$ is measurable. 
This is possible by relying on the theory of measurable set-valued maps \cite[Chapters 5 and 14]{rockafellar2009variational}, that we carefully adapt from the Euclidean to the manifold setting in \cref{sec:measurable_set_valued}.
\end{proof}

We can apply this proposition with the manifold $M$ being the common fiber $F$ on which the $\mu'_u,\smash{\nu'_{t_ B (u)}}$ are supported for $\varphi\push\mu$-a.e.~$u$, and for which we have access to the (unique) Monge map $T_u$.
It gives the existence of a global map $t_F$ satisfying $t_F(u,v) = T_u(v)$ for $\varphi\push\mu$-a.e.~$u$, and $\mu'_u$-a.e.~$v$, and we can thus define the measurable map $T(u,x) = (t_B(u), t_F(u,x))$.

An elementary computation using the pushforward and disintegration definitions as well as the facts that $T_{u\pushonly}(\mu'_{u})=\nu'_{\smash{t_ B (u)}}$ and $t_{ B \pushonly}(\varphi\push\mu)=\varphi\push\nu$ then guarantees that
$T$ sends $\mu'$ to $\nu'$, and $T_E\defeq\Phi^{-1}\circ T\circ\Phi$ therefore sends $\mu$ to $\nu$. Since $(\varphi,\varphi)\push(\id,T_E)\push\mu=\pi_{ B }\opt$,
by \cref{lemma:decomp-manifold}, $T_E$ is an optimal map between $\mu$ and $\nu$.
\end{itemize}

\section{Applications to the inner product and squared distance GW problems}
\label{subsec:applications}

In this section, we apply the results of \Cref{subsec:general_thm} to our two problems of interest, \cref{eqn:GW-inner-prod} and \cref{eqn:GW-squared-distance}, to obtain the existence of Gromov--Monge maps for these GW problems in suitable settings. 
The proofs can be sketched in the following way: for both of these problems,
\begin{itemize}[nolistsep,wide,leftmargin=*]
    \item Let $\pi\opt$ be an optimal plan for the GW problem.
    \item Consider the associated linearized problem \cref{eq:linearized} at $\pi\opt$, which is a standard optimal transportation problem. From \Cref{subsec:intro_GW_and_OT}, we know that any solution of this OT problem ($\pi\opt$ being one of them) must be a solution of the initial GW problem.
    \item Observe that the linearized versions of \cref{eqn:GW-inner-prod} and \cref{eqn:GW-squared-distance} reduce to optimal transportation problems with costs satisfying the assumptions of \cref{theo:fibers-main}, thus guaranteeing the existence of structured Gromov--Monge maps.
\end{itemize}
As such, we only focus on detailing this last point in the following subsections. 

\begin{remark}[Non-uniqueness for GW]
Our results provide the existence of Gromov--Monge maps for the GW problems, but their construction depends on the linearization (that is, of the initial $\pi\opt$ selected). Thus, even though we would have uniqueness of solutions for the linearized problem as given e.g.~by the subtwist condition, this does not give uniqueness of the solution to the corresponding GW problem: the GW problem may have several optimizers, each leading to a different linearized problem for which solutions, unique or not, may differ. 
\end{remark}

\subsection{The inner product cost}
\label{subsec:applications_innerProduct}
    Expanding the integrand in \cref{eqn:GW-inner-prod} and using the fact that $\iint\langle x,x'\rangle^2 \dd\pi\dd\pi=\iint\langle x,x'\rangle^2 \dd\mu\dd\mu$ is constant (the same goes for the terms that depend on $\nu$), one gets the equivalent problem
    \begin{equation*}
        \min _{\pi \in \Pi(\mu,\nu)} \iint-\langle x, x'\rangle\langle y, y'\rangle \dd\pi(x, y) \dd\pi(x', y').
    \end{equation*}
    This problem is not invariant to translations but it is to the action of $O_n(\RR) \times O_d(\RR)$. Any optimal correspondence plan $\pi\opt$ is also an optimal transport plan for the linearized problem \cref{eq:linearized} with cost
    \begin{align*}
        C_{\pi\opt}(x,y)=-\int\langle x,x'\rangle\langle y, y'\rangle \dd\pi\opt(x', y')=-\Big\langle\int(y' x'^\top)x \dd\pi\opt(x', y'),y\Big\rangle\defeqinv -\langle M\opt x,y\rangle,
    \end{align*}
    where $M\opt\defeq\int y' x'^\top \dd\pi\opt(x', y')\in \RR^{d\times n}$. This linearized cost satisfies the twist condition if and only if $M\opt$ is of full rank, hence in this case $\pi\opt$ is the only solution of the linearized problem and it is induced by a map, and Theorem~4.2.3 from \cite{vayer2020contribution} gives a result on the structure of this map. We can actually generalize this to the case where $M\opt$ is arbitrary, as follows.
    \begin{theorem}[Existence of a Gromov--Monge map for GW with inner product cost]
        \label{theorem:inner-main}
        Let $n\geq d$ and $\mu, \nu\in \Pp(\RR^n)\times\Pp(\RR^d)$ two measures with compact supports. Suppose that $\mu\ll\Leb_n$. Then there exists an optimal correspondence map $T:\RR^n\to\RR^d$ for \cref{eqn:GW-inner-prod}, which can be written as
        \begin{equation}
            T=O_\Yy^\top \circ T_0\circ p_{\RR^d}\circ O_\Xx,
            \label{eq:T0-form-1}
        \end{equation}
        where $(O_\Xx,O_\Yy)\in O_n(\RR)\times O_d(\RR)$, $p_{\RR^d}: \RR^n \to \RR^d$ projects on the $d$ first coordinates, 
        and
        \begin{equation}
            T_0:\RR^d\to\RR^d,\qquad (x_1,\dots,x_d)\mapsto\big(\nabla f\circ\Si(x_1,\dots,x_h), \nabla g_{x_1,\dots,x_h}(x_{h+1},\dots,x_d)\big),
            \label{eq:T0-form-2}
        \end{equation}
        where $h\leq d$, $\Si\in \RR^{h\times h}$ is diagonal with positive entries, and $f: \RR^h \to \RR$ and all the $g_{x_1,\dots,x_h}: \RR^{d-h}\to\RR$ are convex functions.
    \end{theorem}
    \noindent To show this, we will need two simple lemmas that essentially state that \citeauthor{brenier1987decomposition}'s theorem on optimal \emph{transport} maps holds when the inner product is perturbed by homeomorphisms. We state them now and prove them in \cref{subsec:proof-scaled-brenier}.
        \begin{lemma}
            \label{lemma:reparam}
            Let $\mu,\nu\in \Pp(E)$ and let $\psi_1,\psi_2:E\to F$ be homeomorphisms. Let $\tilde c:F\times F\to\RR$ and consider the cost $c(x,y)= c( \psi_1(x),\psi_2(y))$. Then a map is optimal for the cost $c$ between $\mu$ and $\nu$ if and only if it is of the form $\psi_2^{-1}\circ T\circ\psi_1$ with $T:F\times F$ optimal for the cost $\tilde c$ between $\psi_{1\pushonly}\mu$ and $\psi_{2\pushonly}\nu$.
        \end{lemma}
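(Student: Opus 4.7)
The plan is to set up a cost-preserving bijection between transport plans via the pushforward by $(\psi_1,\psi_2)$, and then restrict this bijection to maps.

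First, I would observe that since $\psi_1$ and $\psi_2$ are homeomorphisms, the map $\Psi \defeq (\psi_1,\psi_2) : E \times E \to F \times F$ is a homeomorphism as well, with inverse $(\psi_1^{-1},\psi_2^{-1})$. The pushforward $\pi \mapsto \Psi\push\pi$ defines a bijection between $\Pi(\mu,\nu)$ and $\Pi(\psi_{1\pushonly}\mu,\psi_{2\pushonly}\nu)$: indeed, if $\pi \in \Pi(\mu,\nu)$, its first marginal is $\psi_{1\pushonly}\mu$ and its second is $\psi_{2\pushonly}\nu$, and conversely any $\tilde\pi \in \Pi(\psi_{1\pushonly}\mu,\psi_{2\pushonly}\nu)$ is the image of $\Psi^{-1}\push\tilde\pi \in \Pi(\mu,\nu)$. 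By the change of variables formula and the assumption $c(x,y) = \tilde c(\psi_1(x),\psi_2(y))$, one has
\[
\int_{E \times E} c(x,y)\dd\pi(x,y) = \int_{F \times F} \tilde c(u,v) \dd\Psi\push\pi(u,v),
\]
so this bijection preserves the transport cost. Therefore $\pi$ is optimal for $c$ between $\mu,\nu$ if and only if $\Psi\push\pi$ is optimal for $\tilde c$ between $\psi_{1\pushonly}\mu,\psi_{2\pushonly}\nu$.

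Next, I would restrict to Monge plans. If $S : E \to E$ satisfies $S\push\mu = \nu$, then $\Psi\push(\id,S)\push\mu = (\psi_1, \psi_2 \circ S)\push\mu = (\id, \psi_2 \circ S \circ \psi_1^{-1})\push(\psi_{1\pushonly}\mu)$, which is precisely the Monge plan on $F \times F$ induced by the map $T \defeq \psi_2 \circ S \circ \psi_1^{-1}$; and $T$ pushes $\psi_{1\pushonly}\mu$ onto $\psi_{2\pushonly}\nu$ since $S\push\mu = \nu$. Conversely, if $T : F \to F$ pushes $\psi_{1\pushonly}\mu$ onto $\psi_{2\pushonly}\nu$, then $S \defeq \psi_2^{-1} \circ T \circ \psi_1$ pushes $\mu$ onto $\nu$ and its graph plan pushes forward by $\Psi$ to the graph plan of $T$. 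Combined with the cost-preserving bijection from the previous paragraph, this yields the claimed equivalence: Monge maps optimal for $c$ are exactly those of the form $\psi_2^{-1}\circ T\circ \psi_1$ with $T$ a Monge map optimal for $\tilde c$ between $\psi_{1\pushonly}\mu$ and $\psi_{2\pushonly}\nu$.

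There is no real obstacle here; the lemma is essentially a change-of-variables statement and its proof is a direct verification. The only point worth care is that $\psi_1,\psi_2$ being homeomorphisms guarantees both that pushforward by $\Psi$ is a bijection on plans and that the map-level correspondence $S \leftrightarrow T = \psi_2 \circ S \circ \psi_1^{-1}$ is well defined in both directions.
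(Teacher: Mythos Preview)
Your proof is correct and follows essentially the same approach as the paper: both establish that pushforward by $(\psi_1,\psi_2)$ is a cost-preserving bijection between $\Pi(\mu,\nu)$ and $\Pi(\psi_{1\pushonly}\mu,\psi_{2\pushonly}\nu)$, and then verify that this bijection restricts to the correspondence $S \leftrightarrow \psi_2 \circ S \circ \psi_1^{-1}$ on Monge plans. Your write-up is perhaps a bit more explicit about checking both directions, but the argument is the same.
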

        \noindent The second lemma is a simple corollary of the first:
        \begin{lemma}
            \label{lemma:scaled-Brenier}
            Let $h\geq 1$ and $\mu,\nu\in \Pp(\RR^h)$ with compact supports and such that $\mu\ll\Leb_h$. Consider the cost $c(x, y)= -\langle \psi_1(x),\psi_2(y)\rangle$ where $\psi_1,\psi_2:\RR^h\to \RR^h$ are diffeomorphisms.
            Then there exists a unique optimal transport plan between $\mu$ and $\nu$ for the cost $c$, and it is induced by a map $t:\RR^h\to \RR^h$ of the form $t=\psi_2^{-1}\circ\nabla f\circ\psi_1$, with $f:\RR^h\to\RR$ convex.
        \end{lemma}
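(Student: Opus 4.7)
The plan is to apply \cref{lemma:reparam} to reduce the problem to a standard Brenier-type setting on the reparametrized measures. Writing the cost as $c(x,y)=\tilde c(\psi_1(x),\psi_2(y))$ with $\tilde c(u,v)=-\langle u,v\rangle$, \cref{lemma:reparam} identifies optimal maps between $\mu$ and $\nu$ for $c$ with optimal maps between $\tilde\mu\defeq\psi_{1\pushonly}\mu$ and $\tilde\nu\defeq\psi_{2\pushonly}\nu$ for $\tilde c$. Since $\psi_1$ is a diffeomorphism and $\mu\ll\Ll^h$, one has $\tilde\mu\ll\Ll^h$; moreover, as $\psi_1,\psi_2$ are continuous and $\mu,\nu$ have compact supports, so do $\tilde\mu,\tilde\nu$, and the optimal transport cost is finite.

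The key observation is then that for any $\pi\in\Pi(\tilde\mu,\tilde\nu)$,
\[
\int -\langle u,v\rangle \dd\pi(u,v) = \tfrac12\int |u-v|^2 \dd\pi(u,v) - \tfrac12\int|u|^2\dd\tilde\mu(u) - \tfrac12\int|v|^2\dd\tilde\nu(v),
\]
so that the two last terms depend only on the marginals. Hence the minimization problem for the cost $\tilde c$ is equivalent to the one for the quadratic cost $|u-v|^2$, and \cref{theorem:brenier} applies: there is a unique optimal transport plan between $\tilde\mu$ and $\tilde\nu$, and it is induced by a map $T=\nabla f$ with $f$ convex. Composing back via \cref{lemma:reparam} yields $t=\psi_2^{-1}\circ\nabla f\circ\psi_1$ as announced.

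The only subtlety, which is not directly addressed by \cref{lemma:reparam} (stated for maps), is uniqueness of the optimal transport \emph{plan} for $c$. This is however immediate: since $\psi_1,\psi_2$ are bijections, the pushforward $\pi\mapsto(\psi_1,\psi_2)\push\pi$ is a bijection $\Pi(\mu,\nu)\to\Pi(\tilde\mu,\tilde\nu)$ that preserves the transport cost, so uniqueness of the optimal plan for $\tilde c$ between $\tilde\mu$ and $\tilde\nu$ (granted by Brenier) transfers to uniqueness of the optimal plan for $c$ between $\mu$ and $\nu$.
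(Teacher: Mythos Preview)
Your proof is correct and follows essentially the same route as the paper: reduce via \cref{lemma:reparam} to the inner-product cost between $\psi_{1\pushonly}\mu$ and $\psi_{2\pushonly}\nu$, then invoke Brenier's theorem. You are simply more explicit than the paper in spelling out the equivalence between the costs $-\langle u,v\rangle$ and $|u-v|^2$ and in justifying uniqueness at the level of plans.
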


    \noindent We now prove \cref{theorem:inner-main}.
    \begin{proof}[Proof of \cref{theorem:inner-main}]
        Using a singular value decomposition, we write $M\opt=O_\Yy^\top \Si O_\Xx\in\RR^{d\times n}$ with $(O_\Xx,O_\Yy)\in O_n(\RR)\times O_d(\RR)$ orthogonal matrices and $\Si\in\RR^{d\times n}$ diagonal with non-negative entries. The cost $C_{\pi\opt}$ then becomes
        \begin{equation*}
        C_{\pi\opt}(x, y)=-\langle \smash{O_\Yy^\top}\Si O_\Xx x,y\rangle =-\langle \Si O_\Xx x, O_\Yy y\rangle.
        \end{equation*}
        Using \cref{lemma:reparam}, the problem becomes an optimal transportation problem between $\mu'\defeq O_{\Xx\pushonly}\mu$ and $\nu'\defeq O_{\Yy\pushonly}\nu$. Up to permutation of the rows and columns of $O_\Xx$ and $O_\Yy$, we can assume $\si_1 \geq \dots \geq \si_h>0$ with $h\defeq \rk(M\opt)\leq d$, and the problem therefore becomes $\min_{\tilde\pi} \langle c_\Si, \tilde\pi\rangle$ for $\tilde\pi \in \Pi(\mu', \nu')$, where $c_\Si(\tilde x,\tilde y)=-\sum_{i=1}^h \si_i \tilde x_i \tilde y_i\defeqinv \tilde c( p(\tilde x), p(\tilde y))$, $p$ being the orthogonal projection on $\RR^h$.
        We reduce to the case where both measures live in the same space by noting that since $c_\Si(\tilde x,\tilde y)=c_\Si(p_{\RR^d}(\tilde x), \tilde y)$ for all $\tilde x$ and $\tilde y$, any map $T_0$ optimal between $\mu''\defeq p_{\RR^d\pushonly}\mu'$ and $\nu'$ will induce a map $T=T_0\circ p_{\RR^d}$ optimal between $\mu'$ and $\nu'$
        (by \cref{lemma:decomp-manifold}).
        One can then induce an optimal map between $\mu$ and $\nu$ by composing with $O_\Xx$ and $O_\Yy^\top$ (\cref{lemma:reparam}), hence \Cref{eq:T0-form-1}.
    The existence of such a map $T_0$, optimal between $\mu''$ and $\nu'$  and satisfying \cref{eq:T0-form-2}, follows from the application of \cref{theo:fibers-main} for $E\defeq E_0\defeq \RR^d=\RR^h\times\RR^{d-h}\defeqinv B_0\times F$ and $\varphi\defeq p$. Indeed, $B_0$ and $F$ are complete Riemannian manifolds, the cost $\tilde c$ satisfies the twist condition on $B_0\times B_0$, $p\push\mu''\ll\Leb_h$, and $\mu''_u\ll\Leb_{d-h}$ as a conditional probability for a.e.~$u$.\\
    We then make $t_B$ explicit.
    One has that $c_\Si(x,y)=-\langle \tilde\Si x,y\rangle$, where $\tilde\Si=\diag({\si_i})_{1\leq i\leq h}$. As $p\push\mu''$ has a density, we can apply \cref{lemma:scaled-Brenier} stated above with $(\psi_1,\psi_2)=(\tilde\Si,\id)$ to obtain the existence of a unique optimal transport plan $\pi_B\opt$ between $p\push\mu''$ and $p\push\nu'$ for the cost $c_\Si$, which is induced by a map $t_B:B\to B$ of the form $t_B=\nabla f\circ\tilde\Si$, with $f$ convex.
    \end{proof}
    \begin{remark}[Recovering Theorem~4.2.3 from \cite{vayer2020contribution}]
            Applying \cref{theorem:inner-main} when $h=d$, the optimal correspondence map between $O_{\Xx\pushonly}\mu$ and $O_{\Yy\pushonly}\nu$ can be written as $T_0\circ p_{\RR^d}$ with $T_0=\nabla f\circ \tilde\Si$. The induced optimal correspondence map between $\mu$ and $\nu$ is then
                \begin{align*}
                    T=\smash{O_\Yy^\top}\circ (\nabla f\circ \tilde\Si\circ p_{\RR^d})\circ O_\Xx 
                    =\smash{O_\Yy^\top}\circ (\nabla f\circ \Si)\circ O_\Xx 
                    = \nabla( f\circ O_\Yy)\circ \smash{O_\Yy^\top}\circ \Si \circ O_\Xx = \nabla\tilde f\circ M\opt,
                \end{align*}
            where $\tilde f\defeq f\circ O_\Yy$ is convex.
        \end{remark}

\subsection{The squared distance cost}
\label{subsec:applications_quadratic}
        The \cref{eqn:GW-squared-distance} problem
        is invariant by translation of $\mu$ and $\nu$. With no loss of generality, we assume that both measures are centered. Expanding the integrand gives
        \begin{equation*}
        \big|\|x-x'\|^2-\|y-y'\|^2\big|^2=\|x-x'\|^4+\|y-y'\|^4-2\|x-x'\|^2\|y-y'\|^2,
        \end{equation*}
        and the first two terms only depend on $\mu$ and $\nu$, not on $\pi$. Expanding the remaining term yields nine terms. Two of them also lead to a constant contribution: $-\|x\|^2\|y'\|^2$ and $-\|x'\|^2\|y\|^2$. Four lead to vanishing integrals since $\mu$ and $\nu$ are centered: $2\|x\|^2 \langle y,y'\rangle$, $2\|x'\|^2 \langle y,y'\rangle$, $2\|y\|^2 \langle x,x'\rangle$ and $2\|y'\|^2 \langle x,x'\rangle$. The remaining three terms then yield the following equivalent problem:
        \begin{equation*}
            \min _{\pi \in \Pi(\mu,\nu)} \int-\|x\|^2\|y\|^2\dd\pi(x, y)+2\iint-\langle x, x'\rangle\langle y, y'\rangle \dd\pi(x, y) \dd\pi(x', y').
        \end{equation*}
        Any optimal correspondence plan $\pi\opt$ is also an optimal transport plan for the linearized problem \cref{eq:linearized} with cost
        \begin{align*}
            C_{\pi\opt}(x,y)=-\|x\|^2\|y\|^2-4\int\langle x, x'\rangle\langle y, y'\rangle \dd\pi\opt(x', y') \defeqinv-\|x\|^2\|y\|^2-4\langle M\opt x,y\rangle,
        \end{align*}
        where $M\opt\defeq\int y'x'^\top \dd\pi\opt(x', y')\in \RR^{d\times n}$.
        In the cases where the rank of $M\opt$ is $d$, this linearized cost satisfies both the subtwist and the $2$-twist conditions, yielding an optimal $2$-map that is also a map/anti-map, by compactness of the support of $\mu$ and $\nu$ and when $\mu$ has a density. Similarly, when the rank of $M\opt$ is $d-1$, the cost satisfies the $2$-twist condition, yielding an optimal $2$-map structure. In the case where $\rk M\opt\leq d-2$, none of the twist conditions hold and there is \textit{a priori} no obvious reason for the existence of an optimal correspondence map. Perhaps surprisingly, this property can actually be guaranteed.
        \begin{theorem}[Existence of a Gromov--Monge map or $2$-map for GW with squared distance cost]
        \label{theorem:quad-main}
            Let $n\geq d$ and $\mu, \nu\in \Pp(\RR^n)\times\Pp(\RR^d)$ two measures with compact supports. Suppose that $\mu\ll\Leb_n$. Let $\pi\opt$ be a solution of \cref{eqn:GW-squared-distance} and $M\opt\defeq\int y' x'^\top \dd\pi\opt(x', y')$. Then:
            \begin{enumerate}[(i),leftmargin=*]
                \item if $\rk M\opt=d$, there exists an optimal correspondence $2$-map, which is also a map/anti-map;
                \item if $\rk M\opt=d-1$, there exists an optimal correspondence $2$-map;
                \item if $\rk M\opt\leq d-2$, there exists an optimal correspondence map, which can be written as
                    \begin{equation*}
                        T:\RR^n\to\RR^n,\qquad T=O_\Yy^\top \circ T_0\circ O_\Xx,\footnote{We consider $\nu$ as a measure on $\RR^n$ with $d$-dimensional support.}
                    \end{equation*}
                    where $(O_\Xx,O_\Yy)\in O_n(\RR)^2$. Writing any $x\in\RR^n$ as $x=(x_H,x_\perp)\in\RR^h\times\RR^{n-h}$ and defining a function $\Phi$ akin to a partial polar change of coordinates
                    \begin{equation}
                        \label{eq:bigPhi}
                        \Phi: \RR^n\to (\RR^h\times \RR_{\geq 0})\times S^{n-h-1},\qquad x\mapsto \big(\smash{\,\underbrace{(x_H,\|x_\perp\|^2)}_{x_B}},\,\smash{\underbrace{x_\perp/\|x_\perp\|}_{x_F}}\,\big),
                    \end{equation}
                    one can write the map $T_0$ as
                    \begin{equation*}
                        T_0:\RR^n\to\RR^n,\qquad \Phi\circ T_0(x)=\big(\tilde c\text{-}\exp_{x_B}(\nabla f(x_B)), \exp_{x_F}(\nabla g_{x_B}(x_F))\big),
                    \end{equation*}
                    where explicit expressions for $\tilde c$ and $\tilde c$-$\exp$ are given in \cref{eq:ctilde} and \cref{eq:ctildeexp} respectively, $h=\rk M\opt\leq d-2$, $f: \RR^{h+1} \to \RR$ is $\tilde c$-convex and all the $g_{x_B}: S^{n-h-1} \to \RR$ are $d_{S^{n-h-1}}^2/2$-convex.
            \end{enumerate}
            Therefore, regardless of the rank of $M\opt$, there always exists an optimal correspondence $2$-map. 
        \end{theorem}

        \begin{proof}[Proof of (i)]
            We show that in the case where $M\opt$ is of full rank, both the subtwist and the $2$-twist conditions are satisfied. For the subtwist condition, remark that the equation
            \begin{equation*}
            0= \nabla_x C_{\pi\opt}(x,y_1)-\nabla_x C_{\pi\opt}(x,y_2)=-2\big(\|y_1\|^2-\|y_2\|^2\big)x-4(M\opt)^\top(y_1-y_2)
            \end{equation*}
            has no solution in $x$ if $\|y_1\|=\|y_2\|$ and a unique one otherwise. For the $2$-twist condition, remark that any $y\in\Yy$ satisfying $-\big(\|y\|^2-\|y_0\|^2\big)x_0-2 (M\opt)^\top(y-y_0)=0$ is fully determined by $r\defeq\|y\|^2-\|y_0\|^2$ as
            \begin{equation}
                \label{eq:y-determined}
                y=y_0-\frac12 r(M\opt)^{-1}x_0.
            \end{equation}
            Taking the norm in \cref{eq:y-determined} shows that $r$ is a solution of a polynomial of degree two, and therefore that there are always at most two such points $y$. Both subtwist and $2$-twist conditions are therefore satisfied. The uniqueness of optimal plans ensured by the subtwist condition then guarantees that the unique optimal transport plan for the linearized cost is induced both by a $2$-map and a map/anti-map.

            \noindent \textit{Proof of (ii).}
            We show that in the case where $\rk M\opt= d-1$, the $2$-twist condition is satisfied. Let $y\in\Yy$ such that $\|y\|^2 x_{0}+ (M\opt)^\top y=\|y_{0}\|^2 x_{0}+ (M\opt)^\top y_{0}\defeqinv v$. Up to singular value decomposition, suppose $M\opt$ rectangular diagonal in $\RR^{d\times n}$ with sorted singular values and write $\tilde \Si =\diag(\sigma_{1},\dots,\sigma_{h})\in\RR^{h\times h}$ with $h \defeq \rk(M\opt)$. Decompose each vector $z$ of $\RR^n$ or $\RR^d$ as $z=(z_H,z_\perp)$, where $z_H\in\RR^h$ and $z_\perp$ contains the remaining coordinates. The equation becomes: 
            \begin{equation*}
            \begin{cases}
            \|y\|^2 x_H+\tilde\Si y_H & \!\!\!\!=v_H \\
            \|y\|^2 x_{\perp} & \!\!\!\!=v_{\perp} .
            \end{cases}
            \end{equation*}
            This imposes $y$ to live in the intersection of a $(d-1)$-dimensional sphere and of a $(d-r)$-dimensional affine subspace of $\RR^d$. As $r=d-1$, the latter subspace is a line and $y$ therefore belongs to a set of at most $2$ points, proving that the $2$-twist condition is satisfied.

            \noindent \textit{Proof of (iii).}
            The case $\rk M\opt\leq d-2$ is a consequence of \cref{theo:fibers-main} and the proof is as follows.
            We consider the measure $\nu$ as a measure on $\RR^n$ with $d$-dimensional support.
            Similarly to the inner product case, using a singular value decomposition, the cost $C_{\pi\opt}$ becomes
            \begin{equation*}
            C_{\pi\opt}(x, y)=-\|x\|^2\|y\|^2-\langle O_\Yy^\top\Si O_\Xx x,y\rangle =-\|O_\Xx x\|^2\|O_\Yy y\|^2-\langle \Si O_\Xx x, O_\Yy y\rangle.
            \end{equation*}
            By \cref{lemma:reparam} the problem reduces to $\min_{\tilde\pi} \langle c_\Si, \tilde\pi\rangle$ for $\tilde\pi \in \Pi(O_{\Xx\pushonly}\mu, O_{\Yy\pushonly}\nu)$, where $c_\Si(x,y)\defeq -\|x\|^2\|y\|^2-\langle \Si x, y\rangle$.
            Up to permutation of the rows and columns of $O_\Xx$ and $O_\Yy$, we further assume $\si_1 \geq \dots \geq \si_h>0$. Writing any $z\in\RR^n$ as $z=(z_H,z_\perp)\in\RR^h\times\RR^{n-h}$,
            \begin{align}
            c_\Si(x,y) & =-\|x_H\|^2 \|y_H\|^2 -\|x_H\|^2 \|y_\perp\|^2 -\|x_\perp\|^2 \|y_H\|^2 -\|x_\perp\|^2 \|y_\perp\|^2 -\langle \tilde \Si x_H, y_H\rangle \nonumber \\
                 &= \tilde c(\phi(x),\phi(y)), 
            \end{align}
            where $\tilde\Sigma$ is the same as in the proof of (ii) above, $\phi: x\mapsto (x_H,\|x_\perp\|^2)$, and $\tilde c:(\RR^h\times\RR)\times (\RR^h\times\RR)\to\RR$ is defined as
            \begin{equation}
                \label{eq:ctilde}
                \tilde c\big((u,r),(v,s)\big)\mapsto -\|u\|^2 \|v\|^2 -s\|u\|^2 -r\|v\|^2 -rs-\langle \tilde \Si u, v\rangle.
            \end{equation}
            Indeed, the cost $c_\Si(x,y)$ depends on the values of $x_H$ and $y_H$, but merely on the squared norm of $x_\perp$ and $y_\perp$. A direct computation proves that $\tilde c$ satisfies the twist condition.\\
            Now, the same argument as in \cref{example:fiber-main} applies, but this time with $E_0 = \RR^h\times\RR^{n-h}$, $E = E_0\backslash (\RR^h\times\{0\})$, $B_0 = \RR^h\times\RR$ and $F = S^{n-h-1} = \{x \in E_0\mid \|x_\perp\|=1\}$. Using \cref{theo:fibers-main}, one obtains the existence of a Monge map $T_0:\RR^n\to\RR^n$ between $\mu$ and $\nu$ for the cost $c_\Sigma$. It has a structure akin to \Cref{eq:structureMongeMap} and decomposes into a map on the basis $B_0 = \RR^{h+1}$ and a map on each fiber $F = S^{n-h-1}$:
                \begin{equation*}
                    \Phi\circ T_0(x)=\Big(\tilde c\text{-}\exp_{(x_H,\,\|x_\perp\|^2)}\big(\nabla f(x_H,\|x_\perp\|^2)\big), \,\exp_{x_\perp/\|x_\perp\|}\big(\nabla g_{(x_H,\|x_\perp\|^2)}(x_\perp/\|x_\perp\|)\big)\Big),
                \end{equation*}
            where $\Phi$ is defined in \cref{eq:bigPhi}.
            Note that the $\tilde c$-exponential map is given in closed form by
            \begin{equation}
                \label{eq:ctildeexp}
                \tilde c\text{-}\exp_{(u,r)}(w,t)=\begin{pmatrix}\tilde \Sigma^{-1}(w-2tu)\\t - \|\tilde\Sigma^{-1}(w-2tu) \|^2 \end{pmatrix}\in\RR^h\times\RR,
            \end{equation}
            where $(u,r)\in\RR^h\times\RR$ is the base point and $(w,t)\in\RR^h\times\RR$ the tangent vector.
            Lastly, note that the case where $M\opt = 0$ has not been explicitly treated. In this case, the cost is simply $c(x,y)=-\|x\|^2\|y\|^2=\tilde c(\phi(x),\phi(y))$ and the strategy above directly applies.\qedhere
        \end{proof}

\subsection{Complementary study of GW with squared distance cost in the one-dimensional case}
\label{subsec:quadra1D}

   The \cref{eqn:GW-squared-distance} problem being invariant by translation, we assume that measures $\mu$ and $\nu$ below are centered. In the one-dimensional case $\Xx,\Yy\subset\RR$, the linearized GW problem \cref{eq:linearized} reads, with $\pi\opt$ an optimal correspondence plan:
\begin{equation}
    \label{eq:gw-1d-cont}
    \min_{\pi\in\Pi(\mu,\nu)} \int_{\Xx\times\Yy} (-x^2y^2-4mxy)\dd\pi(x,y),\quad\text{where }m=\int_{\Xx\times\Yy} x'y'\dd\pi\opt(x',y'),
\end{equation}
and for any plan $\pi\in\Pi(\mu,\nu)$ (not necessarily optimal), we denote by $m(\pi):=\int xy\dd\pi(x,y)$ the \emph{covariance} of $\pi$.
Let us denote the OT cost function associated with \cref{eq:gw-1d-cont} by $c_m(x,y)\defeq-x^2y^2-4mxy$, as it only depends on the real parameter $m$.
Let us apply \cref{theorem:quad-main}:
when $m\neq0$, there exists an optimal transport plan that is a $2$-map and a map/anti-map, and when $m=0$, there exists an optimal transport plan that is a $2$-map.
In the following sections, we study the tightness of this result, asking if there are cases where the optimal plan for this cost can be a map. In the one-dimensional case, the \emph{submodularity} property of the cost function, or Spence--Mirrlees condition, will prove useful. It guarantees the optimality of the \emph{non-decreasing rearrangement} $\pimon$, that match the quantiles of $\mu$ and $\nu$ with each other  \cite{carlier2012optimal,santambrogio2015optimal,peyre2019computational}. We also define the \emph{non-increasing rearrangement} $\piantimon$, that match the quantiles in reverse, and call these two plans the \emph{monotone rearrangements}.
\begin{definition}[Submodular cost]
    \label{def:submod}
    Let $\Xx,\Yy\subset\RR$ be two intervals. A function $c:\Xx\times\Yy\to\RR$ is said to be \emph{submodular} if for all $(x,y) \in \Xx\times\Yy$ and $\delta_1,\delta_2\geq0$ such that $(x + \delta_1,y + \delta_2) \in \Xx\times\Yy$,
\begin{equation*}
 c(x,y) + c(x + \delta_1,y+\delta_2) \leq c(x,y+\delta_2) + c(x + \delta_1,y).
\end{equation*}
If $c$ is twice differentiable, this global condition on the rectangle $\Xx \times \Yy$ is equivalent to the local condition
    \begin{equation}
        \tag{Submod}
        \text{for all } (x,y)\in\Xx\times\Yy,\quad \partial_{xy}c(x,y)\leq0.
        \label{eq:submod}
    \end{equation}
    \emph{Supermodularity} is defined with the reversed inequality.
\end{definition}
\noindent We now state the well-known consequence of submodularity on the structure of optimal plans \cite{carlier2012optimal,mccann2012glimpse,santambrogio2015optimal}.
\begin{proposition}[Optimality of monotone rearrangements]
\label{prop:submod}
Let $\mu,\nu \in\Pp(\Xx)\times\Pp(\Yy)$ of finite transport cost. If $c$ satisfies \cref{eq:submod}, then $\pimon$ is an optimal plan for \cref{eq:OT}, unique if the inequality is strict everywhere on $\Xx \times \Yy$.
Similarly, {supermodularity} induces the optimality of $\piantimon$.
\end{proposition}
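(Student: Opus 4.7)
The plan is to apply the classical $c$-cyclical monotonicity characterization of optimal plans and exploit the fact that in dimension one, submodularity forces monotonicity of the support.

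First I would recall the standard result that any optimizer $\pi\opt$ of \cref{eq:OT} has $c$-cyclically monotone support; in particular, for any two pairs $(x,y), (x',y') \in \supp(\pi\opt)$,
\begin{equation*}
c(x,y) + c(x',y') \leq c(x,y') + c(x',y).
\end{equation*}
Suppose $x < x'$. If also $y > y'$, then rewriting this inequality with $\delta_1 = x' - x \geq 0$ and $\delta_2 = y - y' \geq 0$ applied at the base point $(x,y')$ directly contradicts the strict version of \cref{eq:submod}. Hence under strict submodularity $y \leq y'$, meaning $\supp(\pi\opt)$ is monotone non-decreasing.

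Next, I would invoke the classical one-dimensional fact (see \cite[Sec.~2.2]{santambrogio2015optimal}) that there is a \emph{unique} plan in $\Pi(\mu,\nu)$ whose support is monotone non-decreasing, namely the quantile coupling $\pimon$. Combined with the preceding step this yields $\pi\opt = \pimon$, giving optimality and uniqueness simultaneously in the strict case.

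For the merely submodular (non-strict) case, where uniqueness is not claimed, I would verify directly that $\pimon$ itself has $c$-cyclically monotone support, which under the finite transport cost assumption suffices to conclude optimality. Given any finite family $(x_i, y_i)_{i=1}^n$ from $\supp(\pimon)$ reindexed so that $x_1 \leq \dots \leq x_n$, monotonicity of the support gives $y_1 \leq \dots \leq y_n$; decomposing an arbitrary permutation $\sigma$ into adjacent transpositions and applying \cref{eq:submod} to each such swap yields $\sum_i c(x_i, y_i) \leq \sum_i c(x_i, y_{\sigma(i)})$, i.e.~cyclical monotonicity. The supermodular case then reduces to the previous one by applying the result to $-c$, which is submodular, and noting that the monotone non-decreasing coupling between $\mu$ and the pushforward $(y \mapsto -y)\push\nu$ corresponds to $\piantimon$ between $\mu$ and $\nu$.

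The only non-routine step is the adjacent-transposition reduction in the non-strict case; all remaining ingredients (cyclical monotonicity of optimal supports, the characterization of $\pimon$ as the unique monotone plan on $\RR$) are classical.
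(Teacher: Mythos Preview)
The paper does not actually prove this proposition; it is stated as a well-known fact with a citation to \cite{mccann2012glimpse} (and implicitly to \cite{carlier2012optimal,santambrogio2015optimal}), so there is no in-paper argument to compare against. Your proof via $c$-cyclical monotonicity is the standard route and is correct.

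One point worth flagging: in the non-strict case you invoke that $c$-cyclical monotonicity of $\supp(\pimon)$ is \emph{sufficient} for optimality. This implication is not automatic in full generality (there are counterexamples when the cost is allowed to take the value $+\infty$), but it does hold here since $c$ is real-valued and continuous (indeed $C^2$, as assumed in \cref{def:submod}) and the transport cost is finite; see for instance \cite[Thm.~5.10]{villani2009optimal}. It would be worth making this explicit. The adjacent-transposition reduction and the supermodular-by-reflection argument are both fine.
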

The linearized GW cost with parameter $m\geq0$ is submodular on the region $\submod\defeq\{(x,y)\mid xy\geq -m \}$ and supermodular on $\supmod\defeq\{(x,y)\mid xy\leq -m \}$ (see \cref{fig:submod} for an illustration), so we cannot directly apply this proposition. 
    \begin{figure}
        \centering
        \begin{subfigure}[t]{.49\textwidth}
            \centering
            \begin{tikzpicture}[scale=1]
    \def\lim{0.7}
    \def\C{0.1}
    \def\eps{0.1}
    \def\samples{50}
    \def\xmargin{0.2}
    \begin{axis}[
            axis x line=middle,
            axis y line=middle,
            xticklabels=\empty,
            yticklabels=\empty,
            xlabel={$x$},
            ylabel={$y$},
            xmin=-\lim-\xmargin,
            ymin=-\lim,
            xmax=\lim+\xmargin,
            ymax=\lim,
            axis on top,
            axis equal
            ]
        \draw[draw=none, name path=up] (-\lim-\xmargin,\lim) -- (0,\lim);
        \draw[draw=none, name path=upright] (0,\lim+1) -- (\lim+\xmargin,\lim+1);
        \draw[draw=none, name path=down] (0,-\lim) -- (\lim+\xmargin,-\lim);
        \draw[draw=none, name path=downleft] (-\lim-\xmargin,-\lim-1) -- (0,-\lim-1);
        \draw[draw=none, name path=axisleft] (-\lim-\xmargin,0) -- (0,0);
        \draw[draw=none, name path=axisright] (0,0) -- (\lim+\xmargin,0);
        \draw[draw=none, name path=axisleftA] (-\lim-\xmargin,0) -- (0-\eps,0);
        \draw[draw=none, name path=axisleftB] (0-\eps,0) -- (0,0);
        \draw[draw=none, name path=axisrightA] (0+\eps,0) -- (\lim+\xmargin,0);
        \draw[draw=none, name path=axisrightB] (0,0) -- (0+\eps,0);
        \draw[draw=none, name path=upB] (0-\eps,\lim+1) -- (0,\lim+1);
        \draw[draw=none, name path=downB] (0,-\lim-1) -- (0+\eps,-\lim-1);
        \addplot[color=tabblue, domain=-\lim-\xmargin:-\eps, samples=\samples, name path=S1, thick]{-\C/x};
        \addplot[color=tabblue, domain=\eps:\lim+\xmargin, samples=\samples, name path=S2, thick]{-\C/x};
        \node[tabblue] at (0.18,-\lim+0.05) [right] {$x\mapsto -m/x$};
        \addplot[fill=tabblue!10, fill opacity=1] fill between[of=axisleftA and S1];
        \addplot[fill=tabblue!10, fill opacity=1] fill between[of=axisright and upright];
        \addplot[fill=tabblue!10, fill opacity=1] fill between[of=downleft and axisleft];
        \addplot[fill=tabblue!10, fill opacity=1] fill between[of=S2 and axisrightA];
        \addplot[fill=tabblue!10, fill opacity=1] fill between[of=axisleftB and upB];
        \addplot[fill=tabblue!10, fill opacity=1] fill between[of=downB and axisrightB];
        \node[opacity=.7] at (0.5,0.5) {$\nearrow$};
        \node[opacity=.7] at (-0.5,-0.5) {$\nearrow$};
        \def\xa{-0.45}
        \def\xb{0.2}
        \def\ya{-0.2}
        \def\yb{0.1}
        \def\xab{-0.65}
        \def\xbb{-0.22}
        \def\yab{0.3}
        \def\ybb{0.6}
        \draw[tabred, dashed] (\xab,\yab) rectangle (\xbb,\ybb);
        \draw[fill=white] (\xa,\ya) rectangle (\xb,\yb);
        \draw[tabred, fill=tabred!10] (\xa,\ya) rectangle (\xb,\yb);
        \def\size{1.6}
        \fill[tabred] (\xa,\yb) circle[radius=\size pt];
        \fill[tabred] (\xb,\ya) circle[radius=\size pt];
        \fill[tabred, opacity=.6] (\xab,\yab) circle[radius=\size pt];
        \fill[tabred, opacity=.6] (\xbb,\ybb) circle[radius=\size pt];
        \fill[tabred] (\xa,\ya) node[above right] {$R$};
        \fill[tabblue] (\lim+\xmargin-.02,\lim-.02+.05) node[below left] {$\submod$};
        \node[opacity=.8, below left] at (\lim+\xmargin-.02,\ya) {$\supmod$};
        \draw[tabred] (\xa,\yb) node [above]  {\footnotesize $(x_0,y_0)$};
        \draw[tabred] (\xb,\ya) node [below]  {\footnotesize $(x_1,y_1)$};
    \end{axis}
\end{tikzpicture}
            \vspace{-7mm}
        \end{subfigure}
        \hfill
        \begin{subfigure}[t]{.49\textwidth}
            \centering
            \begin{tikzpicture}[scale=1]
    \def\lim{0.7}
    \def\C{-0.1}
    \def\eps{0.1}
    \def\samples{50}
    \def\xmargin{0.2}
    \begin{axis}[
            axis x line=middle,
            axis y line=middle,
            xticklabels=\empty,
            yticklabels=\empty,
            xlabel={$x$},
            ylabel={$y$},
            xmin=-\lim-\xmargin,
            ymin=-\lim,
            xmax=\lim+\xmargin,
            ymax=\lim,
            axis on top,
            axis equal
            ]

        \draw[draw=none, name path=up] (-\lim-\xmargin,\lim) -- (0,\lim);
        \draw[draw=none, name path=upright] (0,\lim+1) -- (\lim+\xmargin,\lim+1);
        \draw[draw=none, name path=down] (0,-\lim) -- (\lim+\xmargin,-\lim);
        \draw[draw=none, name path=downleft] (-\lim-\xmargin,-\lim-1) -- (0,-\lim-1);
        \draw[draw=none, name path=axisleft] (-\lim-\xmargin,0) -- (0,0);
        \draw[draw=none, name path=axisright] (0,0) -- (\lim+\xmargin,0);
        \draw[draw=none, name path=axisleftA] (-\lim-\xmargin,0) -- (0-\eps,0);
        \draw[draw=none, name path=axisleftB] (0-\eps,0) -- (0,0);
        \draw[draw=none, name path=axisrightA] (0+\eps,0) -- (\lim+\xmargin,0);
        \draw[draw=none, name path=axisrightB] (0,0) -- (0+\eps,0);
        \draw[draw=none, name path=upB] (0-\eps,\lim+1) -- (0,\lim+1);
        \draw[draw=none, name path=downB] (0,-\lim-1) -- (0+\eps,-\lim-1);
        \addplot[color=tabblue, domain=-\lim-\xmargin:-\eps, samples=\samples, name path=S1, thick]{-\C/x};
        \addplot[color=tabblue, domain=\eps:\lim+\xmargin, samples=\samples, name path=S2, thick]{-\C/x};
        \addplot[fill=tabblue!10, fill opacity=1] fill between[of=downleft and S1];
        \addplot[fill=tabblue!10, fill opacity=1] fill between[of=S2 and upright];
        \node[opacity=.7] at (-0.5,0.5) {$\searrow$};
        \node[opacity=.7] at (0.5,-0.5) {$\searrow$};
        \fill[tabblue] (\lim+\xmargin-.02,\lim-.02+.05) node[below left] {$\submod$};
        \def\ya{-0.2}
        \node[opacity=.8, below left] at (\lim+\xmargin-.02,\ya) {$\supmod$};
    \end{axis}
\end{tikzpicture}
            \vspace{-7mm}
        \end{subfigure}
        \caption{Submodularity region $\submod$ (\textcolor{tabblue}{blue}) and supermodularity region $\supmod$ for the linearized GW cost $c_m$ with parameter $m>0$ \capleft\ or $m<0$ \capright, with an example of a rectangle $R\subset S$ (\textcolor{tabred}{red}) defined by a decreasing matching $(x_0,y_0)\to(x_1,y_1)$ that allows to obtain the monotonicity of optimal plans on $\submod$ (see proof of \cref{prop:monotony-rectangle}). The same argument does not apply to rectangles in $\supmod$ (\textcolor{tabred}{dashed red}), see \cref{rem:Sbar}. Up ({\scriptsize$\nearrow$}) and down ({\scriptsize$\searrow$}) arrows are drawn in regions where the monotonicity of optimal correspondence plans is given by \cref{prop:monotony-rectangle}.}
        \label{fig:submod}
    \end{figure}
In general, for a given cost in dimension one, there are regions of sub and supermodularity and this property gives only few information without knowledge of any particular structure of the regions: to be applicable, submodularity requires a rectangular region.
Given the form of the regions of sub and supermodularity in our particular case, we can state the following.
\begin{proposition}[Monotonicity of optimal plans on a subdomain]
\label{prop:monotony-rectangle}
Let $m> 0$ and denote by $\pi\opt$ an optimal transportation plan for the cost $c_m$.
Then, $[\pi\opt]_{|\submod}$ (the plan restricted to the submodularity region) is the non-decreasing rearrangement between its marginals.
Similarly, if $m < 0$, $[\pi\opt]_{|\supmod}$ (the plan restricted to the supermodularity region) is the non-increasing rearrangement. If $m = 0$, there are four regions of sub/supermodularity, and on each of these regions, the plan is a monotone rearrangement.
\end{proposition}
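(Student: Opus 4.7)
The plan is to combine the restriction principle of optimal transport with the (essentially strict) submodularity of $c_m$ on $S$. The restriction principle asserts that, for any Borel set $A$ with $\pi\opt(A)>0$, the normalized restriction $\pi\opt|_A/\pi\opt(A)$ is itself optimal for the cost $c_m$ between its own marginals; otherwise one could improve it and glue the improvement back to $\pi\opt|_{A^c}$, contradicting the optimality of $\pi\opt$. Taking $A=S$, it is enough to prove that any optimal plan whose support lies in $S$ is concentrated on a non-decreasing graph, which then identifies $\pi\opt|_S$ with the non-decreasing rearrangement between its marginals (see \textit{e.g.}~\cite[Chap.~2]{santambrogio2015optimal}).

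The argument I would use is a direct application of $c_m$-cyclic monotonicity of $\supp(\pi\opt)$. Assume by contradiction that there exist $(x_0,y_0),(x_1,y_1)\in \supp(\pi\opt)\cap S$ with $x_0<x_1$ and $y_0>y_1$, and introduce the rectangle $R=[x_0,x_1]\times[y_1,y_0]$. Cyclic monotonicity gives $c_m(x_0,y_0)+c_m(x_1,y_1)\leq c_m(x_0,y_1)+c_m(x_1,y_0)$, while by Fubini,
\[ c_m(x_0,y_1)+c_m(x_1,y_0)-c_m(x_0,y_0)-c_m(x_1,y_1)=\int_{x_0}^{x_1}\!\int_{y_1}^{y_0} \partial_{xy}c_m(x,y)\,\dd y\,\dd x. \]
Since $\partial_{xy}c_m(x,y)=-4(xy+m)\leq 0$ on $S$, with equality only on the hyperbola $\{xy=-m\}$ (a Lebesgue-null set in the plane), provided $R\subset S$ this integral is strictly negative, contradicting the preceding inequality.

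The sole nontrivial step is therefore the inclusion $R\subset S$, since \emph{a priori} only the two diagonal corners of $R$ are known to lie in $S$. I would settle this by a short case analysis on the signs of $x_0,x_1$. When they have the same sign, monotonicity of $t\mapsto -m/t$ on each half-line, combined with $x_0<x_1$ and $y_1<y_0$, propagates the constraint $xy\geq -m$ from the diagonal corners to the off-diagonal ones. When $x_0<0<x_1$, a short sign computation shows that on each subregion of $R$ where $xy$ has constant sign, its minimum is attained at a diagonal corner. Combined with the bilinearity of $(x,y)\mapsto xy$ (which forces its minimum over $R$ to be attained at a corner), this gives $R\subset S$. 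I expect this verification to be the main technical obstacle; once it is in place, the rest is automatic.

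The variants follow by symmetry. For $m<0$, the cost $c_m$ is (essentially strictly) supermodular on $\overline S$, and the exact same argument with reversed inequalities identifies $\pi\opt|_{\overline S}$ with the non-increasing rearrangement. For $m=0$, the sign of $\partial_{xy}c_0(x,y)=-4xy$ partitions $\RR\times\RR$ into the four closed quadrants, each of which is a genuine (closed, possibly unbounded) rectangle, so the inclusion $R\subset S$ is trivial on each quadrant; the contradiction argument then applies verbatim and yields a non-decreasing rearrangement on the first and third quadrants and a non-increasing one on the second and fourth.
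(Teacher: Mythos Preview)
Your proposal is correct and follows essentially the same route as the paper: restrict to $S$, take two points in a decreasing configuration, observe that the induced rectangle $R$ lies entirely in $S$, and derive a contradiction from (strict) submodularity on $R$. The paper phrases the last step as ``restrict $\pi\opt$ to $R$ and apply \cref{prop:submod}'' whereas you invoke two-point $c_m$-cyclic monotonicity together with the Fubini identity for $\partial_{xy}c_m$; these are the same argument, and the paper simply appeals to the figure for the inclusion $R\subset S$ that you verify by a sign case analysis.
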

\noindent \cref{fig:submod} illustrates the regions, with up ({\scriptsize$\nearrow$}) and down ({\scriptsize$\searrow$}) arrows,  where the monotonicity of optimal correspondence plans is given by this proposition.
\begin{proof}
In this proof, we use the fact that if $\pi\opt$ is optimal, then it is also optimal when restricted on a domain between the corresponding marginals.
In particular, when $m > 0$, the plan $[\pi\opt]_{|\submod}$ is necessarily optimal for the cost $c_m$ between its marginals. Consider $(x_0,y_0)$ and $(x_1,y_1)$ in $\supp(\smash{[\pi\opt]_{|\submod}})$ such that $x_0 < x_1$ and $y_0 > y_1$. The particular geometry of the space $\submod$ ensures that in this particular configuration, the rectangle $R = [x_0,x_1]\times[y_1, y_0]$ is necessarily contained in $\submod$ (see \cref{fig:submod}). As a consequence, the strict submodularity of the cost applies on $R$ to prove that $[\pi\opt]_{|R}$ is the non-decreasing rearrangement plan, contradicting the configuration.
The proof goes similarly when $m < 0$. 
The case $m = 0$ yields four different regions of sub/supermodularity which are quadrants. Since these regions are unions of rectangles on which the cost is strictly sub/supermodular, the argument above applies.
\end{proof}

\begin{remark}[Rectangular regions]
\label{rem:Sbar}
Let us underline that it is not possible to apply the argument to the whole region $\submod$ since any two points of $\submod$ are not necessarily contained in a rectangle included in $\submod$. 
The proof above only uses the fact that this property is used for some particular configuration of points, namely whenever $x_0 < x_1$ and $y_0 > y_1$, which is (somewhat fortunately) exactly the configuration we want to consider when wondering if a map may be non-increasing. 
For instance, this argument does not apply to $\supmod$, although the property is of course satisfied on every rectangle contained in it.
\end{remark}

Before going into further details on our complementary study, we recall the discrete formulation of \cref{eq:OT} in dimension one.
Given two sets $\{ x_{1},\dots,x_{N} \}$ and $\{ y_{1},\dots,y_{M} \}$ of elements of $\RR$ and two probability vectors $a$ and $b$, the \cref{eq:OT} problem between the discrete measures $\mu=\sum_{i=1}^{N}a_{i}\delta_{x_{i}}$ and $\nu=\sum_{j=1}^{M}b_{j}\delta_{y_{j}}$ reads
        \begin{equation*}
            \min_{\pi\in U(a,b)}\ \langle C,\pi\rangle,
        \end{equation*}
where $U(a,b)\defeq \{ \pi\in\RR^{N\times M}\mid \pi\one_M=a,\pi^\top\one_N=b\}$ is the \emph{transport polytope}, $C=(c(x_i,y_j))_{i,j}$ is the cost matrix and $\langle \cdot,\cdot\rangle$ is the Frobenius inner product.
In the case of the linearized problem \cref{eq:gw-1d-cont}, we denote by $\smash{C_{\gw(m)}}$ the cost matrix, that has entries $\smash{(C_{\gw(m)})_{i,j}=-x_i^2y_j^2-4m x_iy_j}$ with $m=\langle C_{xy}, \pi\opt\rangle$ and $(C_{xy})_{i,j}=x_iy_j$. In our notation, the index of any cost matrix $C$ is its associated OT cost function.

In the following sections, we study the optimality of the monotone rearrangements $\pimon$ and $\piantimon$. It is worth noting that by submodularity of the map $(x,y)\mapsto -xy$, these two plans have respective covariances $m_\text{min}$ and $m_\text{max}$, where
        \begin{align}
            \label{eq:m-min-max}
            \begin{cases}
                m_\text{min}&\!\!\!\!\!= \min_{\pi}\,\langle C_{xy}, \pi\rangle\\
                m_\text{max}&\!\!\!\!\!= \max_{\pi}\,\langle C_{xy}, \pi\rangle
            \end{cases},\quad\text{with }(C_{xy})_{i,j}=x_iy_j,
        \end{align}
and that for any correspondence plan $\pi$, the value of its covariance $m(\pi)$ lies in the interval $[m_\text{min},m_\text{max}]$.

\noindent We provide in the following a complementary study of the squared distance cost in dimension one, namely
\begin{enumerate}[(i),nolistsep]
    \item a procedure to find counter-examples to the optimality of the monotone rearrangements;
    \item empirical evidence for the tightness of \cref{theorem:quad-main};
    \item a proof of the instability of having a monotone rearrangement as an optimal correspondence plan;
    \item a new result on the optimality of the monotone rearrangements when the measures are composed of two distant parts.
\end{enumerate}
All experiments are reproducible and the code can be found on GitHub\footnote{The link of the code is \url{https://github.com/theodumont/monge-gromov-wasserstein}.}.
\subsubsection{Adversarial computation of non-monotone optimal correspondence plans}
\label{subsec:quadra1D_adversarial}
Theorem~3.1 of \cite{titouan2019sliced} claims that in the one-dimensional discrete case with $N=M$ and $a=b=\one_N$, the optimal solution of \cref{eq:QAP} is either the non-decreasing rearrangement $\pimon$ or the non-increasing one $\piantimon$. 
It seems to be the case with high probability empirically when generating random discrete measures.
While this claim is true for $N=1,2$ and $3$,
a counter-example for $N\geq 7$ points has recently been exhibited in \cite{beinert2022assignment}. We further propose a procedure to automatically obtain additional counter-examples, demonstrating empirically that such adversarial measures occupy a non-negligible place in the space of discrete measures.
We propose to perform a gradient descent over the space of discrete measures on $\Xx\times\Yy$ using an objective function that favors the strict sub-optimality of the monotone rearrangements. Let us now detail this procedure.

For $N\geq 1$, we consider the set of discrete measures over $\Xx\times \Yy=\RR\times\RR$ with $N$ points and uniform mass, i.e.~of the form $\smash{\pi=\frac{1}{N}\sum_{i=1}^{N}\delta_{(x_{i},y_{i})}}$. Such plans $\pi$ can be seen as the identity mapping between vectors $X=(x_1,\dots,x_N)$ and $Y=(y_1,\dots,y_N)$, and we therefore note $\pi=\id(X,Y)$. Denoting by $c_{\gw}$ the functional that takes a correspondence plan and returns its cost on the GW problem, we then define $\Ff:\RR^N\times\RR^N\to\RR$ by
\begin{equation*}
\Ff(X,Y)\defeq c_{\gw}(\pi)-\min \left\{ c_{\gw}(\pimon), c_{\gw}(\piantimon)\right\},
\end{equation*}
where $\pi=\id(X,Y)$ and $\pimon$ and $\piantimon$ are the monotone rearrangements between $X$ and $Y$.
This quantifies how well the plan $\pi$ compares to the two monotone rearrangements. We generate $N$ points at random in $[0,1]^2$ and then perform a simple gradient descent over the positions of the points $(X,Y)=(x_{i},y_{i})_i$ to minimize the objective function $\Ff(X,Y)$ over $(X,Y) \in (\RR^N)^2$.
We include an early-stopping threshold $-\tau<0$ since when $\Ff(\pi)$ becomes negative---i.e.~we found an adversarial example---, the objective function often decreases exponentially fast.
The procedure can be found in \cref{algorithm:gd} below. We implemented it using PyTorch's automatic differentiation \cite{pytorch} and used \cite{blondel2020fast} to implement a differentiable sorting operator \texttt{sort} to compute the monotone rearrangements.
Adversarial plans $\pi\adv=\id(X\adv,Y\adv)$ obtained by \cref{algorithm:gd} are not \textit{a priori} optimal for the GW cost between their marginals; but they have at least a better cost than the monotone rearrangements since $\Ff(X\adv,Y\adv)< 0$, proving the sub-optimality of the latter.

\begin{figure}[h]
\centering
\begin{minipage}{.8\linewidth}
    \begin{algorithm}[H]
    \flushleft
    \caption{Simple gradient descent over the positions $(x_i)_i$ and $(y_i)_i$.}
    \label{algorithm:gd}
    \vspace{1mm}
    \textbf{Parameters:}
    \begin{itemize}[nolistsep,wide=5pt]
        \item $N\geq1$: number of points of the measures
        \item $N_\text{iter}\geq1$: maximum number of iterations
        \item $\eta>0$: step size
        \item $\tau>0$: early stopping threshold
    \end{itemize}
    \vspace{2mm}
    \textbf{Algorithm:}
    \begin{algorithmic}[1]
        \State $X\gets$ $N$ random values in $[0,1]$, then centered
        \State $Y\gets$ $N$ random values in $[0,1]$, then centered
        \For{$i\in\{1,\dots,N_\text{iter}\}$}
        \State $\pimon\gets\texttt{id(sort(}X\texttt{)},\texttt{sort(}Y\texttt{))}$ \Comment{\texttt{id} is the identity mapping}
        \State $\piantimon\gets\texttt{id(sort(}X\texttt{)},\texttt{reverse(sort(}Y\texttt{)))}$
        \State $\pi_{\phantom{\text{mon}}}\gets\texttt{id(}X,Y\texttt{)}$
        \State $\Ff(X,Y)\gets \texttt{GW(}\pi\texttt{)}-\texttt{min(GW(}\pimon\texttt{)},\texttt{GW(}\piantimon\texttt{))}$ \Comment{\texttt{GW} computes $c_{\gw}$}
        \State \textbf{if} $\Ff(X,Y)\leq -\tau$ \textbf{then} stop \Comment{early stopping}
        \State $(X,Y)\gets (X,Y)-\eta\nabla\Ff(X,Y)$  \Comment{step of gradient descent}
        \EndFor
        \State return $\pi\adv=\texttt{id(}X,Y\texttt{)}$
        \end{algorithmic}
        \vspace{2mm}
    \textbf{Output:} a plan $\pi\adv$ with better GW cost than $\pimon$ and $\piantimon$
  \end{algorithm}
\end{minipage}
\end{figure}

On \Cref{fig:res-GD} is displayed an example of adversarial plans obtained following this procedure. It can be observed that during the descent, the plan $\pi$ has difficulties getting out of what seems to be a saddle point consisting of being (close to) a monotone rearrangement between its marginals. Moreover, it is worth noting that the marginals of our typical adversarial plans, such as the one of \cref{fig:res-GD}, are often similar to the counter-example proposed in \cite{beinert2022assignment}, where both measures have their mass concentrated near zero, except for one outlier for $\mu$, and two for $\nu$, one on each tail.

\begin{figure}[h]
    \centering
    \begin{subfigure}[b]{.32\linewidth}
        \centering
\begin{tikzpicture}[scale=.65]

    \begin{axis}[
    title={Objective function $\Ff(X,Y)$},
    xlabel={Iterations},
    xmin=-3.25, xmax=68.25,
    ymin=-0.0133060135878623, ymax=0.0390243952162564,
    grid = major,
    axis lines=left,
    ]
    \addplot[very thick, no marks, tabblue]
    table {%
    0 0.0366457402706146
    1 0.0128296026960015
    2 0.00764693738892674
    3 0.00516638578847051
    4 0.0037919997703284
    5 0.00292218313552439
    6 0.0023217792622745
    7 0.00190227513667196
    8 0.00157574948389083
    9 0.00134416075889021
    10 0.00114074978046119
    11 0.000998637056909502
    12 0.000866047223098576
    13 0.00076767144491896
    14 0.000676982395816594
    15 0.000603142834734172
    16 0.000542129680979997
    17 0.000482635397929698
    18 0.000441085547208786
    19 0.000394769042031839
    20 0.000358618883183226
    21 0.000328242749674246
    22 0.000294765632133931
    23 0.000271784490905702
    24 0.000246440700720996
    25 0.000222068309085444
    26 0.000205642892979085
    27 0.000186464632861316
    28 0.000164534663781524
    29 0.000155049579916522
    30 0.000136664952151477
    31 0.000118492491310462
    32 0.000111368106445298
    33 9.13372787181288e-05
    34 7.99107947386801e-05
    35 7.14088673703372e-05
    36 5.07369404658675e-05
    37 4.42669843323529e-05
    38 3.13138007186353e-05
    39 1.63891818374395e-05
    40 5.7496945373714e-06
    41 -1.32657296489924e-05
    42 -1.94765452761203e-05
    43 -4.64202603325248e-05
    44 -5.04592899233103e-05
    45 -7.56503432057798e-05
    46 -8.77406564541161e-05
    47 -0.000117279996629804
    48 -0.000136992719490081
    49 -0.000163182790856808
    50 -0.000196536770090461
    51 -0.000234442006330937
    52 -0.000290850817691535
    53 -0.000317854166496545
    54 -0.000384356826543808
    55 -0.00048568716738373
    56 -0.000578868668526411
    57 -0.000735051580704749
    58 -0.000863353256136179
    59 -0.0012065467890352
    60 -0.00146772898733616
    61 -0.00207047536969185
    62 -0.00256216758862138
    63 -0.00445789750665426
    64 -0.00563399493694305
    65 -0.0109273586422205
    };
    \end{axis}

    \end{tikzpicture}
        \vspace{-9mm}
    \end{subfigure}
    \begin{subfigure}[b]{.32\linewidth}
        \centering
\begin{tikzpicture}[scale=.65]

    \begin{axis}[
    title={Initial plan $\pi_0$, (generated at random)},
    xlabel={$x$},
    ylabel={$y$},
    xmin=-0.860483883693814, xmax=0.860483883693814,
    ymin=-0.98424434363842, ymax=0.98424434363842,
    grid = major,
    axis lines=left,
    axis equal,
    ]
    \addplot [draw=tabred, fill=tabred, mark=*, only marks,opacity=.6, draw opacity=0]
    table{%
    x  y
    0.555029630661011 -0.0158676505088806
    -0.236768633127213 0.355503797531128
    0.544643044471741 0.104050576686859
    -0.194337338209152 -0.0446888208389282
    -0.283418506383896 -0.407166182994843
    -0.326179891824722 0.230781435966492
    -0.253536850214005 -0.248354732990265
    -0.139955550432205 0.245159804821014
    -0.176041215658188 0.456286549568176
    0.549169182777405 0.334675550460815
    -0.333650857210159 -0.431270480155945
    0.207398444414139 0.353976368904114
    0.156865984201431 -0.382931172847748
    -0.250565379858017 0.308950901031494
    0.261155754327774 -0.470198333263397
    -0.210368067026138 0.0642983317375183
    -0.169332355260849 -0.189902603626251
    0.392874985933304 -0.0384848713874817
    -0.289017468690872 0.260072827339172
    0.382877200841904 -0.194872617721558
    0.322315603494644 0.371125638484955
    0.05231574177742 0.108898401260376
    0.33347424864769 -0.237155020236969
    -0.31975182890892 0.493428528308868
    -0.0101818144321442 -0.30777645111084
    -0.221232026815414 0.152289867401123
    0.147978037595749 0.338643074035645
    -0.317791134119034 -0.303987145423889
    -0.400682240724564 0.395698249340057
    -0.146978765726089 -0.23452764749527
    0.512488603591919 -0.324599802494049
    -0.30556783080101 -0.445916712284088
    -0.0358024537563324 -0.0697241425514221
    -0.0912812650203705 0.187010586261749
    -0.323942273855209 0.0577062368392944
    0.280057102441788 -0.258328318595886
    -0.168274790048599 -0.110755920410156
    0.382387191057205 -0.300583899021149
    -0.29683730006218 -0.345050394535065
    0.149961620569229 -0.124405741691589
    -0.0355304777622223 0.0557541251182556
    -0.0407645404338837 0.284307897090912
    0.0247271358966827 0.27869176864624
    -0.0798503458499908 0.420502126216888
    0.00401011109352112 -0.488527834415436
    -0.413885325193405 -0.397468686103821
    0.395864278078079 0.0138669610023499
    0.0377726256847382 -0.093356728553772
    0.222262471914291 0.398219287395477
    -0.027001291513443 0.0481876134872437
    0.173529058694839 0.45062130689621
    0.095820277929306 0.258765935897827
    -0.185678154230118 0.278037309646606
    0.152429193258286 0.0925498604774475
    0.498135596513748 0.260421872138977
    -0.429192751646042 -0.429079830646515
    -0.0217974483966827 -0.17097008228302
    0.312465757131577 0.383726358413696
    0.0647160112857819 0.311473965644836
    0.247042804956436 0.186284482479095
    -0.275886982679367 0.11732143163681
    -0.0585298240184784 -0.398380994796753
    -0.131526082754135 0.0180906653404236
    -0.382843405008316 0.495713412761688
    0.428420037031174 -0.38513195514679
    0.318877905607224 -0.187862992286682
    -0.0301567018032074 -0.4527707695961
    0.485664933919907 0.403507351875305
    0.373182088136673 0.288419723510742
    -0.198172718286514 -0.407697439193726
    0.0928529798984528 0.129496157169342
    -0.263608783483505 -0.335467755794525
    -0.3370221555233 0.065843939781189
    0.0660794675350189 -0.414897620677948
    -0.204000979661942 0.12734979391098
    0.450773566961288 -0.339365720748901
    0.231547206640244 -0.112659335136414
    0.323924750089645 -0.452231049537659
    0.11105141043663 -0.288233816623688
    -0.207259505987167 -0.088228166103363
    0.0673986375331879 0.209961175918579
    0.383201092481613 -0.45745986700058
    0.259228736162186 -0.397888660430908
    -0.405101984739304 0.356888294219971
    -0.369065493345261 -0.243187546730042
    -0.124457567930222 -0.386952579021454
    -0.102398365736008 0.367921233177185
    -0.30541530251503 0.167475700378418
    -0.425343066453934 0.165306985378265
    0.201642423868179 0.0133430957794189
    -0.0095919668674469 -0.405273020267487
    0.185787290334702 0.493967235088348
    -0.386108547449112 0.228136360645294
    -0.322127968072891 -0.394426882266998
    -0.143171042203903 0.381069421768188
    0.413891106843948 0.289939701557159
    0.225552827119827 0.196329057216644
    -0.245900183916092 0.161448657512665
    0.0844146907329559 0.154719233512878
    -0.179586321115494 0.306278169155121
    0.301403194665909 0.0181658864021301
    -0.177426904439926 -0.306294500827789
    -0.38782075047493 0.394672214984894
    -0.0506928861141205 0.0619080066680908
    0.140196830034256 -0.43829607963562
    0.365351229906082 -0.035691499710083
    0.38993713259697 -0.171832323074341
    0.265565484762192 0.102889955043793
    -0.260365158319473 -0.223024189472198
    -0.323181599378586 0.0323825478553772
    -0.317005842924118 -0.293777823448181
    -0.266747921705246 0.394475519657135
    };
    \end{axis}

    \end{tikzpicture}
        \vspace{-9mm}
    \end{subfigure}
    \begin{subfigure}[b]{.32\linewidth}
        \centering
\begin{tikzpicture}[scale=.65]

    \begin{axis}[
    title={Final plan $\pi_f$ (iter.\@ 66)},
    xlabel={$x$},
    xmin=-0.860483883693814, xmax=0.860483883693814,
    ymin=-0.98424434363842, ymax=0.98424434363842,
    grid = major,
    axis lines=left,
    axis equal,
    ]
    \addplot [draw=tabred, fill=tabred, mark=*, only marks,opacity=.6, draw opacity=0]
    table{%
    x  y
    -0.0475812740623951 -0.00340059632435441
    -0.0441303513944149 0.0977057367563248
    -0.0442190244793892 0.0718558132648468
    -0.0466805137693882 0.00833576358854771
    -0.0477646812796593 -0.108987741172314
    -0.0413817279040813 0.109386973083019
    -0.0483411774039268 -0.109251156449318
    -0.048307828605175 0.0838327631354332
    -0.0371303409337997 0.0613369047641754
    0.817175030708313 0.897216498851776
    -0.0482831038534641 -0.108428820967674
    -0.0447965525090694 0.110614284873009
    -0.036015722900629 -0.108963027596474
    -0.0438485108315945 0.0945742800831795
    0.0132665485143661 -0.0915598645806313
    -0.0454493500292301 0.0470575354993343
    -0.0483983010053635 -0.0836576446890831
    -0.0449897795915604 -0.00183166423812509
    -0.0481329299509525 0.0934909284114838
    0.00847463868558407 -0.0838896632194519
    -0.0201860945671797 0.110789969563484
    -0.0454405024647713 0.0466846711933613
    0.00794699043035507 -0.10198587924242
    -0.0438002422451973 0.0828163623809814
    -0.0487846024334431 -0.109610706567764
    -0.0438351295888424 0.0812704414129257
    -0.0448389202356339 0.0490087643265724
    -0.047769483178854 -0.109528876841068
    -0.0364673547446728 0.0948797687888145
    -0.0489671863615513 -0.109020784497261
    0.804644048213959 -0.843340396881104
    -0.0490020290017128 -0.109648950397968
    -0.0476549714803696 0.0018444717861712
    -0.0438759811222553 0.0812242925167084
    -0.0450367592275143 0.047038622200489
    -0.00652758218348026 -0.10457718372345
    -0.0476841442286968 -0.0261788815259933
    0.0341730751097202 -0.0964658558368683
    -0.0482552126049995 -0.105813428759575
    -0.0450278036296368 -0.0431392602622509
    -0.0453220792114735 0.0466888211667538
    -0.0454730838537216 0.10557959228754
    -0.0446379743516445 0.0966566875576973
    -0.0439731553196907 0.109578840434551
    -0.0489734932780266 -0.10903537273407
    -0.046284195035696 -0.109579287469387
    -0.0462511293590069 0.0215715114027262
    -0.0470862984657288 -0.0139435715973377
    -0.0438619293272495 0.10637903958559
    -0.0459479764103889 0.0467469207942486
    -0.0447905547916889 0.0636114850640297
    -0.0379205122590065 0.0914457887411118
    -0.0147720724344254 0.110659137368202
    -0.0452795699238777 0.0466407015919685
    0.0134735815227032 0.110991790890694
    -0.0483037307858467 -0.100258551537991
    -0.0478320382535458 -0.0571895390748978
    -0.0187774132937193 0.110866993665695
    -0.0449938513338566 0.0993728265166283
    -0.0436838045716286 0.0743295475840569
    -0.0154984556138515 0.0642513930797577
    -0.0482148230075836 -0.0983421355485916
    -0.046513669192791 0.0302988514304161
    -0.0447973273694515 0.101852312684059
    0.772858798503876 -0.81375914812088
    -0.0150513425469398 -0.0950996652245522
    -0.0488341301679611 -0.109095200896263
    0.558734118938446 0.855513453483582
    -0.0306842885911465 0.102204032242298
    -0.048664890229702 -0.108449772000313
    -0.0476398505270481 0.0466734692454338
    -0.0487281531095505 -0.102116376161575
    -0.0462375953793526 0.0470677465200424
    -0.0489896237850189 -0.109523631632328
    -0.0437434874475002 0.0641783475875854
    0.815857708454132 -0.703193187713623
    -0.0450068674981594 -0.0431612208485603
    0.0342482104897499 -0.110243208706379
    -0.0157681405544281 -0.108991578221321
    -0.0476663447916508 -0.0103251747786999
    -0.0437650494277477 0.0711103230714798
    0.259046137332916 -0.122416362166405
    0.0134525373578072 -0.0671508312225342
    -0.0443049035966396 0.110298775136471
    -0.0489722192287445 -0.109457820653915
    -0.0462229363620281 -0.108776450157166
    -0.0439266040921211 0.103450559079647
    -0.0452461540699005 0.0952032804489136
    -0.0445888079702854 0.109978541731834
    -0.0464168898761272 0.0215739142149687
    -0.0488363690674305 -0.109651520848274
    -0.0439438261091709 0.107009597122669
    -0.0251717530190945 0.102297611534595
    -0.020452544093132 -0.109555952250957
    -0.0438580699265003 0.106445752084255
    -0.0192825943231583 0.110856600105762
    -0.0436716377735138 0.074300192296505
    -0.035883292555809 0.0879584029316902
    -0.0453389137983322 0.0485817119479179
    -0.0447676591575146 0.109777428209782
    -0.0464421845972538 0.0215416569262743
    -0.0471973158419132 -0.109632275998592
    -0.0438513122498989 0.106677636504173
    -0.0439114719629288 0.0467464290559292
    -0.0359955914318562 -0.109192289412022
    -0.0482445061206818 -0.00181822525337338
    0.0022273832000792 -0.107356525957584
    -0.0451836325228214 0.0466696694493294
    -0.0480019599199295 -0.109633311629295
    -0.0447049923241138 0.0470405519008636
    -0.0483166500926018 -0.109632670879364
    -0.0442694015800953 0.0842185094952583
    };
    \end{axis}

    \end{tikzpicture}
        \vspace{-9mm}
    \end{subfigure}
    \caption{Gradient descent results with parameters $N=122$, $\eta=26$, $\tau=2$.}
    \label{fig:res-GD}
\end{figure}

Furthermore, examining the optimal correspondence plan for these adversarial examples allows to exhibit cases where it is not a map, providing empirical evidence for the following conjecture.
        \begin{conjecture}[Tightness of \cref{theorem:quad-main}]
            \label{conj:tight}
            There exist measures $\mu$ and $\nu$ for which no optimal correspondence plan for \cref{eqn:GW-squared-distance} is a map, but always a union of two graphs (that of two maps or one map and one anti-map). This result holds even if $\mu$ has a density, classical assumption for the existence of an optimal transport map.
        \end{conjecture}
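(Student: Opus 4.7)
The plan is to promote the numerical evidence of \cref{subsec:quadra1D_adversarial} into a rigorous existence result in dimension one. I would start by fixing a discrete adversarial configuration $(\mu_N, \nu_N)$ produced by \cref{algorithm:gd}, say with $N$ around the value for which the counter-example of \cite{beinert2022assignment} first appears, and certify analytically (by a finite enumeration over $\mathfrak{S}_N$ combined with a branch-and-bound argument on the QAP) that an optimal permutation $\sigma^\star$ is genuinely \emph{split}: there exists an atom $x_{i_0}$ of $\mu_N$ which, after smoothing, must be cut across two well-separated regions of $\supp \nu_N$. Using the fact that the corresponding correlation $m^\star = \sum_i x_i y_{\sigma^\star(i)}/N$ is non-zero for these examples, the linearized cost $c_{m^\star}(x,y) = -x^2y^2 - 4 m^\star x y$ satisfies both subtwist and $2$-twist (\cref{theorem:quad-main}(i) in one dimension), so optimal plans for the linearized problem are at most a map/anti-map. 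The certification then amounts to checking that neither the map nor the anti-map branch can carry all the $\mu_N$-mass at $x_{i_0}$.

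Next, I would pass to the continuous setting by convolution: set $\mu_\epsilon \defeq \mu_N \ast \rho_\epsilon$ with $\rho_\epsilon$ a smooth bump of width much smaller than the minimal spacing of the $x_i$, and keep $\nu \defeq \nu_N$. Then $\mu_\epsilon \ll \Ll^1$. By lower semicontinuity of $\gw_2^2$ and compactness of $\Pi(\mu,\nu)$, every weak limit of optimizers $\pi^\star_\epsilon$ is optimal for $\gw_2^2(\mu_N,\nu_N)$. Combined with the certification above, any $\pi^\star_\epsilon$ for $\epsilon$ small enough must distribute an amount of $\mu_\epsilon$-mass of order $\Theta(1)$ near $x_{i_0}$ onto two disjoint clusters in $\supp \nu$, which is incompatible with $\pi^\star_\epsilon = (\id, T)\push\mu_\epsilon$ since $\mu_\epsilon$ has a connected density around $x_{i_0}$.

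The main obstacle is the first step. Proving that the optimum for the QAP has a bi-map structure (and not merely that it beats the two monotone plans) requires ruling out every non-monotone map as well; the numerical observation only shows that the monotone plans are suboptimal. An intermediate difficulty is the stability of $m^\star$ under the smoothing: one must ensure that $m^\star_\epsilon$ stays away from $0$ so as to remain in the $2$-twist regime of \cref{theorem:quad-main}(i) rather than drift into the rank-deficient regime (iii) where a Monge map exists. Both points should follow from the strict $2$-twist structure of $c_{m^\star}$ on $\supp \pi^\star_N$, which provides enough rigidity to apply an implicit-function-type stability argument for the argmin correspondence $(\mu,\nu) \mapsto \argmin \gw_2^2(\mu,\nu)$ at the adversarial point; but a clean formulation (and the concomitant combinatorial certification of the discrete case) is where the real work lies.
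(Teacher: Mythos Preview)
First, note that the paper does \emph{not} prove this statement; it is presented as a conjecture, supported only by the numerical experiments of \cref{algorithm:bi-map} and \cref{fig:res-bi-map}. So there is no proof in the paper to compare against; your proposal is an attempt to upgrade empirical evidence into an actual theorem.

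The strategy has a genuine gap at the core of the limiting argument. For the discrete configurations produced by \cref{algorithm:gd} (uniform mass, $N$ points on each side), \cref{eq:QAP} always admits a \emph{permutation} as an optimizer (see \cite[Thm.~1]{maron2018probably}, \cite[Thm.~4.1.2]{vayer2020contribution}). A permutation is a map, not a bi-map. Hence your ``certification'' step cannot establish that the discrete optimum is a bi-map; it can at best establish that the optimal permutation is non-monotone, which is a much weaker statement and is precisely what \cite{beinert2022assignment} already shows. When you smooth $\mu_N$ to $\mu_\epsilon$ and pass to the limit, weak limits of $\pi_\epsilon^\star$ land in the set of optimal plans for the discrete problem, which contains a map; nothing prevents each $\pi_\epsilon^\star$ from being a (non-monotone, discontinuous) map as well.

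The specific step that fails is the sentence ``which is incompatible with $\pi^\star_\epsilon=(\id,T)\push\mu_\epsilon$ since $\mu_\epsilon$ has a connected density around $x_{i_0}$''. This is false: a measurable map $T:\RR\to\RR$ can perfectly well send a connected interval to two disjoint clusters by being discontinuous (e.g.\ a step function). Sending nearby mass to far-apart targets is not the same as sending a \emph{single} $x$ to two distinct $y$'s, which is what ``bi-map'' means. To prove the conjecture you would instead need to show, for the linearized cost $c_{m^\star}$ with $\mu_\epsilon\ll\Ll^1$, that the unique map/anti-map decomposition guaranteed by subtwist has a non-trivial anti-map part; equivalently, that \emph{every} measurable $T$ with $T\push\mu_\epsilon=\nu$ violates $c_{m^\star}$-cyclical monotonicity on a set of positive $\mu_\epsilon$-measure. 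Neither the adversarial construction nor the weak-limit argument addresses this, and the paper leaves it open as well.
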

In order to approximate numerically the case of a measure that has density with respect to the Lebesgue measure, we convolve our adversarial measures $\mu\adv=(X\adv,\one_N)$ and $\nu\adv=(Y\adv,\one_N)$ with a Gaussian measure of standard deviation $\sigma$ and represent them in Eulerian coordinates $a\adv$ and $b\adv$; that is we evaluate the closed form densities on a fine enough regular grid.
When $\sigma$ is large, the optimal correspondence plan is probably induced by a monotone rearrangement, as it is the case very frequently empirically; on the contrary, if $\sigma$ is sufficiently small, i.e.~when the measures are very close to their sum of Dirac measures discrete analogous, the optimal correspondence plan should not be a monotone rearrangement, by construction of $\mu\adv$ and $\nu\adv$.

        \begin{remark}[Counter-examples from \cref{algorithm:gd} sometimes agree with \cref{conj:tight}]
            Because of the adversarial nature of $\pi\adv$ for the sub-optimality of $\pimon$ and $\piantimon$, we know that when $\sigma$ is sufficiently small, the optimal correspondence plan is not a monotone rearrangement. Still, it could be the case that this optimal plan is a map, but not a monotone one, and there is \textit{a priori} no reason to believe that $\pi\adv$ will agree with \cref{conj:tight}. Surprisingly, it sometimes does, as the numerical experiments below suggest.
        \end{remark}

        In order to find the optimal correspondence plan $\pi\opt$ between $\mu\adv$ and $\nu\adv$, we leverage the fact that $\pi\opt$ is a solution of its associated linearized problem. Therefore, a minimizer of the GW functional is given by
        \begin{equation}
            \argmin_{\pi\opt_m}\ \Big\{ \gw(\pi_{m}\opt)\ \big\vert\ \pi_m\opt \in \argmin_{\pi \in U(a\adv,b\adv)}\langle C_{\gw(m)},\pi\rangle ,\, m \in [m_\text{min},m_\text{max}]\Big\}  ,
        \end{equation}
        where $\smash{(C_{\gw(m)})_{i,j}=-x_i^2y_j^2-4m x_iy_j}$.
        We therefore compute both $m_\text{min}$ and $m_\text{max}$ by solving the linear programs in \cref{eq:m-min-max}, discretize the interval $[m_\text{min},m_\text{max}]$ with $N_{m}$ points, and solve the corresponding linear optimization problem for every value of the parameter $m$ and evaluate the GW cost on each optimal plan for the given parameter $m$. We then check if the optimal plan exhibits a $2$-map structure.
        The procedure is described in \cref{algorithm:2-map}.

    \begin{figure}[h]
        \centering
        \begin{minipage}{.8\linewidth}
            \begin{algorithm}[H]
            \flushleft
                \caption{Generating $2$-maps from adversarial examples.}
                \label{algorithm:2-map}
            \vspace{1mm}
            \textbf{Input:} an adversarial plan $\pi\adv=\id(X\adv,Y\adv)$ obtained from \cref{algorithm:gd}

            \vspace{2mm}
            \noindent\textbf{Parameters:}
            \begin{itemize}[nolistsep,wide=5pt]
                \item $\sigma$: standard deviation of convolution
                \item $N_{x}$: discretization precision
                \item $N_{m}$: discretization precision of the interval $[m_\text{min},m_\text{max}]$
            \end{itemize}
            \vspace{2mm}
            \textbf{Algorithm:}
            \begin{algorithmic}[1]
                \State $a\adv\gets \texttt{convolution}(X\adv,\sigma,N_{x})$
                \State $b\adv\gets \texttt{convolution}(Y\adv,\sigma,N_{x})$ \Comment{(optional for $\nu\adv$)}
                \State $m_{\vphantom{\text{max}}\text{min}}\gets \min_{\pi\in U(a,b)}\ \langle C_{xy},\pi\rangle$ \Comment{solve linear programs}
                \State $m_{\vphantom{\text{min}}\text{max}}\gets \max_{\pi\in U(a,b)}\ \langle C_{xy},\pi\rangle$
                \State \texttt{scores} $\gets \texttt{[]}$
                \For{$m\in\{m_\text{min},\dots,m_\text{max}\}$} \Comment{with $N_{m}$ points}
                    \State $\pi\opt_{m}\gets \argmin_{\pi\in U(a,b)}\ \langle C_{\gw(m)},\pi\rangle$ \Comment{solve linear program}
                    \State append $\gw(\pi\opt_{m})$ to \texttt{scores}
                \EndFor
                \State $\pi\opt\gets\argmax_\pi$ \texttt{scores} \Comment{take best plan for GW}
                \State $b\gets$ ``$\pi\opt$ is a $2$-map''
                \State return $\pi\opt$, $b$
                \end{algorithmic}
                \vspace{2mm}
            \textbf{Outputs:}
            \begin{itemize}[nolistsep,wide=5pt]
                \item $\pi\opt$: optimal plan for GW
                \item $b$: boolean asserting if $\pi\opt$ is a $2$-map
            \end{itemize}
          \end{algorithm}
        \end{minipage}
      \end{figure}

        We display the results on \cref{fig:res-2-map}, where we plot the optimal correspondence plan $\pi\opt$
        in two cases, starting from an adversarial plan:
        \begin{enumerate}[(i),nolistsep,leftmargin=*]
            \item with both marginals $\mu\adv$ and $\nu\adv$ convolved as to simulate densities;
            \item with only the first marginal $\mu\adv$ convolved and the second marginal $\nu\adv$ being a discrete measure with uniform mass.
        \end{enumerate}
        To facilitate the reading of \cref{fig:res-2-map}, we draw a blue pixel at locations $x$ on the discretized $x$-axis if $x$ has two images (making $\pi\opt$ a $2$-map), and at locations $y$ on the $y$-axis if $y$ has two pre-images (making $\pi\opt$ a anti-$2$-map).
        In both cases, we observe that $\pi\opt$ is not a map but a $2$-map instead, similarly to \cite[Section 4.5]{chiappori2010hedonic}. Note that in case (ii), $\nu$ being atomic, there cannot exist a map from $\nu$ to $\mu$, so in both (i) and (ii) we numerically exhibit an instance where there is \textit{a priori} no map from neither $\mu$ to $\nu$ nor $\nu$ to $\mu$.
        We also plot the submodularity regions of the linearized GW cost function with parameter $m(\pi\opt)$ as an overlay and we observe that when the optimal plan gives mass to a region where the cost is submodular or supermodular, it has respectively a non-decreasing or non-increasing behavior in this region.
        \begin{figure}[h]
            \centering
            \begin{subfigure}[b]{.49\linewidth}
                \centering
                \includegraphics[width=\textwidth]{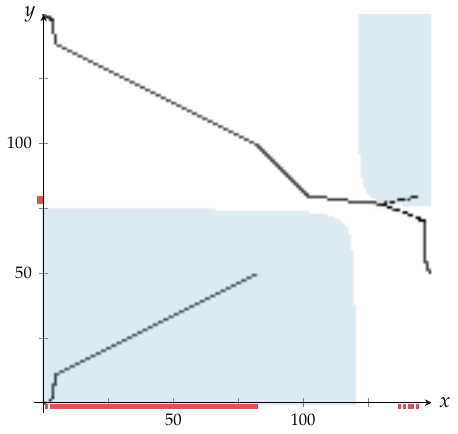}
                \vspace{-6mm}
                \caption*{(i) Convolving both marginals.}
                \label{fig:res-2-map-a}
            \end{subfigure}
            \hfill
            \begin{subfigure}[b]{.49\linewidth}
                \centering
                \includegraphics[width=\textwidth]{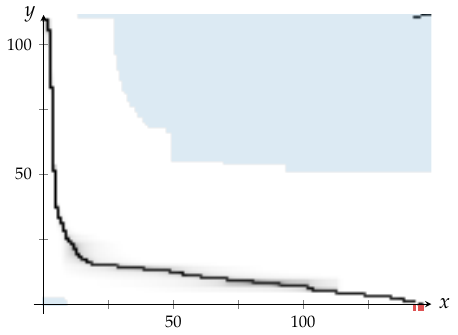}
                \vspace{-6mm}
                \caption*{(ii) Only convolving the first marginal.}
                \label{fig:res-2-map-b}
            \end{subfigure}
                \caption{Optimal correspondence plan (in log scale) obtained with our procedure, with $2$-map and anti-$2$-map coordinates (\textcolor{tabred}{red}) and submodularity regions (\textcolor{tabblue}{blue}). Parameters: $\sigma=\smash{5.10^{-3}}$, $N_{x}=150$, $N_{m}=2000$.
                }
            \label{fig:res-2-map}
        \end{figure}

\subsubsection{Empirical instability of the optimality of monotone rearrangements}
\label{subsec:quadra1D_instability}
The above study demonstrates that there exist probability measures $\mu$ and $\nu$ for which property
    \begin{equation*}
        P(\mu,\nu)\text{ : }\quad \pimon \text{ or } \piantimon \text{ is an optimal correspondence plan between }\mu\text{ and }\nu
    \end{equation*}
    does not hold. However, as it is very likely in practice when generating discrete measures at random, one could ask if property $P$ is at least \emph{stable}, i.e.~if when we have $\mu_0$ and $\nu_0$ satisfying $P(\mu_0,\nu_0)$ there is a small ball around $\mu_0$ and $\nu_0$ (for a given distance, say Wasserstein-$2$) inside which property $P$ remains valid. A negative answer to this---besides, in the symmetric case---is given by the counter-example by \citeauthor{beinert2022assignment}~\cite{beinert2022assignment} with an increasing number of points:
    \begin{proposition}[Instability of the optimality of monotone rearrangements]
        \label{prop:instability}
        There exists two \emph{symmetric} measures $\mu,\nu$ on $\RR$ and sequences $(\mu_n)_n$, $(\nu_n)_n$ that weakly converge to $\mu$, $\nu$ such that optimal plans $\pi_n$ between $\mu_n$ and $\nu_n$ are never a monotone rearrangement.
    \end{proposition}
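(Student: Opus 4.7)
The plan is to derive the statement from the discrete counter-example of \cite{beinert2022assignment} via a scaling argument, taking the symmetric limits to be $\mu=\nu=\delta_0$. First, I fix an integer $N\geq 7$ and two $N$-tuples $(\tilde x_i)_{i\leq N}$, $(\tilde y_i)_{i\leq N}$ realising Beinert's counter-example: for $\tilde\mu \defeq \tfrac{1}{N}\sum_i \delta_{\tilde x_i}$ and $\tilde\nu \defeq \tfrac{1}{N}\sum_i \delta_{\tilde y_i}$, the minimum of \eqref{eqn:GW-quadratic} between $\tilde\mu$ and $\tilde\nu$ is strictly smaller than the values attained by both $\pimon$ and $\piantimon$. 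For each $n\geq 1$, set $S_n\colon z\mapsto z/n$ and define $\mu_n\defeq (S_n)\push\tilde\mu$, $\nu_n\defeq (S_n)\push\tilde\nu$. The supports of $\mu_n$ and $\nu_n$ are contained in $[-R/n,R/n]$ with $R=\max_i\max(|\tilde x_i|,|\tilde y_i|)$, so both sequences converge weakly to $\delta_0$, and $\mu\defeq \nu\defeq \delta_0$ are trivially symmetric around the origin.

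Next, I exploit the scale-covariance of the quadratic GW cost: for any $\pi\in \Pi(\tilde\mu,\tilde\nu)$, the plan $\pi_n\defeq (S_n,S_n)\push \pi \in \Pi(\mu_n,\nu_n)$ satisfies, by a plain change of variables,
\[
\iint\bigl||x-x'|^2 - |y-y'|^2\bigr|^2 \dd\pi_n(x,y)\dd\pi_n(x',y') = n^{-8}\iint\bigl||x-x'|^2 - |y-y'|^2\bigr|^2 \dd\pi(x,y)\dd\pi(x',y').
\]
Since $\pi\mapsto(S_n,S_n)\push \pi$ is a bijection $\Pi(\tilde\mu,\tilde\nu)\to\Pi(\mu_n,\nu_n)$, it restricts to a bijection between the corresponding sets of minimisers for \eqref{eqn:GW-quadratic}.

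Finally, because $S_n$ is strictly increasing, it commutes with monotone rearrangements: the non-decreasing (resp.~non-increasing) rearrangement between $\mu_n$ and $\nu_n$ is precisely the $(S_n,S_n)$-pushforward of the non-decreasing (resp.~non-increasing) rearrangement between $\tilde\mu$ and $\tilde\nu$. By Beinert's counter-example those two rearrangements are strictly sub-optimal between $\tilde\mu$ and $\tilde\nu$, hence by the bijection above their pushforwards are strictly sub-optimal between $\mu_n$ and $\nu_n$. As these are the \emph{only} correspondence plans between the uniform $N$-point measures $\mu_n$ and $\nu_n$ that are supported by a monotone map, no optimal plan between $\mu_n$ and $\nu_n$ is supported by such a map, which yields the conclusion.

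The argument is essentially obstruction-free; the only verifications required are the scale-covariance identity above and the commutation of $S_n$ with monotone rearrangements, both of which are elementary. The conceptual content is simply that the failure of monotonicity in the discrete setting of Beinert can be transported towards the trivially symmetric limit $\delta_0$ without being destroyed, thanks to the scale-invariance of the minimisers of the quadratic GW functional.
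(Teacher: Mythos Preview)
Your argument is correct and gives a clean proof of the proposition. One harmless slip: the scaling factor for the quadratic GW cost under $S_n:z\mapsto z/n$ is $n^{-4}$, not $n^{-8}$ (since $|x-x'|^2$ scales by $n^{-2}$ and you then square the difference). This does not affect anything, as all you need is that the cost scales by a positive constant so that $(S_n,S_n)\push$ induces a bijection between optimal plans.

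Your route is genuinely different from the paper's. The paper does not fix a single Beinert configuration and rescale it; instead it lets the number of points grow with $n$, taking the $n$-point Beinert counter-example with parameter $\epsilon_n=1/n^2$, and then verifies directly that $\W_2^2(\delta_0,\mu_n)=O(2/n+\epsilon_n^2 n^2)\to 0$ (and similarly for $\nu_n$). Your scaling argument is more economical: it needs only one instance of Beinert's construction and exploits the scale-covariance of the GW functional, avoiding any explicit Wasserstein computation. The paper's version, on the other hand, produces sequences whose number of support points diverges, which some readers might find a slightly less degenerate approximation of $\delta_0$; but for the proposition as stated this buys nothing extra.
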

    \begin{proof}
        We consider $\mu=\nu=\delta_0$ and the discrete measures $\mu_n=\frac{1}{n}\sum_{i=1}^n\delta_{x_i}$ and $\nu_n=\frac{1}{n}\sum_{i=1}^n\delta_{y_i}$ defined as follows for $n\geq7$:
        \begin{equation*}
        x_{i}\defeq \begin{cases}
        -1 & \text{for }i=1 \\
        (i-\frac{n+1}{2})\frac1{n^2} & \text{for }i=2,\dots,n-1 \\
        1 & \text{for }i=n
        \end{cases}\quad \text{and}\quad y_{i}\defeq \begin{cases}
        -1 & \text{for }i=1 \\
        -1+\frac1{n^2} & \text{for }i=2 \\
        (i-2)\frac1{n^2} & \text{for }i=3,\dots ,n
        \end{cases}
        \end{equation*}
        which is simply the counter-example from \citeauthor{beinert2022assignment}~\cite{beinert2022assignment} with $n$ points and $\epsilon_n=1/n^2$. Since $n\geq7$, $\epsilon_n<2/(n-3)$ and the identity or anti-identity mappings are not optimal between $\mu_n$ and $\nu_n$.
    By direct computation,
    \begin{equation*}
    \W_2^2(\delta_0,\mu_n)= O(2/n+\epsilon_n^2n^2)\xrightarrow[]{n\to\infty}0,
    \end{equation*}
    and the exact same goes for $\nu=\delta_0$ and $\nu_n$.
    \end{proof}
    One can actually obtain non-degenerate (although not symmetric anymore) examples of such measures $\mu$, $\nu$. We start from the counter-example given in \cite{beinert2022assignment} with $N=7$ points and $\varepsilon=10^{-2}$, that we convolve with a Gaussian measure of standard deviation $\sigma$ as before. We then plot as a function of $m\in[m_\text{min},m_\text{max}]$ the (true) GW cost of a plan $\pi_m\opt$ optimal for the linearized GW problem: $\pi\opt_m\in\argmin_{\pi}\,\langle C_{\gw(m)},\pi\rangle$. The minimum values of this graph are attained by the covariances of optimal correspondence plans, as explained in \Cref{subsec:quadra1D_adversarial}. Hence if $\sigma$ is small, optimal correspondence plans are not monotone by construction and the minima are not located on the boundary of the domain.
    On the contrary, when $\sigma$ is large, 
    a monotone rearrangement is optimal again. To study the phase transition, we plot on \Cref{fig:no-stab} the landscape of $m\mapsto \gw(\pi\opt_m)$ while gradually increasing the value of $\sigma$.

    \begin{figure}[h]
        \centering
        \begin{subfigure}[b]{.24\linewidth}
            \centering
\begin{tikzpicture}[scale=.45]

\begin{axis}[
    xlabel={$m$},
    xmin=-0.194213820111278, xmax=0.194213820111278,
    ymin=-0.290182883647815, ymax=-0.263666083991494,
    grid = major,
    axis lines=left,
    ]
    \addplot[ultra thick, no marks, tabblue]
table {%
-0.155742431435461 -0.275380102915412
-0.153651929000085 -0.275380102915317
-0.15156142656471 -0.275380102915663
-0.149470924129335 -0.275380102987755
-0.147380421693959 -0.275380102911988
-0.145289919258584 -0.275380102915687
-0.143199416823209 -0.275380102915632
-0.141108914387834 -0.27538010291483
-0.139018411952458 -0.275380102911644
-0.136927909517083 -0.275380102912229
-0.134837407081708 -0.275380102911161
-0.132746904646332 -0.275380102911209
-0.130656402210957 -0.275380102913941
-0.128565899775582 -0.27538010291118
-0.126475397340206 -0.275380102928466
-0.124384894904831 -0.275380102912072
-0.122294392469456 -0.275380102958045
-0.12020389003408 -0.275380102977565
-0.118113387598705 -0.275380102915782
-0.11602288516333 -0.275380102924645
-0.113932382727954 -0.275380102937224
-0.111841880292579 -0.275380102924637
-0.109751377857204 -0.275380102921386
-0.107660875421828 -0.274984380875121
-0.105570372986453 -0.27307776242602
-0.103479870551078 -0.267583241731402
-0.101389368115703 -0.264871393066782
-0.0992988656803272 -0.269701906821325
-0.0972083632449519 -0.273747309218643
-0.0951178608095766 -0.276763076781454
-0.0930273583742013 -0.278452821547287
-0.0909368559388259 -0.279813236302683
-0.0888463535034506 -0.280699860365686
-0.0867558510680753 -0.28176530988425
-0.0846653486327 -0.282626579964681
-0.0825748461973247 -0.283285925684313
-0.0804843437619494 -0.284162356477211
-0.0783938413265741 -0.284596226405426
-0.0763033388911987 -0.285460495863862
-0.0742128364558234 -0.285695449385922
-0.0721223340204481 -0.285892615172939
-0.0700318315850728 -0.286140632459021
-0.0679413291496975 -0.286291203951305
-0.0658508267143222 -0.286528382400582
-0.0637603242789468 -0.286661385181496
-0.0616698218435715 -0.286874006233074
-0.0595793194081962 -0.287008310556664
-0.0574888169728209 -0.287203025927852
-0.0553983145374456 -0.28731435847981
-0.0533078121020703 -0.287490792079257
-0.051217309666695 -0.287601645619975
-0.0491268072313197 -0.287759349811789
-0.0470363047959443 -0.287847550132569
-0.044945802360569 -0.287987820154035
-0.0428552999251937 -0.288070168457747
-0.0407647974898184 -0.288188981177386
-0.0386742950544431 -0.288253888132255
-0.0365837926190678 -0.288362513098762
-0.0344932901836924 -0.288504547888517
-0.0324027877483171 -0.288552644712416
-0.0303122853129418 -0.2886241279915
-0.0282217828775665 -0.288666286619716
-0.0261312804421912 -0.288726360913962
-0.0240407780068159 -0.288802376062399
-0.0219502755714406 -0.288830628490637
-0.0198597731360652 -0.288868156612365
-0.0177692707006899 -0.288913102027494
-0.0156787682653146 -0.288926480081151
-0.0135882658299393 -0.288954796580086
-0.011497763394564 -0.288966902554207
-0.00940726095918867 -0.288976148368565
-0.00731675852381336 -0.288977574572528
-0.00522625608843805 -0.288976667376545
-0.00313575365306271 -0.288957787719671
-0.00104525121768739 -0.288923093581993
0.00104525121768792 -0.288923093581994
0.00313575365306323 -0.288957787719671
0.00522625608843855 -0.288976667376545
0.00731675852381386 -0.288977574572528
0.00940726095918917 -0.288976148368565
0.0114977633945645 -0.288966902554208
0.0135882658299398 -0.288954796580086
0.0156787682653151 -0.28892648008115
0.0177692707006904 -0.288913102027494
0.0198597731360657 -0.288868156612365
0.0219502755714411 -0.288830628490636
0.0240407780068164 -0.288802376062399
0.0261312804421917 -0.288726360913963
0.028221782877567 -0.288666286619716
0.0303122853129423 -0.2886241279915
0.0324027877483176 -0.288552644712416
0.034493290183693 -0.288504547888517
0.0365837926190683 -0.288362513098762
0.0386742950544436 -0.288253888132256
0.0407647974898189 -0.288188981177385
0.0428552999251942 -0.288070168457747
0.0449458023605695 -0.287987820154035
0.0470363047959448 -0.287847550132569
0.0491268072313202 -0.287759349811789
0.0512173096666955 -0.287601645619975
0.0533078121020708 -0.287490792079257
0.0553983145374461 -0.28731435847981
0.0574888169728214 -0.287203025927852
0.0595793194081967 -0.287008310556665
0.061669821843572 -0.286874006233074
0.0637603242789474 -0.286661385181495
0.0658508267143227 -0.286528382400582
0.067941329149698 -0.286291203951305
0.0700318315850733 -0.286140632459021
0.0721223340204486 -0.285892615172939
0.0742128364558239 -0.285695449385923
0.0763033388911993 -0.285460495863862
0.0783938413265746 -0.284596226405427
0.0804843437619499 -0.284162356477211
0.0825748461973252 -0.283285925684313
0.0846653486327005 -0.282626579964682
0.0867558510680758 -0.28176530988425
0.0888463535034511 -0.280699860365683
0.0909368559388264 -0.279813236302684
0.0930273583742018 -0.278452821547288
0.0951178608095771 -0.276763076781455
0.0972083632449524 -0.273747309218644
0.0992988656803277 -0.269701906821326
0.101389368115703 -0.264871393066782
0.103479870551078 -0.267583241731403
0.105570372986454 -0.273077762426022
0.107660875421829 -0.274984380875123
0.109751377857204 -0.275380102921387
0.11184188029258 -0.275380102924638
0.113932382727955 -0.275380102937226
0.11602288516333 -0.275380102924646
0.118113387598706 -0.275380102915784
0.120203890034081 -0.275380102977567
0.122294392469456 -0.275380102958047
0.124384894904831 -0.275380102912074
0.126475397340207 -0.275380102928468
0.128565899775582 -0.275380102911182
0.130656402210957 -0.275380102913942
0.132746904646333 -0.275380102911211
0.134837407081708 -0.275380102911163
0.136927909517083 -0.27538010291223
0.139018411952459 -0.275380102911646
0.141108914387834 -0.275380102914831
0.143199416823209 -0.275380102915633
0.145289919258585 -0.275380102915689
0.14738042169396 -0.275380102911989
0.149470924129335 -0.275380102987756
0.151561426564711 -0.275380102915665
0.153651929000086 -0.275380102915319
0.155742431435461 -0.275380102915413
};
\addplot [thick, dashed]
table {%
-0.155742431435461 -0.275380102915412
-0.155742431435461 -0.290182883647815
} node[above right] {$m_\text{min}$};
\addplot [thick, dashed]
table {%
0.155742431435461 -0.275380102915413
0.155742431435461 -0.290182883647815
} node[above left] {$m_\text{max}$};
\addplot[ultra thick, tabblue, only marks, mark=*]
table {%
-0.155742431435461 -0.275380102915412
0.155742431435461 -0.275380102915413
};
\end{axis}
\end{tikzpicture}
            \vspace{-8mm}
            \caption*{$\sigma_1=8.10^{-3}$}
        \end{subfigure}
        \begin{subfigure}[b]{.24\linewidth}
            \centering
\begin{tikzpicture}[scale=.45]

\begin{axis}[
    xlabel={$m$},
    xmin=-0.194213820111278, xmax=0.194213820111278,
    ymin=-0.298275232893912, ymax=-0.279511366424687,
    grid = major,
    axis lines=left,
    ]
    \addplot[ultra thick, no marks, tabblue]
table {%
-0.16323869942782 -0.29420231353058
-0.161047575945567 -0.29420231353281
-0.158856452463314 -0.294202313530255
-0.156665328981062 -0.294202313542228
-0.154474205498809 -0.29420231354487
-0.152283082016557 -0.294202313543036
-0.150091958534304 -0.294202313546145
-0.147900835052051 -0.294202313597041
-0.145709711569799 -0.294202313531916
-0.143518588087546 -0.294202313530501
-0.141327464605293 -0.294202313554674
-0.139136341123041 -0.294202313529766
-0.136945217640788 -0.294202313549925
-0.134754094158536 -0.294202313578277
-0.132562970676283 -0.294202313577799
-0.13037184719403 -0.294202313574221
-0.128180723711778 -0.294202313540318
-0.125989600229525 -0.294202313539246
-0.123798476747272 -0.294202313530204
-0.12160735326502 -0.294202313533479
-0.119416229782767 -0.294202313542627
-0.117225106300515 -0.2942023135335
-0.115033982818262 -0.294202313530682
-0.112842859336009 -0.29420231355232
-0.110651735853757 -0.294202313532388
-0.108460612371504 -0.29420231352961
-0.106269488889252 -0.293662003464201
-0.104078365406999 -0.283835622656074
-0.101887241924746 -0.282329170559471
-0.0996961184424937 -0.280364269446016
-0.0975049949602411 -0.282663909440542
-0.0953138714779885 -0.286618941589105
-0.0931227479957358 -0.287823038417687
-0.0909316245134832 -0.288747284047929
-0.0887405010312306 -0.28940306775179
-0.086549377548978 -0.290045396928863
-0.0843582540667254 -0.290690228706021
-0.0821671305844728 -0.29133998132021
-0.0799760071022201 -0.291660691648057
-0.0777848836199675 -0.292296848159351
-0.0755937601377149 -0.292611398746729
-0.0734026366554623 -0.292928548312571
-0.0712115131732097 -0.293541044594295
-0.0690203896909571 -0.29384092002434
-0.0668292662087044 -0.294140977350911
-0.0646381427264518 -0.294430575669961
-0.0624470192441992 -0.29471422157062
-0.0602558957619466 -0.294992688276769
-0.058064772279694 -0.295263736570535
-0.0558736487974413 -0.295523587445989
-0.0536825253151887 -0.295775806092435
-0.0514914018329361 -0.296028200829168
-0.0493002783506835 -0.296264321705584
-0.0471091548684309 -0.296487481539318
-0.0449180313861783 -0.296535813038205
-0.0427269079039257 -0.29668954933424
-0.040535784421673 -0.296869332477093
-0.0383446609394204 -0.296909103237188
-0.0361535374571678 -0.29702454875784
-0.0339624139749152 -0.297063059685438
-0.0317712904926626 -0.297151951060367
-0.0295801670104099 -0.297248783882658
-0.0273890435281573 -0.297272446193699
-0.0251979200459047 -0.297325150689385
-0.0230067965636521 -0.297339704430846
-0.0208156730813995 -0.297377963155265
-0.0186245495991469 -0.297408687000258
-0.0164334261168942 -0.297413887443075
-0.0142423026346416 -0.297422329872583
-0.012051179152389 -0.29742003337139
-0.0098600556701364 -0.297416842358672
-0.00766893218788378 -0.297390534298154
-0.00547780870563117 -0.297313637251217
-0.00328668522337855 -0.297266341387402
-0.00109556174112593 -0.297192861103505
0.00109556174112671 -0.297192861103506
0.00328668522337933 -0.297266341387402
0.00547780870563194 -0.297313637251217
0.00766893218788456 -0.297390534298153
0.00986005567013717 -0.297416842358671
0.0120511791523898 -0.297420033371391
0.0142423026346424 -0.297422329872583
0.016433426116895 -0.297413887443074
0.0186245495991476 -0.297408687000259
0.0208156730814003 -0.297377963155266
0.0230067965636529 -0.297339704430846
0.0251979200459055 -0.297325150689386
0.0273890435281581 -0.2972724461937
0.0295801670104107 -0.297248783882659
0.0317712904926633 -0.297151951060367
0.033962413974916 -0.297063059685437
0.0361535374571686 -0.29702454875784
0.0383446609394212 -0.296909103237188
0.0405357844216738 -0.296869332477093
0.0427269079039264 -0.296689549334241
0.044918031386179 -0.296535813038206
0.0471091548684316 -0.296487481539318
0.0493002783506843 -0.296264321705586
0.0514914018329369 -0.296028200829168
0.0536825253151895 -0.295775806092436
0.0558736487974421 -0.29552358744599
0.0580647722796948 -0.295263736570536
0.0602558957619474 -0.294992688276769
0.0624470192442 -0.29471422157062
0.0646381427264526 -0.294430575669961
0.0668292662087052 -0.294140977350912
0.0690203896909578 -0.29384092002434
0.0712115131732105 -0.293541044594295
0.0734026366554631 -0.292928548312572
0.0755937601377157 -0.292611398746728
0.0777848836199683 -0.29229684815935
0.0799760071022209 -0.29166069164806
0.0821671305844735 -0.291339981320211
0.0843582540667261 -0.290690228706022
0.0865493775489788 -0.290045396928864
0.0887405010312314 -0.289403067751791
0.090931624513484 -0.288747284047931
0.0931227479957366 -0.28782303841769
0.0953138714779893 -0.286618941589107
0.0975049949602418 -0.282663909440544
0.0996961184424945 -0.280364269446017
0.101887241924747 -0.282329170559473
0.104078365407 -0.283835622656077
0.106269488889252 -0.293662003464205
0.108460612371505 -0.294202313529613
0.110651735853758 -0.29420231353239
0.11284285933601 -0.294202313552323
0.115033982818263 -0.294202313530684
0.117225106300515 -0.294202313533503
0.119416229782768 -0.294202313542629
0.121607353265021 -0.294202313533483
0.123798476747273 -0.294202313530207
0.125989600229526 -0.294202313539249
0.128180723711779 -0.29420231354032
0.130371847194031 -0.294202313574224
0.132562970676284 -0.294202313577802
0.134754094158536 -0.29420231357828
0.136945217640789 -0.294202313549927
0.139136341123042 -0.294202313529769
0.141327464605294 -0.294202313554678
0.143518588087547 -0.294202313530504
0.145709711569799 -0.294202313531918
0.147900835052052 -0.294202313597044
0.150091958534305 -0.294202313546148
0.152283082016557 -0.294202313543038
0.15447420549881 -0.294202313544873
0.156665328981063 -0.29420231354223
0.158856452463315 -0.294202313530257
0.161047575945568 -0.294202313532812
0.16323869942782 -0.294202313530582
};
\addplot [thick, dashed]
table {%
-0.16323869942782 -0.29420231353058
-0.16323869942782 -0.298275232893912
};
\addplot [thick, dashed]
table {%
0.16323869942782 -0.294202313530582
0.16323869942782 -0.298275232893912
};
\addplot[ultra thick, tabblue, only marks, mark=*]
table {%
-0.16323869942782 -0.29420231353058
0.16323869942782 -0.294202313530582
};
\end{axis}

\end{tikzpicture}
            \vspace{-8mm}
            \caption*{$\sigma_2=8.8.10^{-3}$}
        \end{subfigure}
        \begin{subfigure}[b]{.24\linewidth}
            \centering
\begin{tikzpicture}[scale=.45]

\begin{axis}[
    xlabel={$m$},
    xmin=-0.194213820111278, xmax=0.194213820111278,
    ymin=-0.307315740861217, ymax=-0.287759251986567,
    grid = major,
    axis lines=left,
    ]
    \addplot[ultra thick, no marks, tabblue]
table {%
-0.167274033518982 -0.304517718566687
-0.165028744478459 -0.304517718568742
-0.162783455437935 -0.304517718570363
-0.160538166397412 -0.304517718568679
-0.158292877356889 -0.304517718637609
-0.156047588316366 -0.304517718567961
-0.153802299275842 -0.304517718573297
-0.151557010235319 -0.304517718625788
-0.149311721194796 -0.304517718580195
-0.147066432154273 -0.304517718580947
-0.144821143113749 -0.3045177185773
-0.142575854073226 -0.304517718586349
-0.140330565032703 -0.304517718597216
-0.13808527599218 -0.304517718598273
-0.135839986951656 -0.304517718565065
-0.133594697911133 -0.304517718568947
-0.13134940887061 -0.304517718639642
-0.129104119830087 -0.30451771856813
-0.126858830789564 -0.304517718587666
-0.12461354174904 -0.304517718565426
-0.122368252708517 -0.304517718594188
-0.120122963667994 -0.304517718568907
-0.117877674627471 -0.304517718565332
-0.115632385586947 -0.30451771856642
-0.113387096546424 -0.304517718567446
-0.111141807505901 -0.304517718565243
-0.108896518465378 -0.304517718545104
-0.106651229424854 -0.296954358568909
-0.104405940384331 -0.295086143931261
-0.102160651343808 -0.290640125735379
-0.0999153623032845 -0.288557274208142
-0.0976700732627613 -0.289340746876728
-0.095424784222238 -0.292098838299968
-0.0931794951817148 -0.293285863763
-0.0909342061411915 -0.294109239275629
-0.0886889171006683 -0.294839822874232
-0.0864436280601451 -0.295372997474895
-0.0841983390196218 -0.295905292043517
-0.0819530499790986 -0.296441039473682
-0.0797077609385753 -0.296965635697388
-0.0774624718980521 -0.29722601702299
-0.0752171828575288 -0.297745192259393
-0.0729718938170056 -0.29800311597306
-0.0707266047764823 -0.298306263952383
-0.0684813157359591 -0.298742860686472
-0.0662360266954358 -0.298981938238562
-0.0639907376549126 -0.299210806282578
-0.0617454486143894 -0.299433059286501
-0.0595001595738661 -0.299655688711825
-0.0572548705333429 -0.299863500888779
-0.0550095814928196 -0.300063151232501
-0.0527642924522964 -0.300254465410563
-0.0505190034117731 -0.300439775460125
-0.0482737143712499 -0.300609584333982
-0.0460284253307266 -0.300650778686716
-0.0437831362902034 -0.300757744896106
-0.0415378472496801 -0.300886329007365
-0.0392925582091569 -0.30100221947025
-0.0370472691686336 -0.301111586503159
-0.0348019801281104 -0.30121391027529
-0.0325566910875872 -0.301305175367807
-0.0303114020470639 -0.301384993481121
-0.0280661130065407 -0.301490227132168
-0.0258208239660174 -0.301555586045821
-0.0235755349254942 -0.301569823953897
-0.0213302458849709 -0.301591192122879
-0.0190849568444477 -0.301598322595057
-0.0168396678039244 -0.301603028075171
-0.0145943787634012 -0.30159432680194
-0.0123490897228779 -0.301586095549947
-0.0101038006823547 -0.30154924826321
-0.00785851164183143 -0.301491211580021
-0.0056132226013082 -0.301398189251549
-0.00336793356078494 -0.301304339396397
-0.0011226445202617 -0.301198781007987
0.00112264452026153 -0.301198781007986
0.0033679335607848 -0.301304339396397
0.00561322260130803 -0.30139818925155
0.0078585116418313 -0.301491211580021
0.0101038006823545 -0.30154924826321
0.0123490897228778 -0.301586095549947
0.014594378763401 -0.301594326801939
0.0168396678039243 -0.301603028075171
0.0190849568444475 -0.301598322595057
0.0213302458849708 -0.301591192122879
0.023575534925494 -0.301569823953897
0.0258208239660173 -0.301555586045821
0.0280661130065405 -0.301490227132168
0.0303114020470638 -0.301384993481121
0.032556691087587 -0.301305175367807
0.0348019801281103 -0.30121391027529
0.0370472691686335 -0.301111586503159
0.0392925582091567 -0.301002219470249
0.04153784724968 -0.300886329007365
0.0437831362902032 -0.300757744896106
0.0460284253307265 -0.300650778686716
0.0482737143712497 -0.300609584333981
0.050519003411773 -0.300439775460125
0.0527642924522962 -0.300254465410563
0.0550095814928195 -0.300063151232501
0.0572548705333427 -0.299863500888779
0.059500159573866 -0.299655688711825
0.0617454486143892 -0.299433059286501
0.0639907376549124 -0.299210806282578
0.0662360266954357 -0.298981938238561
0.0684813157359589 -0.298742860686472
0.0707266047764822 -0.298306263952382
0.0729718938170054 -0.298003115973061
0.0752171828575287 -0.297745192259393
0.0774624718980519 -0.29722601702299
0.0797077609385752 -0.296965635697387
0.0819530499790984 -0.296441039473682
0.0841983390196217 -0.295905292043516
0.0864436280601449 -0.295372997474895
0.0886889171006681 -0.294839822874232
0.0909342061411914 -0.294109239275629
0.0931794951817147 -0.293285863763004
0.0954247842222379 -0.292098838299968
0.0976700732627611 -0.289340746876729
0.0999153623032844 -0.288557274208142
0.102160651343808 -0.290640125735379
0.104405940384331 -0.295086143931261
0.106651229424854 -0.296954358568909
0.108896518465377 -0.304517718545103
0.111141807505901 -0.304517718565242
0.113387096546424 -0.304517718567445
0.115632385586947 -0.304517718566419
0.11787767462747 -0.304517718565331
0.120122963667994 -0.304517718568907
0.122368252708517 -0.304517718594187
0.12461354174904 -0.304517718565425
0.126858830789563 -0.304517718587666
0.129104119830087 -0.30451771856813
0.13134940887061 -0.304517718639641
0.133594697911133 -0.304517718568946
0.135839986951656 -0.304517718565065
0.13808527599218 -0.304517718598272
0.140330565032703 -0.304517718597214
0.142575854073226 -0.30451771858635
0.144821143113749 -0.3045177185773
0.147066432154273 -0.304517718580947
0.149311721194796 -0.304517718580195
0.151557010235319 -0.304517718625787
0.153802299275842 -0.304517718573296
0.156047588316366 -0.30451771856796
0.158292877356889 -0.304517718637609
0.160538166397412 -0.304517718568679
0.162783455437935 -0.304517718570363
0.165028744478459 -0.304517718568742
0.167274033518982 -0.304517718566687
};
\addplot [thick, dashed]
table {%
-0.167274033518982 -0.304517718566687
-0.167274033518982 -0.307315740861217
};
\addplot [thick, dashed]
table {%
0.167274033518982 -0.304517718566687
0.167274033518982 -0.307315740861217
};
\addplot[ultra thick, tabblue, only marks, mark=*]
table {%
-0.167274033518982 -0.304517718566687
0.167274033518982 -0.304517718566687
};
\end{axis}

\end{tikzpicture}
            \vspace{-8mm}
            \caption*{$\sigma_3=10^{-2}$}
        \end{subfigure}
        \begin{subfigure}[b]{.24\linewidth}
            \centering
\begin{tikzpicture}[scale=.45]

\begin{axis}[
    xlabel={$m$},
    xmin=-0.194213820111278, xmax=0.194213820111278,
    ymin=-0.335194226489427, ymax=-0.304544811107574,
    grid = major,
    axis lines=left,
    ]
    \addplot[ultra thick, no marks, tabblue]
table {%
-0.17655801828298 -0.331891980335707
-0.174188111997303 -0.331891980319862
-0.171818205711625 -0.331891980283592
-0.169448299425948 -0.331891980223302
-0.16707839314027 -0.331891980243792
-0.164708486854592 -0.331891980263956
-0.162338580568915 -0.331891980286217
-0.159968674283237 -0.331891980247376
-0.15759876799756 -0.331891980285448
-0.155228861711882 -0.331891980281795
-0.152858955426204 -0.331891980278678
-0.150489049140527 -0.33189198026967
-0.148119142854849 -0.331891980230003
-0.145749236569172 -0.331891980218381
-0.143379330283494 -0.33189198026904
-0.141009423997816 -0.331891980235271
-0.138639517712139 -0.331891980245376
-0.136269611426461 -0.331891980236535
-0.133899705140784 -0.331891980234786
-0.131529798855106 -0.331891980237326
-0.129159892569429 -0.331891980281328
-0.126789986283751 -0.331891980316321
-0.124420079998073 -0.331891980319116
-0.122050173712396 -0.33189198032223
-0.119680267426718 -0.331891980219255
-0.117310361141041 -0.331891980221669
-0.114940454855363 -0.331891980219858
-0.112570548569685 -0.331891980305998
-0.110200642284008 -0.331891980243904
-0.10783073599833 -0.331287847826922
-0.105460829712653 -0.330572527047089
-0.103090923426975 -0.326160446720041
-0.100721017141297 -0.323476623849002
-0.0983511108556199 -0.318617854744208
-0.0959812045699423 -0.314451563818012
-0.0936112982842647 -0.311780522830264
-0.0912413919985871 -0.307290484313763
-0.0888714857129096 -0.30631739164939
-0.086501579427232 -0.305931684452569
-0.0841316731415544 -0.305847057261295
-0.0817617668558768 -0.306225122056043
-0.0793918605701992 -0.306719707898304
-0.0770219542845216 -0.30745833888048
-0.074652047998844 -0.307963295225527
-0.0722821417131664 -0.308345071579
-0.0699122354274888 -0.308689571387006
-0.0675423291418113 -0.308823636646417
-0.0651724228561337 -0.309090908398651
-0.0628025165704561 -0.309307871553507
-0.0604326102847785 -0.309543536678146
-0.0580627039991009 -0.309783972735727
-0.0556927977134233 -0.30990452907952
-0.0533228914277457 -0.310067206398183
-0.0509529851420681 -0.310221809017009
-0.0485830788563905 -0.31032126586718
-0.046213172570713 -0.310412832980105
-0.0438432662850354 -0.310491619071646
-0.0414733599993578 -0.310595864116286
-0.0391034537136802 -0.310664756134108
-0.0367335474280026 -0.310714651604707
-0.034363641142325 -0.310754824592714
-0.0319937348566474 -0.310772806989539
-0.0296238285709698 -0.31078712506486
-0.0272539222852923 -0.310790032166037
-0.0248840159996147 -0.310778157852204
-0.0225141097139371 -0.310761086218631
-0.0201442034282595 -0.310735123517862
-0.0177742971425819 -0.310694345336652
-0.0154043908569043 -0.310627228156053
-0.0130344845712267 -0.310579712144007
-0.0106645782855491 -0.310501633602625
-0.00829467199987158 -0.310311126112664
-0.00592476571419398 -0.310172326882364
-0.00355485942851638 -0.310074688943368
-0.00118495314283881 -0.309914842104535
0.00118495314283878 -0.309914842104535
0.00355485942851638 -0.310074688943368
0.00592476571419395 -0.310172326882363
0.00829467199987155 -0.310311126112665
0.0106645782855491 -0.310501633602625
0.0130344845712267 -0.310579712144006
0.0154043908569043 -0.310627228156053
0.0177742971425819 -0.310694345336651
0.0201442034282595 -0.310735123517862
0.0225141097139371 -0.310761086218631
0.0248840159996147 -0.310778157852205
0.0272539222852922 -0.310790032166037
0.0296238285709698 -0.31078712506486
0.0319937348566474 -0.310772806989539
0.034363641142325 -0.310754824592714
0.0367335474280026 -0.310714651604707
0.0391034537136802 -0.310664756134108
0.0414733599993578 -0.310595864116285
0.0438432662850354 -0.310491619071646
0.0462131725707129 -0.310412832980105
0.0485830788563905 -0.31032126586718
0.0509529851420681 -0.310221809017009
0.0533228914277457 -0.310067206398182
0.0556927977134233 -0.309904529079519
0.0580627039991009 -0.309783972735727
0.0604326102847785 -0.309543536678146
0.0628025165704561 -0.309307871553507
0.0651724228561336 -0.309090908398651
0.0675423291418112 -0.308823636646417
0.0699122354274888 -0.308689571387006
0.0722821417131664 -0.308345071579
0.074652047998844 -0.307963295225526
0.0770219542845216 -0.307458338880479
0.0793918605701992 -0.306719707898304
0.0817617668558768 -0.306225122056043
0.0841316731415543 -0.305847057261295
0.086501579427232 -0.305931684452569
0.0888714857129095 -0.306317391649391
0.0912413919985871 -0.307290484313762
0.0936112982842647 -0.311780522830264
0.0959812045699423 -0.314451563818012
0.0983511108556199 -0.318617854744208
0.100721017141297 -0.323476623849002
0.103090923426975 -0.326160446720041
0.105460829712653 -0.330572527047089
0.10783073599833 -0.331287847826922
0.110200642284008 -0.331891980243903
0.112570548569685 -0.331891980305998
0.114940454855363 -0.331891980219857
0.117310361141041 -0.33189198022167
0.119680267426718 -0.331891980219255
0.122050173712396 -0.331891980322229
0.124420079998073 -0.331891980319116
0.126789986283751 -0.331891980316321
0.129159892569429 -0.331891980281327
0.131529798855106 -0.331891980237326
0.133899705140784 -0.331891980234787
0.136269611426461 -0.331891980236535
0.138639517712139 -0.331891980245377
0.141009423997816 -0.331891980235271
0.143379330283494 -0.331891980269039
0.145749236569172 -0.33189198021838
0.148119142854849 -0.331891980230003
0.150489049140527 -0.33189198026967
0.152858955426204 -0.331891980278678
0.155228861711882 -0.331891980281795
0.15759876799756 -0.331891980285447
0.159968674283237 -0.331891980247376
0.162338580568915 -0.331891980286216
0.164708486854592 -0.331891980263955
0.16707839314027 -0.331891980243793
0.169448299425948 -0.331891980223302
0.171818205711625 -0.331891980283591
0.174188111997303 -0.331891980319861
0.17655801828298 -0.331891980335706
};
\addplot [thick, dashed]
table {%
-0.17655801828298 -0.331891980335707
-0.17655801828298 -0.335194226489427
};
\addplot [thick, dashed]
table {%
0.17655801828298 -0.331891980335706
0.17655801828298 -0.335194226489427
};
\addplot[ultra thick, tabblue, only marks, mark=*]
table {%
-0.17655801828298 -0.331891980335707
0.17655801828298 -0.331891980335706
};
\end{axis}

\end{tikzpicture}
            \vspace{-8mm}
            \caption*{$\sigma_4=3.10^{-2}$}
        \end{subfigure}
        \caption{Evolution of the graph of $m\mapsto \gw(\pi\opt_m)$ when making $\sigma$ vary on the counter-example of \citeauthor{beinert2022assignment}~\cite{beinert2022assignment} with $N=7$ points and $\varepsilon=10^{-2}$. Parameters: $N_{x}=100$, $N_{m}=150$.}
        \label{fig:no-stab}
    \end{figure}

    Looking at \cref{fig:no-stab}, it is worth noting that there is an incentive for optimal plans at covariances close to $m_\text{min}$ or $m_\text{max}$ to be the monotone rearrangements, as the horizontal portions of the plot suggest. More importantly, it can be observed that when $\sigma=\sigma_3$ or $\sigma_4$, the monotone rearrangements are optimal, as their covariances realize the minimum of $m\mapsto \gw(\pi\opt_m)$; unlike for $\sigma_1$ and $\sigma_2$, for which the minimum value of the plot is located near zero. Hence there should exist a $\sigma_0\in(\sigma_2,\sigma_3)$ for which the convolved measures have both $\pimon$, $\piantimon$ and another $\pi_0$ as optimal correspondence plans; it is direct that property $P$ does not hold in the neighborhood of these specific measures $\mu_0$ and $\nu_0$.

\subsubsection{A positive result for measures with two components}
\label{subsec:quadra_1D_positive}
In the following, $\mu_1$, $\mu_2$, $\nu_1$ and $\nu_2$ are four probability measures supported on a compact interval $A \subset \RR$. Denote $\Delta \defeq \diam(A)$, and fix $t \in (0,1)$ and $K > \Delta$. Let $\tau_K : x \mapsto x + K$ denote the translation by $K$, and $A+K = \tau_K(A) = \{x + K\mid x \in A\}$. Now, introduce the measures
\begin{equation*}
 \mu \defeq (1-t) \mu_1 + t \tau_{K\pushonly} \mu_2 \qquad \text{and}  \qquad \nu \defeq (1-t) \nu_1 + t \tau_{K\pushonly} \nu_2.
 \end{equation*}
Note that $\mu_1$ and $\tau_{K\pushonly} \mu_2$ have disjoint supports, and same for $\nu_1$ and $\tau_{K\pushonly}\nu_2$. We have the following result.

\begin{proposition}[Optimality of monotone rearrangements for measures with two components]
    \label{prop:measure-separation}
    For $K$ large enough, the unique optimal plan for the squared distance cost between  $\mu$ and $\nu$ is given by one of the two monotone rearrangements (non-decreasing or non-increasing).
\end{proposition}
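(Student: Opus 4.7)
The plan is to analyze the Gromov--Wasserstein functional in the regime $K \to \infty$, showing that any transport of mass between the two well-separated components of the supports of $\mu$ and $\nu$ incurs a cost of order $K^4$, while monotone rearrangements cost only $O(K^2)$. This will pin down the optimizer to be one of the two monotone rearrangements for $K$ sufficiently large.

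First, any $\pi \in \Pi(\mu,\nu)$ decomposes into four blocks $\pi = \pi_{LL} + \pi_{LR} + \pi_{RL} + \pi_{RR}$ according to whether source and target lie in $A$ (L) or in $A+K$ (R). Marginal conservation forces $\sigma \defeq \pi_{LR}(\RR^2) = \pi_{RL}(\RR^2) \in [0, \min(t,1-t)]$, while $\pi_{LL}, \pi_{RR}$ have masses $1-t-\sigma$ and $t-\sigma$. Writing $x = \tilde x + I_x K$ with $\tilde x \in A$ and $I_x \in \{0,1\}$ (and analogously for $x', y, y'$), the identity $(x-x')^2 = K^2 \one(I_x \neq I_{x'}) + O(K)$ allows expanding $F(\pi,\pi)$ in powers of $K$. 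A direct integration, using $P(I_x \neq I_{x'}) = 2t(1-t)$ and $P(I_x \neq I_{x'},\, I_y \neq I_{y'}) = 2(1-t-\sigma)(t-\sigma) + 2\sigma^2$, yields
\[
F(\pi,\pi) = 4\sigma(1-2\sigma)\, K^4 + R(\pi, K), \qquad |R(\pi,K)| \leq C(t,\Delta)\, K^3.
\]
The non-decreasing rearrangement $\pimon$ satisfies $\sigma = 0$ (the left parts of $\mu$ and $\nu$ have matching mass $1-t$), and $\piantimon$ satisfies $\sigma = \min(t,1-t)$, which equals $1/2$ precisely when $t = 1/2$. Both monotone rearrangements therefore annihilate the $K^4$ term (simultaneously only when $t = 1/2$), while any other plan with $\sigma \in (0,1/2)$ has a strictly positive $K^4$ coefficient. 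Consequently, for $K$ larger than some threshold $K_0$, every optimizer must have $\sigma = 0$ or, when $t = 1/2$, possibly $\sigma = 1/2$.

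Second, I would analyze the residual problem. On the set $\{\sigma = 0\}$, $\pi = \pi_L + \pi_R$ with $\pi_L \in \Pi((1-t)\mu_1,(1-t)\nu_1)$ and $\pi_R \in \Pi(t\tau_{K\pushonly}\mu_2, t\tau_{K\pushonly}\nu_2)$. Factorizing $(x-x')^2 - (y-y')^2 = \bigl[(x-y) - (\tilde x' - \tilde y')\bigr]\bigl[(x+y) - (\tilde x' + \tilde y') - 2K\bigr]$ on $\pi_L \otimes \pi_R$ pairs, and noting that $F(\pi_L, \pi_L)$ and $F(\pi_R, \pi_R)$ are $O(1)$, one obtains the refined expansion $F(\pi,\pi) = 8 K^2 H(\pi_L,\pi_R') + O(K)$, where $\pi_R' \defeq (\tau_{-K}, \tau_{-K})\push \pi_R$ and
\[
H(\pi_L,\pi_R') = t\!\int (x-y)^2 d\pi_L + (1-t)\!\int (x'-y')^2 d\pi_R' - 2\Bigl[\int (x-y)\,d\pi_L\Bigr]\Bigl[\int (x'-y')\,d\pi_R'\Bigr].
\]
The cross term depends only on the means of $\mu_i, \nu_i$ and is therefore constant; the two remaining integrals are classical one-dimensional $L^2$ transport costs, each uniquely minimized by the monotone non-decreasing coupling (Brenier in one dimension). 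Gluing these sub-couplings recovers $\pimon$. An analogous computation in the $\sigma = 1/2$ case (when $t = 1/2$) singles out $\piantimon$, and for $K$ large the unique minimizer is whichever of $\pimon,\piantimon$ achieves the smaller $K^2$ coefficient.

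The main obstacle will be to turn this leading-order analysis into a rigorous strict inequality: the bounded remainders $R$ and $O(K)$ must not overwhelm the leading-order gaps. This requires a uniform lower bound on $4\sigma(1-2\sigma)$ for $\sigma$ bounded away from $\{0,1/2\}$ (automatic on the compact admissible interval), and a uniform gap in the 1D quadratic OT sub-problem between the monotone coupling and any competitor, provided by strict convexity of the quadratic cost on $A$ together with the uniqueness of the 1D monotone rearrangement. Combining these ingredients should yield the strict inequality for $K \geq K_0$ and the asserted uniqueness.
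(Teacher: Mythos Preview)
Your two-step outline and your leading-order computations (the $K^4$ coefficient $4\sigma(1-2\sigma)$, and the $K^2$ coefficient $8H$ when $\sigma=0$) are correct and parallel the paper's structure. The gap is in how you pass from leading order to an exact conclusion: neither of your proposed closing ingredients works. In Step~1, comparing with $\pimon$ only gives $\sigma(1-2\sigma)=O(1/K)$ for an optimizer, and a ``uniform lower bound on $4\sigma(1-2\sigma)$ for $\sigma$ bounded away from $\{0,1/2\}$'' says nothing precisely in the regime $\sigma\to 0^+$ you must exclude. In Step~2, there is no ``uniform gap in the 1D quadratic OT sub-problem'': competitors $\pi_L$ can make $\int(x-y)^2\,d\pi_L$ arbitrarily close to its minimum, while the $O(K)$ remainder involves other moments of $\pi_L$ and is not controlled by that difference.

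The paper closes both gaps with different devices. For Step~1 it compares $\pi$ not with $\pimon$ but with a competitor $\tilde\pi$ built from $\pi$ itself: keep $\pi_{LL},\pi_{RR}$ and replace $\pi_{LR},\pi_{RL}$ by any couplings of their (equal-mass) block marginals. Because $\tilde\pi$ shares the fine block structure of $\pi$, every term in $F(\pi,\pi)-F(\tilde\pi,\tilde\pi)$ carries a factor of the off-diagonal mass, yielding a lower bound $\sigma\cdot[\,c\,K^4-O(K^2)\,]$ that is strictly positive for all $\sigma>0$ once $K$ is large; hence $\sigma=0$ exactly. For Step~2 it uses the first-order condition for the quadratic functional: at an optimum $(\pi_1^\star,\pi_2^\star)$, the block $\pi_1^\star$ also solves the \emph{linear} OT problem with cost $c(x,y)=\int((x-x')^2-(y-y')^2)^2\,d(\pi_1^\star+\pi_2^\star)(x',y')$, and a direct computation gives $\partial_{xy}c=-\,\text{const}\cdot K^2+O(K)<0$ on $A\times A$ for $K$ large. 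Strict submodularity then forces $\pi_1^\star$ to be the non-decreasing coupling exactly --- no leading-order comparison is needed. (Incidentally, $\piantimon$ has $\sigma=\min(t,1-t)$, hence a strictly positive $K^4$ coefficient whenever $t\neq\tfrac12$; only $\pimon$ always annihilates that term, contrary to your sentence ``Both monotone rearrangements therefore annihilate the $K^4$ term''.)
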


\begin{remark}[Long-range correspondences]
Conditions of \cref{prop:measure-separation} illustrate that monotone rearrangements are favored when $\mu$ and $\nu$ both contain one or more outliers.
The proof actually shows the importance of long-range correspondences or global effects over the local correspondences on the plan. In other words, even though monotone rearrangements may not be optimal locally, global correspondences favor them. Moreover, these global correspondences have proportionally more weight in the GW functional since the cost is the squared difference of the squared distances. In conclusion, pairs of points that are at long distances tend to be put in correspondence. In turn, this correspondence, as shown in the proof, favors monotone rearrangements. This argument gives some insight into the fact that a monotone rearrangement is often optimal and it is made quantitative by \cref{prop:measure-separation}.
\end{remark}
\noindent We first prove the following lemma:
\begin{lemma}
\label{lemma:measure-separation}
In the setting described above, there exists $K_0>0$ such that if $K \geq K_0$,
every $\pi$ optimal plan for $\gw(\mu,\nu)$ can be decomposed as $\pi = \pi_1 + \pi_2$, where either:
\begin{enumerate}[(i)]
    \item $\pi_1$ is supported on $A \times A$ and $\pi_2$ on $(A + K) \times (A + K)$ (that is, we separately transport $\mu_1$ to $\nu_1$ and $\tau_{K\pushonly}\mu_2$ to $\tau_{K\pushonly}\nu_2$), or
    \item $\pi_1$ is supported on $A \times (A+K)$ and $\pi_2$ on $A \times (A + K)$ (that is, we separately transport $\mu_1$ to $\tau_{K\pushonly}\nu_2$ and $\mu_2$ to $\tau_{K\pushonly}\nu_1$).
\end{enumerate}
Furthermore, whenever $t \neq \frac{1}{2}$, only (i) can occur.
\end{lemma}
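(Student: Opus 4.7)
The strategy is to decompose every candidate plan $\pi$ according to the four ``blocks'' induced by the two-component supports of $\mu$ and $\nu$, parameterize $\pi$ by a single scalar $b\in[0,\min(t,1-t)]$, and then show via explicit ``swap'' perturbations that for $K$ large enough, optimal plans must have all their mass either in the two diagonal blocks (case 1) or in the two off-diagonal blocks (case 2, only admissible when $t=1/2$). Concretely, for $\pi\in\Pi(\mu,\nu)$, write $\pi=\pi_{11}+\pi_{12}+\pi_{21}+\pi_{22}$ where $\pi_{ij}$ is the restriction of $\pi$ to $B_{11}\defeq A\times A$, $B_{12}\defeq A\times(A+K)$, $B_{21}\defeq(A+K)\times A$, $B_{22}\defeq(A+K)\times(A+K)$, with respective masses $a,b,c,d$. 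The marginal conditions $a+b=1-t=a+c$ and $c+d=t=b+d$ force $b=c$, $a=1-t-b$, $d=t-b$, so the lemma reduces to proving that every optimal $\pi^*$ satisfies $b(\pi^*)\in\{0,1/2\}$, the value $1/2$ being feasible only if $t=1/2$.

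Assume toward a contradiction that $\pi^*$ is optimal with $b^*\defeq b(\pi^*)\in(0,1/2)$. Define a \emph{forward swap} $\tilde\pi$ by keeping $\pi^*|_{B_{11}\cup B_{22}}$ and replacing $\pi^*|_{B_{12}\cup B_{21}}$ with two new couplings $\gamma_1$ on $A\times A$ and $\gamma_2$ on $(A+K)\times(A+K)$: $\gamma_1$ pairs the $x$-marginal of $\pi^*|_{B_{12}}$ with the $y$-marginal of $\pi^*|_{B_{21}}$ (both supported on $A$, of mass $b^*$), and $\gamma_2$ pairs the remaining two marginals. By construction $\tilde\pi\in\Pi(\mu,\nu)$, $b(\tilde\pi)=0$, and $\alpha\defeq\tilde\pi-\pi^*$ is a signed measure with null marginals.

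Cost comparison uses the expansion $F(\tilde\pi,\tilde\pi)-F(\pi^*,\pi^*)=2\int C_{\pi^*}\,d\alpha + \iint k\,d\alpha\otimes d\alpha$ together with the identity $\iint k\,d\alpha\otimes d\alpha=-8\bigl(\int xy\,d\alpha\bigr)^2$ recalled from the introduction. Translating so that $A$ is centered at the origin (allowed by the translation invariance of the quadratic GW functional in each marginal), a block-by-block estimate shows, for $K$ large compared to $\Delta$, that $C_{\pi^*}(x,y)=2K^4 b^*+O(K^3\Delta)$ on $B_{11}\cup B_{22}$, $C_{\pi^*}(x,y)=K^4(1-2b^*)+O(K^3\Delta)$ on $B_{12}\cup B_{21}$, and $\int xy\,d\alpha=b^*K^2+O(b^*K\Delta)$. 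Substituting and simplifying, the linear term $2\int C_{\pi^*}\,d\alpha = 4b^*K^4(4b^*-1)+O(b^*K^3\Delta)$ combines with the quadratic term $-8(b^*)^2K^4+O((b^*)^2K^3\Delta)$ to produce, after cancellation,
\[
F(\tilde\pi,\tilde\pi)-F(\pi^*,\pi^*)=-4 b^*(1-2b^*)K^4 + O(b^* K^3\Delta),
\]
which is strictly negative once $K$ is large enough to make the leading term dominate, contradicting optimality of $\pi^*$.

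When $t\neq 1/2$, $b^*\leq\min(t,1-t)<1/2$ stays bounded away from $1/2$, so the forward swap alone forces $b^*=0$, giving case 1. When $t=1/2$, $b^*$ may approach $1/2$ and the forward swap loses strength there; I complement it with a symmetric \emph{reverse swap} that moves mass from $B_{11}\cup B_{22}$ to $B_{12}\cup B_{21}$, whose entirely analogous computation yields the same leading change $-4b^*(1-2b^*)K^4$ but with error $O((1/2-b^*)K^3\Delta)$. Using the forward swap for $b^*\leq 1/4$ and the reverse swap for $b^*\geq 1/4$ rules out all of $(0,1/2)$ under a single threshold $K_0$ depending only on $\Delta$ and the marginals, leaving $b^*\in\{0,1/2\}$; the value $1/2$ requires $a=d=0$ and hence $t=1/2$, yielding case 2. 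The main obstacle is precisely this uniform control of the error term relative to the leading factor $b^*(1-2b^*)$, which vanishes at both endpoints of $[0,1/2]$ and therefore forces the two-swap strategy rather than a single perturbation argument.
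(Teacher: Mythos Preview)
Your proof is correct, and the competitor you build (the ``forward swap'') is exactly the paper's $\tilde\pi=\pi_{11}+\tilde\pi_{11}+\pi_{22}+\tilde\pi_{22}$, so the overall strategy coincides. The difference is in how the cost gap is estimated. The paper expands $\gw(\pi,\pi)$ and $\gw(\tilde\pi,\tilde\pi)$ bilinearly into the block terms $\gw(\pi_{ij},\pi_{i'j'})$ and bounds each one crudely, arriving at $\gw(\pi,\pi)-\gw(\tilde\pi,\tilde\pi)\geq r\big(8(1/2-r)(K^2-\Delta^2)^2-4(1/2-r)\Delta^2-2(2K\Delta+\Delta^2)^2\big)$. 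You instead use the second-order expansion $F(\tilde\pi,\tilde\pi)-F(\pi^*,\pi^*)=2\int C_{\pi^*}\dd\alpha+F(\alpha,\alpha)$ together with the null-marginal identity $F(\alpha,\alpha)=-8\big(\int xy\dd\alpha\big)^2$ established in the introduction, which yields the cleaner closed form $-4b^*(1-2b^*)K^4+O(b^*K^3\Delta)$. Your two-swap device for $t=1/2$ is the analytic counterpart of the paper's one-line symmetry reduction (reflect the second variable around the midpoint of $A\cup(A+K)$, which swaps diagonal and off-diagonal blocks and replaces $r$ by $1/2-r$): both mechanisms reduce to the regime $b^*\le 1/4$ where the error is controlled uniformly. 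Your route leverages more of the quadratic-GW algebra already developed in the paper and produces a sharper leading constant; the paper's route is more self-contained and avoids tracking the signed measure $\alpha$.
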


    \begin{figure}[h]
        \centering
        \begin{tikzpicture}[line cap=round,line join=round,scale=0.7]
    \def\margin{.5}
    \def\K{7}
    \def\diaml{4}
    \def\height{1.5}
    \def\vertical{4}
    \def\spacearrow{.4}
    \def\borellength{.75}
    \def\shade{15}
    \def\ellip{1}
    \def\spacesq{1.2}
    
    \filldraw [fill=black!5, draw=none] (0,0) rectangle (\diaml,\diaml);
    \filldraw [fill=black!5, draw=none] (\K,\K) rectangle (\K+\diaml,\K+\diaml);
    \draw[tabred,fill=tabred!\shade]
        (0,0)..controls +(-70:\diaml*0.1) and +(180:\diaml*0.1)..
        (\diaml/5,-0.5*\height)..controls +(0:\diaml*0.1) and +(180:\diaml*0.1)..
        (2*\diaml/5,-0.5*\height)..controls +(0:\diaml*0.1) and +(180:\diaml*0.2)..
        (3*\diaml/4,-1*\height)..controls +(0:\diaml*0.2) and +(-110:\diaml*0.1)..
        (\diaml,0);
    \draw[tabred,fill=tabred!\shade,xshift=\K cm]
        (0,0)..controls +(-70:\diaml*0.1) and +(180:\diaml*0.3)..
        (\diaml/3,-.75*\height)..controls +(0:\diaml*0.3) and +(-140:\diaml*0.3)..
        (\diaml,0);
    \draw node[tabred,below] at (\K/2+\diaml/2,0) {$\mu$};
    \draw node[tabred,below right] at (.2,0) {$\mu_1$};
    \draw node[tabred,below right] at (.2+\K,0) {$\tau_{K\pushonly}\mu_2$};
    \draw[tabblue,fill=tabblue!\shade]
        (0,0)..controls +(160:\diaml*0.1) and +(-90:\diaml*0.1)..
        (-1*\height,\diaml/4)..controls +(90:\diaml*0.1) and +(-90:\diaml*0.1)..
        (-0.5*\height,\diaml/2)..controls +(90:\diaml*0.1) and +(-90:\diaml*0.1)..
        (-2/3*\height,3*\diaml/4)..controls +(90:\diaml*0.1) and +(200:\diaml*0.1)..
        (0,\diaml);
    \draw[tabblue,fill=tabblue!\shade,yshift=\K cm]
        (0,0)..controls +(160:\diaml*0.1) and +(-90:\diaml*0.1)..
        (-4/5*\height,\diaml/4)..controls +(90:\diaml*0.1) and +(-90:\diaml*0.1)..
        (-0.4*\height,\diaml/2)..controls +(90:\diaml*0.1) and +(-90:\diaml*0.1)..
        (-4/5*\height,3*\diaml/4)..controls +(90:\diaml*0.1) and +(200:\diaml*0.1)..
        (0,\diaml);
    \draw node[tabblue,left] at (0,\K/2+\diaml/2) {$\nu$};
    \draw node[tabblue,above right,rotate=90] at (0,0.2) {$\nu_1$};
    \draw node[tabblue,above right,rotate=90] at (0,0.2+\K) {$\tau_{K\pushonly}\nu_2$};
    \draw (0,0-\margin) -- (0,\K/2+\diaml/2-\ellip);
    \draw[dashed] (0,\K/2+\diaml/2-\ellip) -- (0,\K/2+\diaml/2+\ellip);
    \draw[->] (0,\K/2+\diaml/2+\ellip) -- (0,\K+\diaml+\margin);
    \draw node[above right] at (0,\K+\diaml+\margin) {$\mathcal{Y}$};
    \draw (0-\margin,0) -- (\K/2+\diaml/2-\ellip,0);
    \draw[dashed] (\K/2+\diaml/2-\ellip,0) -- (\K/2+\diaml/2+\ellip,0);
    \draw[->] (\K/2+\diaml/2+\ellip,0) -- (\K+\diaml+\margin,0);
    \draw node[above right] at (\K +\diaml+\margin,0) {$\mathcal{X}$};
    \draw[<->,opacity=1,dashed] (-\height-\spacearrow,0) -- (-\height-\spacearrow,\diaml) node[midway,above,rotate=90] {$\Delta=\diam(A)$};
    \draw[<->,opacity=1,dashed] (-\height-3*\spacearrow,\diaml/2) -- (-\height-3*\spacearrow,\K+\diaml/2) node[midway,above,rotate=90] {$K$};
    \draw[opacity=1,dotted] (\diaml,0) -- (\diaml,\K+\diaml);
    \draw[opacity=1,dotted] (\K,0) -- (\K,\K+\diaml);
    \draw[opacity=1,dotted] (0,\diaml) -- (\K+\diaml,\diaml);
    \draw[opacity=1,dotted] (0,\K) -- (\K+\diaml,\K);
    \draw (\diaml/2,\diaml/2) node (pi11) {$\pi_{11}$};
    \draw (\diaml/2+\K,\diaml/2) node (pi21) {$\pi_{21}$};
    \draw (\diaml/2,\diaml/2+\K) node (pi12) {$\pi_{12}$};
    \draw (\diaml/2+\K,\diaml/2+\K) node (pi22) {$\pi_{22}$};
    \draw[->] (\K+\diaml/2-\spacesq,\diaml/2) -- (\diaml/2+\spacesq,\diaml/2) node[midway,above] {$\tilde\pi_{11}$};
    \draw[->] (\diaml/2,\K+\diaml/2-\spacesq) -- (\diaml/2,\diaml/2+\spacesq) node[midway,right] {$\tilde\pi_{11}$};
    \draw[->] (\diaml/2+\K,\diaml/2+\spacesq) -- (\diaml/2+\K,\K+\diaml/2-\spacesq) node[midway,left] {$\tilde\pi_{22}$};
    \draw[->] (\diaml/2+\spacesq,\K+\diaml/2) -- (\K+\diaml/2-\spacesq,\K+\diaml/2) node[midway,above] {$\tilde\pi_{22}$};
\end{tikzpicture}
        \caption{Visual sketch of the proof of \cref{lemma:measure-separation}.}
        \label{FigPositiveResult}
    \end{figure}

\begin{proof}
Consider first the case $t = 1/2$.
To alleviate notation, we introduce $A_1 \defeq A$ and $A_2 \defeq A+K$. We can now decompose any plan $\pi$ as $\pi_{11} + \pi_{12} + \pi_{21} + \pi_{22}$ where $\pi_{ij}$ denotes the restriction of the plan $\pi$ to the product $A_i \times A_j$, for $i,j\in\{1,2\}$. Let us also denote by $r$ the mass of $\pi_{12}$. One has $0 \leq r\leq 1/2$ and by symmetry, one can choose $r\leq 1/4$---otherwise, we switch the roles of $A_1$ and $A_2$ for $\nu$ since the cost is invariant by isometries. Remark that, due to marginal constraints, the total mass of $\pi_{11}$ and $\pi_{22}$ is $1/2 - r$ and the mass of $\pi_{21}$ is $r$. Therefore, it is possible to consider a transport plan $\tilde \pi_{11}$ between the first marginal of $\pi_{12}$ and the second marginal of $\pi_{21}$. Similarly, let $\tilde \pi_{22}$ be a transport plan between the first marginal of $\pi_{21}$ and the second marginal of $\pi_{12}$. We then define a competitor plan
$\tilde \pi  \defeq \pi_{11} + \tilde \pi_{11} + \pi_{22} + \tilde \pi_{22}$.
Slightly overloading notation, we introduce
\begin{equation}\label{EqBilinearGW}
\gw(\pi,\gamma) \defeq \int c \dd \pi \otimes \gamma.
\end{equation}
The first step is to get a lower bound on the term $\gw(\pi,\pi)$.
We expand it by bilinearity:
\begin{equation*}
\gw(\pi,\pi) = \sum_{i,j,i',j'}  \gw(\pi_{ij},\pi_{i'j'}) = \sum_{i,j} \gw(\pi_{ii},\pi_{jj}) + R,
\end{equation*}
where $R$ is the remainder that contains $12$ terms from which one can identify two types. (i) $8$ terms are of the type $\gw(\pi_{12},\pi_{11}) \geq r(1/2-r)(K^2 - \Delta^2)^2$. Indeed, since one compares pairs of points $(x,x')$ and $(y,y')$ satisfying $(x,y) \in A_1 \times A_1$ and $(x',y') \in A_1 \times A_2$, $(x - x')^2$ is upper-bounded by $\Delta^2$ and $(y - y')^2$ lower-bounded by $K^2$. The bound above then follows after integrating.
(ii) The $4$ remaining terms are of the type
$\gw(\pi_{12},\pi_{21}) \geq 0$. Summing everything yields $R \geq 8 r(1/2-r)(K^2 - \Delta^2)^2$.
We now upper-bound the competitor. Similarly, one has
\begin{equation*}
 \gw(\tilde\pi,\tilde\pi)  = \smash{\sum_{i,j}} \gw(\pi_{ii},\pi_{jj}) + \tilde R\,
\end{equation*}
where
\begin{equation*}
\tilde R = 2\gw(\tilde \pi_{11},\pi_{22} + \tilde \pi_{22}) +2 \gw(\tilde \pi_{22},\pi_{11} + \tilde \pi_{11}) + 2\gw(\pi_{11},\tilde \pi_{11}) + 2\gw(\pi_{22},\tilde \pi_{22}).
\end{equation*}
The last two terms are upper-bounded by $2r(1/2-r) \Delta^2$, since one compares squared distances within $A_1$.
By elementary inequalities again (see \cref{FigPositiveResult}), the first two terms can be upper-bounded by $r(2K\Delta + \Delta^2)^2$. Note that the total mass of the plan $\pi_{11} + \tilde \pi_{11}$ is $1/2$, which explains why $(1/2-r)$ does not appear.
Therefore, the difference between the two values of GW is
\begin{equation}\label{EqComparison}
    \gw(\pi,\pi) - \gw(\tilde \pi,\tilde \pi) \geq r\left(8 (1/2-r)(K^2 - \Delta^2)^2 - 4 (1/2-r) \Delta^2  - 2(2K\Delta + \Delta^2)^2\right).
\end{equation}
Then, since $1/2 - r\geq 1/4$, the polynomial function in $K$ on the right-hand side of \Cref{EqComparison} tends to $+\infty$ uniformly in $r\in [0,\frac 14]$ when $K$ goes to infinity, and the result follows: there exists $K>0$ such that the polynomial function above is non-negative, for instance, $K\defeq\max(0,K_0)$ where $K_0$ is the largest root of the polynomial.

The proof in the case $t > 1/2$ is even simpler: since $t- r > t-1/2$, there is no choice in the matching of the two measures and it is determined by the corresponding masses. One can then directly apply the argument above. The same reasoning applies to the case $t < 1/2$.
\end{proof}

\noindent We now prove \cref{prop:measure-separation}.

\begin{proof}[Proof of \cref{prop:measure-separation}]
Thanks to \cref{lemma:measure-separation}, we know that we can restrict to transportation plans $\pi = \pi_1 + \pi_2$ where, up to flipping $\nu$, we can assume that $\pi_1$ is supported on $A \times A$ and $\pi_2$ on $(A+K) \times (A+K)$.\footnote{Note: this is where the choice is made, as in the proof of \cref{lemma:measure-separation}, between the non-decreasing and non-increasing rearrangements. Using this convention, the non-decreasing rearrangement is shown to be optimal.}
Using again the bilinear form $\gw(\pi,\gamma)$ defined in \cref{EqBilinearGW},
the objective value reached by any transport plan $\pi = \pi_1 + \pi_2$ actually decomposes as
\begin{equation*}
\gw(\pi,\pi) = \gw(\pi_1,\pi_1) + 2\gw(\pi_1,\pi_2) + \gw(\pi_2,\pi_2).
\end{equation*}
Now, assume that we found $\pi_2\opt$ optimal. Let us minimize in $\pi_1$ the resulting quadratic problem:
\begin{equation*}
\min_{\pi_1}\ \gw(\pi_1,\pi_1) + 2\gw(\pi_1,\pi_2\opt).
\end{equation*}
We know that if $\pi_1\opt$ is a minimizer of this quantity, it must also be a solution of the \emph{linear} problem
\begin{equation*}
\min_{\pi_1}\ \gw(\pi_1, \pi_1\opt) + \gw(\pi_1,\pi_2\opt),
\end{equation*}
which is exactly the optimal transportation problem for the cost
\begin{multline*}
c(x,y) = \int_{A\times A} \big((x-x')^2 - (y-y')^2\big)^2 \dd \pi_1\opt(x',y') + 2 \int_{(A+K)^2} \big((x-x'')^2 - (y-y'')^2\big)^2 \dd \pi_2\opt(x'',y'') .
\end{multline*}
Now, using the relation $\big((x-x'')^2 - (y-y'')^2\big)^2 = \big((x-y)-(x''-y'')\big)^2 \big((x+y) - (x''+y'')\big)^2$ and the fact that $\pi_2\opt$ is a transportation plan between $\tau_{K\pushonly}\mu_2$ and $\tau_{K\pushonly} \nu_2$ so that we can make a change of variable, observe that
\begin{multline*}
    c(x,y) = \int_{A\times A} \big((x-x')^2 - (y-y')^2\big)^2 \dd \pi_1\opt(x',y') \\+ \int_{A \times A} \big((x-y)-(x''-y'')\big)^2 \big((x+y) - (x'' + y'' +2K)\big)^2 \dd (\tau_{-K},\tau_{-K})\push\pi_2\opt(x'',y'').
\end{multline*}
Now, observe that $\partial_{xy} c(x,y)$ is a polynomial function in $K,x,y$ whose dominant term in $K$ is simply $-2K^2$, and recall that $A$ is compact, so that this polynomial function is bounded in $(x,y)$. We conclude that for all $(x,y) \in A\times A$,
\begin{equation*}
\partial_{xy} c(x,y) = -2 K^2 + O(K) < 0 \qquad \text{for $K$ large enough.}
\end{equation*}
The plan $\pi_1\opt$ is optimal for a submodular cost, and by \cref{prop:submod} must be the non-decreasing rearrangement between $\mu_1$ and $\nu_1$. By symmetry, so is $\pi_2\opt$.
\end{proof}

\addtocontents{toc}{\protect\setcounter{tocdepth}{0}}
\section*{Acknowledgements}
\addtocontents{toc}{\protect\setcounter{tocdepth}{1}}
We thank Facundo Mémoli and Gabriel Peyré for their helpful remarks. We also thank the anonymous reviewers who substantially helped us improve the clarity of the paper. 
This work was supported by the Bézout Labex (New Monge Problems), funded by ANR, reference ANR-10-LABX-58.

\addtocontents{toc}{\protect\setcounter{tocdepth}{1}}
\printbibliography

\newpage
\appendix
\addcontentsline{toc}{section}{\textbf{Appendix}}
\addtocontents{toc}{\protect\setcounter{tocdepth}{0}}

\section{Proofs of \texorpdfstring{\cref{lemma:reparam,lemma:scaled-Brenier}}{Lemmas 3.3 and 3.4}}
\label{subsec:proof-scaled-brenier}

In this section, we prove \cref{lemma:reparam,lemma:scaled-Brenier}. Let us first recall the statement of \cref{lemma:reparam}:
\begin{nblemma}{3.3}
    Let $\mu,\nu\in \Pp(E)$ and let $\psi_1,\psi_2:E\to F$ be homeomorphisms. Let $\tilde c:F\times F\to\RR$ and consider the cost $c(x,y)= c( \psi_1(x),\psi_2(y))$. Then a map is optimal for the cost $c$ between $\mu$ and $\nu$ if and only if it is of the form $\psi_2^{-1}\circ T\circ\psi_1$ with $T:F\times F$ optimal for the cost $\tilde c$ between $\psi_{1\pushonly}\mu$ and $\psi_{2\pushonly}\nu$.
\end{nblemma}

        \begin{proof}
            Remark that the continuity of $\psi_1$, $\psi_2$ and their inverse ensures their measurability. We have the following equalities:
            \begin{align*}
                \argmin_{\pi \in \Pi(\mu,\nu)} \int \tilde c(\psi_1 (x), \psi_2(y))\dd\pi(x,y)& =\argmin_{\pi \in \Pi(\mu,\nu)} \int \tilde c( u,v)\dd(\psi_1,\psi_2)\push\pi(u,v)  \\
                & =(\psi_1 ^{-1} ,\psi_2^{-1} )\push \argmin_{\tilde\pi \in \Pi(\psi_{1\pushonly} \mu, \psi_{2\pushonly}\nu)} \int \tilde c(u,v)\dd\tilde\pi(u,v)
            \end{align*}
            since the mapping $(\psi_1 ^{-1} ,\psi_2^{-1} )\push$
            is a one-to-one correspondence from $\Pi(\psi_{1\pushonly} \mu, \psi_{2\pushonly}\nu)$ to $\Pi(\mu,\nu)$ by bijectivity of $\psi_1$ and $\psi_2$. This bijectivity ensures that any optimal deterministic transport plan $\tilde\pi\opt$ between $\psi_{1\pushonly}\mu$ and $\psi_{2\pushonly}\nu$ induces an optimal deterministic transport plan $\pi\opt$ between $\mu$ and $\nu$, and \textit{vice versa}. Writing $\tilde\pi\opt =(\id,T)\push(\psi_{1\pushonly}\mu)$, this plan $\pi\opt$ is given by
            \begin{align*}
                \pi \opt &= (\psi_1 ^{-1} ,\psi_2^{-1} )\push\tilde \pi \opt  \\
                    & =(\psi_1 ^{-1} ,\psi_2^{-1} )\push(\id,T)\push\psi_{1\pushonly} \mu \\
                    & =(\id,\psi_2^{-1}\circ T\circ\psi_1)\push\mu. \qedhere
            \end{align*}
        \end{proof}
\noindent We now recall the statement of \cref{lemma:scaled-Brenier}:
\begin{nblemma}{3.4}
    Let $h\geq 1$ and $\mu,\nu\in \Pp(\RR^h)$ with compact supports and such that $\mu\ll\Leb_h$. Consider the cost $c(x, y)= -\langle \psi_1(x),\psi_2(y)\rangle$ where $\psi_1,\psi_2:\RR^h\to \RR^h$ are diffeomorphisms.
    Then there exists a unique optimal transport plan between $\mu$ and $\nu$ for the cost $c$, and it is induced by a map $t:\RR^h\to \RR^h$ of the form $t=\psi_2^{-1}\circ\nabla f\circ\psi_1$, with $f:\RR^h\to\RR$ convex.
\end{nblemma}
        \begin{proof}
            As $\psi_{1\pushonly}\mu$ has a density with respect to the Lebesgue measure since $\psi_1$ is a diffeomorphism and $\psi_{1\pushonly}\mu$ and $\psi_{2\pushonly}\nu$ have compact support, \citeauthor{brenier1987decomposition}'s theorem states that there exists a unique optimal transport plan between $\psi_{1\pushonly}\mu$ and $\psi_{2\pushonly}\nu$ and that it is induced by a map $\nabla f$, where $\phi$ is a convex function. Using \cref{lemma:reparam} then yields the result.
        \end{proof}
        \begin{remark}[Hypothesis of \cref{lemma:scaled-Brenier}]
            In the proof of \cref{lemma:scaled-Brenier}, we only needed (i) $\psi_1$, $\psi_2$ and their inverse to be measurable, (ii) $\psi_{1\pushonly}\mu$ to have a density with respect to the Lebesgue measure, and (iii) $\psi_{1\pushonly}\mu$ and $\psi_{2\pushonly}\nu$ to have compact support. Imposing $\psi_1$ to be a diffeomorphism and $\psi_2$ to be a homeomorphism ensures both (i) and (ii) and is natural to expect.
        \end{remark}

\section{Measurable selection of maps in the manifold setting}

\subsection{Measurability of set-valued maps}
\label{sec:measurable_set_valued}

    Let $X$ and $U$ be two topological spaces, and let $\Bb$ denote the Borel $\sigma$-algebra on $X$. A set-valued map $S$ is a map from $X$ to $P(U)$ (the set of subsets of $U$) and will be denoted by $S : X \rightrightarrows U$.
    The idea is to introduce a notation that is consistent with the case where $S(x) = \{u\}$ for all $x$ in $X$, where we want to retrieve the standard case of maps $X \to U$. Definitions are taken from \citeauthor{rockafellar2009variational}~\cite{rockafellar2009variational}, where measurability is studied when $U = \RR^n$. Most results and proofs adapt to a more general setting, in particular when $U$ is a complete Riemannian manifold $M$, which we shall assume in the following.
    For the sake of completeness, we provide all the proofs and highlight those that require specific care when replacing $\RR^n$ by a manifold.
    Of importance for our proofs, we define the \emph{pre-image} of a set $B \subset U$ by $S$,
    \begin{equation*}
    S^{-1}(B) = \{ x \in X \mid S(x) \cap B \neq \varnothing\},
    \end{equation*}
    and the \emph{domain} of $S$, defined as $S^{-1}(U)=\{ x \in X \mid S(x) \neq \varnothing \}$.
    We will often use the following relation: if a set $A$ can be written as $A = \bigcup_k A_k$, then $S^{-1}(A) = \bigcup_k S^{-1}(A_k)$. Indeed,
    \begin{equation*}
    x \in S^{-1}(A) \iff S(x) \cap A \neq \varnothing \iff \exists k,\, S(x) \cap A_k \neq \varnothing \iff \exists k,\, x \in S^{-1}(A_k) \iff x \in \textstyle\bigcup_k S^{-1}(A_k).
    \end{equation*}
    A set-valued map $S : X \rightrightarrows U$ is said to be \emph{measurable} if, for any open set $O \subset U$,
    \begin{equation*}
        S^{-1}(O) \in \Bb.
    \end{equation*}
    Note that if $S$ is measurable (as a set-valued map), then its domain must be measurable as well (as an element of $\Aa$).
    We say that $S : X \rightrightarrows U$ is \emph{closed-valued} if $S(x)$ is a closed subset of $U$ for all $x \in X$.

    \begin{proposition}[{\cite[Theorem~14.3.c]{rockafellar2009variational}}]
    \label{prop:equiv-measure-closedvalued}
        A closed-valued map $S : X \rightrightarrows U$ is measurable if and only if the property ``$S^{-1}(B) \in \Bb$'' holds either:
        \begin{enumerate}[label=(\alph*),nolistsep,wide, leftmargin=*]
        \item for all $B \subset U$ open (the definition);
        \item for all $B \subset U$ compact;
        \item for all $B \subset U$ closed.
        \end{enumerate}
    \end{proposition}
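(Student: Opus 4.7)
The strategy is to prove the chain of implications $(a)\Rightarrow (b) \Rightarrow (c) \Rightarrow (a)$, exploiting three separate metric/topological features of the target manifold $M$: its metric structure, its $\sigma$-compactness (which a complete Riemannian manifold enjoys as a second-countable, locally compact Hausdorff space), and the closed-valued assumption on $S$.

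For $(c)\Rightarrow(a)$, I would write any open $O \subset M$ as a countable union of closed sets $F_n \defeq \{u \in M \mid d(u, M\setminus O) \geq 1/n\}$. Then $S^{-1}(O)=\bigcup_n S^{-1}(F_n) \in \Bb$ since $\Bb$ is a $\sigma$-algebra and each $S^{-1}(F_n)$ lies in $\Bb$ by $(c)$; no use of the closed-valued assumption is needed here.

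For $(b)\Rightarrow(c)$, I would use the $\sigma$-compactness of $M$ to write $M = \bigcup_n K_n$ with each $K_n$ compact. Given any closed $C \subset M$, the sets $C\cap K_n$ are compact and $C = \bigcup_n (C\cap K_n)$, so $S^{-1}(C) = \bigcup_n S^{-1}(C\cap K_n)\in \Bb$ by $(b)$.

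The main obstacle is the implication $(a)\Rightarrow(b)$, which is the only step where the closed-valued assumption enters in an essential way. For compact $K \subset M$, I would introduce the open $\tfrac{1}{n}$-tubes $O_n \defeq \{u \in M \mid d(u,K) < 1/n\}$ and establish the equality
\[
S^{-1}(K) \;=\; \bigcap_{n \geq 1} S^{-1}(O_n)\,,
\]
which via $(a)$ will yield the desired measurability. The inclusion $\subset$ is immediate. For the reverse inclusion, given $x \in \bigcap_n S^{-1}(O_n)$, I would pick for each $n$ some $u_n \in S(x)\cap O_n$ and $k_n \in K$ with $d(u_n,k_n) < 1/n$; the compactness of $K$ provides a subsequence $k_{n_j} \to k \in K$, forcing $u_{n_j} \to k$ as well. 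The closed-valued assumption on $S$ then ensures $k \in \overline{S(x)} = S(x)$, whence $x \in S^{-1}(K)$. Without the closed-valued hypothesis, the limit point could escape $S(x)$, which is exactly why $(b)$ and $(c)$ would fail for general set-valued maps.
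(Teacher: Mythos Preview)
Your proposal is correct and follows essentially the same approach as the paper. The key implication $(a)\Rightarrow(b)$ is argued identically via open $1/n$-tubes around the compact set together with the closed-valued hypothesis; your $(b)\Rightarrow(c)$ via $\sigma$-compactness matches the paper's argument (there mislabeled as ``$(c)\Rightarrow(b)$''), and your $(c)\Rightarrow(a)$ via the $F_\sigma$ property of open sets replaces the paper's direct $(b)\Rightarrow(a)$ step (open sets as countable unions of compact balls), but this is a cosmetic difference in how the cycle is closed.
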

    \begin{proof}\
    \begin{itemize}[nolistsep,wide, leftmargin=*]
    \item (a) $\Longrightarrow$ (b): For a compact $B \subset U$, let $B_k = \{ x \in U \mid d(x,B) < k^{-1} \}$ for $k \geq 0$ (which is open). Note that
    \begin{equation*}
    x \in S^{-1}(B) \iff S(x) \cap B \neq \varnothing \iff S(x) \cap B_k \neq \varnothing \text{ for all $k$,}
    \end{equation*}
    because $S(x)$ is a closed set. Hence $S^{-1}(B) = \bigcap_k S^{-1}(B_k)$. All the $S^{-1}(B_k)$ are measurable, and so is $S^{-1}(B)$ as a countable intersection of measurable sets.

    \item (b) $\Longrightarrow$ (a): Fix $O$ an open set of $U$. As we assumed $U$ to be a complete separable Riemannian manifold, $O$ can be written as a countable union of compact balls: $O = \bigcup_n \overline{B(x_n, r_n)}$.

    \item (b) $\Longrightarrow$ (c): Immediate.

    \item (c) $\Longrightarrow$ (b): A closed set $B$ can be obtained as a countable union of compact sets by letting $B = \bigcup_n B \cap \overline{B(x_0, n)}$ for some $x_0$.
    Hence $S^{-1}(B) = \bigcup_n S^{-1}(B \cap \overline{B(x_0,n)})$ is in $\Bb$. \qedhere
    \end{itemize}
    \end{proof}

    Now, we state a result on operations that preserve the measurability of closed-set valued maps. The proof requires adaptation from the one of \cite{rockafellar2009variational} because the latter uses explicitly the fact that one can compute Minkowski sums of sets (which may not make sense on a manifold).

    \begin{proposition}[{\cite[Proposition 14.11]{rockafellar2009variational}}, adapted to the manifold case]
    \label{prop:intersection-of-measurable}
        Let $S_1$ and $S_2 : X \rightrightarrows U$ be two measurable closed-set valued maps.
        Then
        \begin{itemize}[nolistsep,wide, leftmargin=*]
            \item $P : x \mapsto S_1(x) \times S_2(x)$ is measurable as a closed-valued map in $U \times U$ (equipped with the product topology).
            \item $Q : x \mapsto S_1(x) \cap S_2(x)$ is measurable.
        \end{itemize}
    \end{proposition}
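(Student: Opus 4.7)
My plan is to use the countable base of the product topology. Since $U$ is a separable metric space, it admits a countable base of open sets $\{B_k\}_{k\geq 1}$, and every open $O \subset U \times U$ decomposes as $O = \bigcup_k (B_{i_k} \times B_{j_k})$ for a suitable countable collection of indices. Since $(S_1(x) \times S_2(x)) \cap (B_{i_k} \times B_{j_k}) \neq \varnothing$ iff $S_1(x) \cap B_{i_k} \neq \varnothing$ and $S_2(x) \cap B_{j_k} \neq \varnothing$, we immediately obtain
\[ P^{-1}(O) = \bigcup_k \bigl[ S_1^{-1}(B_{i_k}) \cap S_2^{-1}(B_{j_k}) \bigr] \in \Bb \]
by measurability of $S_1$ and $S_2$. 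Since $P(x)$ is closed as a product of two closed sets, this shows $P$ is a measurable closed-valued map.

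\textbf{Intersection case.} Here lies the main obstacle: the $\RR^n$ proof in \cite[Prop.~14.11]{rockafellar2009variational} rests on the Minkowski-difference characterization $S_1(x) \cap S_2(x) \neq \varnothing \iff 0 \in S_1(x) - S_2(x)$, which has no analogue on a manifold. I would replace it with a distance-function argument, which is always available on a Riemannian manifold. First, the function $d_{S_2}(x, u) \defeq d_U(u, S_2(x))$ is Carathéodory: it is $1$-Lipschitz in $u$ as the distance to a fixed closed set, and measurable in $x$ for fixed $u_0$ since $\{x : d_{S_2}(x, u_0) < r\} = S_2^{-1}(B(u_0, r))$. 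By separability of $U$ it follows that $d_{S_2}$ is jointly $\Sigma \otimes \Bb(U)$-measurable. Second, since $U$ is a Polish space, $S_1$ admits a \emph{Castaing representation}: there exist measurable selections $\sigma_n : \operatorname{dom}(S_1) \to U$ such that $S_1(x) = \overline{\{\sigma_n(x) : n \geq 1\}}$ for every $x \in \operatorname{dom}(S_1)$.

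Combining these ingredients, for any open $O \subset U$ I would cover $O$ by a countable family of open balls $(B_k)_k$ with $\overline{B_k} \subset O$ (using separability and local compactness of the manifold) and express
\[ Q^{-1}(O) = \bigcup_{k} \bigcap_{m \geq 1} \bigcup_{n \geq 1} \bigl\{x : \sigma_n(x) \in B_k \text{ and } d_{S_2}(x, \sigma_n(x)) < 1/m\bigr\}. \]
Each set in the union is measurable by composition (measurability of $\sigma_n$, joint measurability of $d_{S_2}$). The key step left to verify is the equivalence: closedness of $S_2(x)$ promotes the chain $d_{S_2}(x, \sigma_n(x)) < 1/m$ (for all $m$) to $\sigma_n(x) \in S_2(x)$, and density of the $\sigma_n(x)$'s in $S_1(x)$ combined with openness of $B_k$ guarantees that any common point in $S_1(x) \cap S_2(x) \cap O$ is approximated by some $\sigma_n(x)$ landing in $B_k$. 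Closedness of $Q(x) = S_1(x) \cap S_2(x)$ as an intersection of two closed sets then completes the verification that $Q$ is a measurable closed-valued map.
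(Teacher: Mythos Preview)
Your product case is essentially the paper's argument verbatim.

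For the intersection case, your argument is correct but takes a genuinely different route. The paper avoids Castaing representations and distance functions entirely by reducing back to the product case via the diagonal: for a compact $C \subset U$, set $R_j(x) = S_j(x) \cap C$ (still measurable and closed-valued), and observe that
\[ R_1(x) \cap R_2(x) \neq \varnothing \iff (R_1(x) \times R_2(x)) \cap \Delta \neq \varnothing, \]
where $\Delta = \{(u,u) : u \in U\}$ is the closed diagonal in $U \times U$. Hence $Q^{-1}(C) = (R_1 \times R_2)^{-1}(\Delta)$, which is measurable by the product case just established together with \cref{prop:equiv-measure-closedvalued}. This is precisely the manifold-safe substitute for the Minkowski-difference trick in $\RR^n$.

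The tradeoffs: the paper's argument is entirely self-contained, uses nothing beyond what was just proved, and exploits the product case to full effect. Your approach is also standard measurable-selection folklore and works, but it imports the Castaing representation theorem, a heavier tool than anything else in the section, and relies on local compactness of $U$ to extract the limit point $u$ from the $\sigma_{n_m}(x)$ (fine here, since $U$ is a finite-dimensional complete Riemannian manifold). One small wrinkle to patch in your version: the selections $\sigma_n$ are only defined on $\operatorname{dom}(S_1)$, so the displayed identity for $Q^{-1}(O)$ should be intersected with $\operatorname{dom}(S_1)$ to avoid spurious contributions from whatever extension of $\sigma_n$ you choose off the domain.
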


    \begin{proof}
    The first point can be proved in the same spirit as the proof proposed by \citeauthor{rockafellar2009variational}. Namely, let $O'$ be an open set in $U \times U$. By definition of the product topology, $O'$ can be obtained as $\bigcup_n O_1^{(n)} \times O_2^{(n)}$ where $O_1^{(n)}$ and $O_2^{(n)}$ are open sets in $U$. Then $P^{-1}(O') = \bigcup_n P^{-1}(O_1^{(n)} \times O_2^{(n)})$. Now, observe that
    \begin{equation*}
    P^{-1}(A \times B) = \{ x\mid S_1(x) \times S_2(x) \in A \times B\} = \{x\mid S_1(x) \in A \text{ and } S_2(x) \in B \} = S_1^{-1}(A) \cap S_2^{-1}(B),
    \end{equation*}
    so that finally, $P^{-1}(O') = \bigcup_n S_1^{-1}(O_1^{(n)}) \cap S_2^{-1}(O_2^{(n)})$ is measurable as a countable union of (finite) intersection of measurable sets (given that $S_1$ and $S_2$ are measurable). Note that this does not require $S_1$ and $S_2$ to be closed-valued.

    Now, let us focus on the second point, which requires more attention. Thanks to \cref{prop:equiv-measure-closedvalued}, it is sufficient to show that $Q^{-1}(C) \in \Bb$ for any compact set $C \subset U$.
    In \cite{rockafellar2009variational}, this is done by writing
    \begin{multline*}
    Q^{-1}(C) = \{ x\mid S_1(x) \cap S_2(x) \cap C \neq \varnothing \} = \{x\mid R_1(x) \cap R_2(x) \neq \varnothing \} \\= \{x\mid 0 \in (R_1(x) - R_2(x)) \} = (R_1 - R_2)^{-1}(0),
    \end{multline*}
    where $R_j(x) = S_j(x) \cap C$ (which is also closed-valued), and using the fact that the (Minkowski) difference of measurable closed-valued maps is measurable as well \cite[Proposition 14.11.c]{rockafellar2009variational}.
    To adapt this idea (we cannot consider Minkowski differences in our setting), we introduce the diagonal $\Delta = \{(u,u)\mid u \in U\} \subset U \times U$. Now, observe that
    \begin{equation*}
    R_1(x) \cap R_2(x) \neq \varnothing \iff (R_1(x) \times R_2(x)) \cap \Delta \neq \varnothing,
    \end{equation*}
    that is $x \in R^{-1}(\Delta)$, where $R(x) = R_1(x) \times R_2(x)$. Since the maps $R_1$ and $R_2$ are measurable closed-valued maps (inherited from $S_1,S_2$), so is $R$ according to the previous point. And since $\Delta$ is closed, $R^{-1}(\Delta) = Q^{-1}(C)$ is measurable.
    \end{proof}

\subsection{Proof of \texorpdfstring{\Cref{prop:selection-manifold}}{Proposition 2.10}}
\label{sec:appendix-proof-fontbona-manifold}
Let us first recall the result.
\begin{nbproposition}{2.10}[Measurable selection of maps, manifold case]
    Let $M$ be a complete Riemannian manifold and $(B, \Sigma, m)$ a measure space.
    Consider a measurable function $B\ni u \mapsto (\mu_u, \nu_u) \in P(M)^2$. Assume that for $m$-almost every $u\in B$, $\mu_u\ll\vol_M$ and $\mu_u$ and $\nu_u$ have a finite transport cost. Let $T_u$ denote the (unique by \cref{prop:quad-cost-manifold-villani}) optimal transport map induced by the squared distance cost $d_M^2$ on $M$ between $\mu_u$ and $\nu_u$.
    Then there exists a measurable function $(u,x)\mapsto T(u,x)$ such that for $m$-almost every $u$, $T(u,x)=T_{u}(x)$, $\mu_{u}$-a.e.
\end{nbproposition}

The proof is essentially an adaptation of the one of Theorem~1.1 from \citeauthor{fontbona2010measurability}~\cite{fontbona2010measurability}, with additional care required since we do not have access to a linear structure on the manifold $M$. It relies on the measurability of set-valued maps (see \cite[Chapters 5 and 14]{rockafellar2009variational} and \cref{sec:measurable_set_valued} for a summary).
In the following, we consider a partition $(A_{nk})_n$ of $M$ made of cells with diameter $\leq 2^{-k}$ and such that $(A_{nj})_n$ is a refinement of $(A_{nk})_n$ in sense that each $A_{nk}$ is itself partitioned by some $A_{n'j}$. 
The crucial point regarding measurability is the following proposition.

\begin{proposition}
    \label{prop:Bnk-measurable}
    The set $B_{nk} = \{ (u,x)\mid T_u(x) \in A_{nk} \}$ is measurable.
\end{proposition}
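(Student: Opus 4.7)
The plan is to realize $B_{n,k}$ as the preimage of $A_{n,k}$ under a measurable closed-valued set-valued map built from the support of the optimal transport plan $\pi_u := (\id, T_u)\push \mu_u$, and then invoke \cref{prop:equiv-measure-closedvalued}. The key observation is that, since $\pi_u$ is concentrated on the graph of $T_u$ for $m$-a.e.~$u$, one has the almost-everywhere equivalence $T_u(x) \in A_{n,k}$ iff there exists $y \in A_{n,k}$ with $(x,y) \in \supp(\pi_u)$. Thus $B_{n,k}$ is determined, up to a set of $m \otimes \mu_u$-measure zero, by the support $\Gamma(u) := \supp(\pi_u)$, and it suffices to prove measurability through this latter object.

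First I would show that $u \mapsto \pi_u \in \Pp(M \times M)$ is measurable with respect to the weak topology. This follows from the measurability of $u \mapsto (\mu_u, \nu_u)$ combined with the stability of optimal plans under weak convergence of marginals: under our hypotheses the optimal plan is unique by \cref{prop:quad-cost-manifold-villani}, and a standard compactness-and-uniqueness argument then forces the plan to depend continuously, hence measurably, on its marginals. From this, the set-valued map $\Gamma$ is measurable and closed-valued: for any open $V \subset M \times M$, the set $\{u : \Gamma(u) \cap V \neq \varnothing\}$ equals $\{u : \pi_u(V) > 0\}$, which is measurable by the Portmanteau theorem. I would then introduce the set-valued map $R : B \times M \rightrightarrows M$ defined by $R(u,x) := \{y \in M : (x,y) \in \Gamma(u)\}$, obtained as the second-coordinate projection of $\Gamma(u) \cap (\{x\} \times M)$. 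The intersection is measurable and closed-valued by \cref{prop:intersection-of-measurable} applied to the trivial lift of $\Gamma$ and the (easily checked measurable) map $(u,x) \mapsto \{x\} \times M$; the projection step preserves measurability via a standard argument that carries over to the manifold setting using \cref{prop:equiv-measure-closedvalued}. Taking the cells $A_{n,k}$ to be Borel (which can always be arranged by refining the partition), $R^{-1}(A_{n,k})$ is measurable, and coincides with $B_{n,k}$ up to a null set, which is enough.

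The main obstacle I anticipate is justifying the measurability of $u \mapsto \pi_u$ in the first step: stability of optimal plans is classical in Euclidean settings, but on a Riemannian manifold some care is required, notably because uniqueness of the Monge map relies on absolute continuity of $\mu_u$, which holds only for $m$-a.e.~$u$ by hypothesis (and the target measure $\nu_u$ need not be absolutely continuous). An alternative route would bypass this issue by establishing measurability of $\Gamma$ directly through the $c$-cyclic monotonicity characterization of the supports of optimal plans, and then applying a Kuratowski--Ryll-Nardzewski-style selection result compatible with the manifold framework of \cref{sec:measurable_set_valued}.
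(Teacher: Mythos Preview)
Your approach is essentially the paper's: both identify $B_{n,k}$ (up to a null set) with the domain of the closed-valued map $(u,x)\mapsto(\{x\}\times A_{n,k})\cap\supp(\pi_u)$, establish measurability of $u\mapsto\supp(\pi_u)$ via stability of optimal plans together with Portmanteau, and then invoke \cref{prop:intersection-of-measurable}. Your intermediate projection $R$ is an unnecessary detour, since by definition $R^{-1}(A)=\{(u,x):(\{x\}\times A)\cap\Gamma(u)\neq\varnothing\}$ is already the domain of an intersection of closed-valued maps; the paper works with this domain directly and never needs to project.

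There is one genuine slip in your final step. You conclude that $R^{-1}(A_{n,k})$ is measurable for \emph{Borel} $A_{n,k}$ by appeal to \cref{prop:equiv-measure-closedvalued}, but that proposition only guarantees measurable preimages for open, closed, or compact targets. Preimages under set-valued maps do not respect complementation, so there is no automatic extension to the full Borel $\sigma$-algebra; ``arranging $A_{n,k}$ to be Borel'' does not help. The paper closes this gap by first proving the statement for closed $F$ (\cref{lemma:ferme-mesurable}) and then inner-approximating each cell $A_{n,k}$ by a sequence of closed sets $F_j$, so that $B_{n,k}=\bigcup_j B_{F_j}$. This requires the cells to be $F_\sigma$, which is easily arranged when constructing the partition.
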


\noindent Its proof relies on a core lemma:

\begin{lemma}
\label{lemma:ferme-mesurable}
    Let $F \subset M$ be a closed set. Then the set $B_F = \{ (u,x)\mid T_u(x) \in F \}$ is measurable.
\end{lemma}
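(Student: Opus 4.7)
The strategy is to express $B_F$, up to a negligible set, as the preimage of $F$ under a closed-valued set-valued map on $B \times M$, then invoke the measurability machinery of \cref{sec:measurable_set_valued}.

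First, I would show that the set-valued map $\Gamma : B \rightrightarrows M \times M$ defined by $\Gamma(u) = \supp \pi_u$, where $\pi_u = (\id, T_u)\push \mu_u$ is the unique optimal plan, is measurable and closed-valued. Closedness is immediate. For measurability, for any open $O \subset M \times M$ one has $\Gamma^{-1}(O) = \{u : \pi_u(O) > 0\}$. The map $u \mapsto \pi_u$ is measurable valued in $\Pp(M \times M)$ (with the weak topology): this follows from the assumed measurability of $u \mapsto (\mu_u, \nu_u)$ combined with the uniqueness of $\pi_u$ furnished by \cref{prop:quad-cost-manifold-villani} and the standard stability of optimal transport under weak convergence of the marginals. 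Evaluation on open sets then yields measurability of $u \mapsto \pi_u(O)$, hence of $\Gamma^{-1}(O)$.

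Next, I would lift $\Gamma$ to a set-valued map on $B \times M$. Define $\Psi : B \times M \rightrightarrows M$ by $\Psi(u, x) = \{y \in M : (x, y) \in \Gamma(u)\}$. Closedness of $\Psi(u, x)$ follows from that of $\Gamma(u)$ since $y \mapsto (x, y)$ is continuous. To obtain measurability, introduce two auxiliary maps $\hat\Gamma, \hat\Delta : B \times M \rightrightarrows M \times M$ by $\hat\Gamma(u, x) = \Gamma(u)$ and $\hat\Delta(u, x) = \{x\} \times M$. The former inherits measurability from $\Gamma$; the latter is measurable closed-valued as one sees by writing any open set in $M \times M$ as a countable union of open rectangles $V_1 \times V_2$ and observing that $\hat\Delta^{-1}(V_1 \times V_2) = B \times V_1$ whenever $V_2 \neq \varnothing$. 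By \cref{prop:intersection-of-measurable}, the intersection $\hat\Gamma \cap \hat\Delta$ is measurable closed-valued, and for any open $O \subset M$ one has the identity $\Psi^{-1}(O) = (\hat\Gamma \cap \hat\Delta)^{-1}(M \times O)$, which is then measurable. Hence $\Psi$ is measurable closed-valued, and \cref{prop:equiv-measure-closedvalued} yields measurability of $\Psi^{-1}(F)$ for the closed set $F$.

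To conclude, since $\pi_u$ is concentrated on the graph of $T_u$, one has $\Psi(u, x) = \{T_u(x)\}$ for $\mu_u$-a.e. $x$ and $m$-a.e. $u$. Therefore $B_F$ agrees with the measurable set $\Psi^{-1}(F)$ up to a set of the form $\{(u, x) : x \in N_u\}$ with $N_u$ a $\mu_u$-null set, which is harmless for the subsequent construction of the global map $T$ in the proof of \cref{prop:selection-manifold}. The main obstacle in the plan is the lifting step: although both $\hat\Gamma$ and $\hat\Delta$ are measurable closed-valued, their intersection acts on the product space $B \times M$, and one must verify carefully that the intersection result of \cref{sec:measurable_set_valued} applies in this product setting and that the identity $\Psi^{-1}(O) = (\hat\Gamma \cap \hat\Delta)^{-1}(M \times O)$ holds as stated.
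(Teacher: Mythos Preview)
Your argument is correct and is essentially the paper's own proof: both identify $B_F$ with the domain of the intersection of two measurable closed-valued maps on $B\times M$, one being $(u,x)\mapsto\{x\}\times F$ (the paper) or $(u,x)\mapsto\{x\}\times M$ followed by $\Psi^{-1}(F)$ (you), the other being $(u,x)\mapsto\overline{\graph(T_u)}$ (the paper) or $(u,x)\mapsto\supp\pi_u$ (you), with measurability of the latter obtained in both cases from $u\mapsto(\mu_u,\nu_u)$, stability of optimal plans, and the Portmanteau-type argument for $\pi\mapsto\supp\pi$. If anything you are slightly more careful than the paper in noting that the identification with $B_F$ holds only up to $\mu_u$-null fibers, a point the paper glosses over.
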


\noindent The key will be to identify this set as the domain of a measurable set-valued map, see \cref{sec:measurable_set_valued}.

\begin{proof}[Proof of \cref{lemma:ferme-mesurable}]
Observe that
\begin{equation*}
B_F = \{(u,x)\mid (\{x\} \times F) \cap \graph(T_u) \neq \varnothing\},
\end{equation*}
where $\graph(T_u) \defeq \{(x,T_u(x))\mid x \in M\}$ denotes the topological closure of the graph of the optimal transport map $T_u$ that pushes $\mu_u$ onto $\nu_u$.
Let $S_1 : (u,x) \mapsto \{x\} \times F$ and $S_2 : (u,x) \mapsto \graph(T_u)$, so that $B_F = \mathrm{dom}(S)$, where $S(x) = S_1(x) \cap S_2(x)$. According to \cref{prop:intersection-of-measurable}, given that $S_1$ and $S_2$ are closed-valued, if they are measurable, so is $S$, and so is $B_F$ as the domain of a measurable map. Establishing the measurability of these two maps can be easily done by adapting the work of \cite{fontbona2010measurability}. We give details here for the sake of completeness.
\begin{itemize}[wide=0pt]
\item \textit{Measurability of $S_1$:} Let $O \subset M \times M$ be open. 
Observe that
\begin{equation*}
S_1^{-1}(O) = \{(u,x) \mid \{x\} \times F \cap O \neq \varnothing \} = B \times \{x \mid \{x\} \times F \cap O \neq \varnothing \}. 
\end{equation*}
Fix $z \in F$.
There exists $\epsilon > 0$ such that $B(x,\epsilon) \times \{z\} \subset O$ (since $O$ is open), and thus $B(x,\epsilon) \times F \cap O \neq \varnothing$, proving that there is a neighborhood of $x$ included in $\{x \mid \{x\} \times F \cap O \neq \varnothing \}$ which is thus open, making in turn $S_1^{-1}(O)$ open (hence measurable), hence the measurability of $S_1$.
\item \textit{Measurability of $S_2$:} Given that $u \mapsto (\mu_u,\nu_u)$ is measurable by assumption, and that measurability is preserved by composition, we want to show that
\begin{enumerate}[(i)]
    \item the map $S : (\mu,\nu) \mapsto \Pi\opt(\mu,\nu)$ (the set of optimal transport plans between $\mu$ and $\nu$ for the squared distance cost on $M$) is measurable, and
    \item the map $\mathfrak{U} : \pi \in P(M^2) \mapsto \supp\pi$ satisfies that $\mathfrak{U}^{-1}(O)$ is open for any open set $O \subset P(M^2)$.
\end{enumerate}
From these two points, we obtain the measurability of $(U \circ S)^{-1}(O)$, thus the measurability of $S_2$.

To get (i), observe first that $S$ is closed-valued, so that it is sufficient to prove that $S^{-1}(C)$ is measurable for any closed set $C \subset P(M^2)$ according to \cref{prop:equiv-measure-closedvalued}. Let $C \subset P(M^2)$ be closed. Then, $S^{-1}(C) = \{(\mu,\nu)\mid\Pi\opt(\mu,\nu) \cap C \neq \varnothing \}$. Consider a sequence $(\mu_n, \nu_n)_n$ in $S^{-1}(C)$ that converges to $(\mu,\nu)$ for the weak topology. Let $\pi_n \in \Pi\opt(\mu_n,\nu_n) \cap C$. According to \cite[Theorem~5.20]{villani2009optimal}, $(\pi_n)_n$ admits a weak limit $\pi$ in $\Pi\opt(\mu,\nu)$; but since $C$ is closed, $\pi \in C$, so $(\mu,\nu) \in S^{-1}(C)$, which is closed (hence measurable), proving the measurability of $S$.

(ii) simply follows from the fact that
\begin{equation*}
\mathfrak{U}^{-1}(O) = \{ \pi\mid \supp\pi \cap O \neq \varnothing \} = \{ \pi\mid \pi(O) > 0 \},
\end{equation*}
which is open. Indeed, the Portmanteau theorem shows that if $\pi_n \to \pi$ (weakly) and $\pi_n(O) = 0$, then $0 = \liminf \pi_n(O) \geq \pi(O) \geq 0$, so $\pi(O) = 0$. The complementary set of $U^{-1}(O)$ is closed, that is $\mathfrak{U}^{-1}(O)$ is open. \qedhere
\end{itemize}
\end{proof}

\begin{proof}[Proof of \cref{prop:Bnk-measurable}]
Using that $A_{nk}$ can be inner-approximated by a sequence of closed set $F_j \subset A_{nk}$, we obtain a sequence of sets $(B_{F_j})_j$ such that $\bigcup_j B_{F_j} = A_{nk}$. By \cref{lemma:ferme-mesurable}, the $(B_{F_j})_j$ are measurable, so is $A_{nk}$ as the (countable) union of measurable sets.
\end{proof}

\noindent We can now prove our main proposition on measurable selections of maps. 

\begin{proof}[Proof of \cref{prop:selection-manifold}]
Recall that we assume that $M = \bigsqcup_n A_{nk}$. For each $(n,k)$, select (in a measurable way) an element $a_{nk}\in A_{nk}$. 
Then, define the map
\begin{equation*}
    T^{(k)} : (u,x) \mapsto a_{nk},\ \text{ where } T_u(x) \in A_{nk}\, .
\end{equation*}
This map is measurable: for any open $O \subset M$,
\begin{equation*}
    T^{(k),-1}(O) = \{(u,x) \mid T_u(x) \in A_{nk} \} = B_{nk},
\end{equation*}
which is measurable by \cref{prop:Bnk-measurable}.
Now, for two maps $f,g : B \times M \to M$, let $D_1$ denotes the natural $L^1$ distance on $M$, that is
\begin{equation*}
    D_1(f,g) = \int_B \int_M d\big(f(u,x), g(u,x)\big) \dd \mu_u(x)  \dd m(u).
\end{equation*}
This yields a complete metric space \cite{chiron2007definitions}, and $(T^{(k)})_k$ is a Cauchy sequence for this distance. Indeed, for $k \leq j$ two integers, recall that we assumed that $(A_{nj})_n$ is a refinement of $(A_{nk})_n$, yielding
\begin{align*}
    D_1(T^{(k)}, T^{(j)}) &= \int_{B} \int_M d\big(T^{(k)}(u,x), T^{(j)}(u,x)\big) \dd \mu_u (x) \dd m(u) \\
    &= \int_{B} \int_M \sum_n \sum_{n' :\,A_{n'j} \subset  A_{nk}}  \one_{B_{n'j}}(u,x) d(a_{nk}, a_{n'j}) \dd \mu_u(x) \dd m(u) \\
    &= \int_B \int_M \sum_n \sum_{n' :\,A_{n'j} \subset  A_{nk}} d(a_{nk},a_{n'j}) \dd \nu_u(A_{n'j}) \dd m(u) \\
    &\leq 2^{-k}
\end{align*}
where we successively used (i) that for all $u$, $\int_{M} \one_{B_{n'j}}(u,x) \dd \mu_u(x) = \nu_u(A_{n'j})$ by construction, (ii) that the diameter of the partition $A_{nk}$ is less than or equal to $2^{-k}$ and (iii) that $\nu_u$ and $m$ are probability measures. Note that claim (i) is given by
\begin{equation*}
(u,x) \in B_{n'j} \iff T_u(x) \in A_{n'j} \iff x \in \mu_u(T_u^{-1}(A_{n'j})) = T_{u \pushonly} \mu_u(A_{n'j})=\nu_u(A_{n'j}).
\end{equation*}
Now, let $T$ denote the limit of $(T^{(k)})_k$ (which is measurable). It remains to show that $T(u,x) = T_u(x)$, $m$-a.e. This can be obtained by proving that
\begin{equation*}
    \int_M g(x) f(T(x,u)) \dd \mu_u(x) = \int_M g(x) f(T_u(x)) \dd \mu_u(x),
\end{equation*}
for any pair $f,g : M \to \RR$ of bounded Lipschitz-continuous functions \cite[Lemma~2.24]{van2000asymptotic}.
As in \cite{fontbona2010measurability}, let $\|f\| \defeq \sup_{x \neq y} \frac{|f(x) - f(y)|}{d(x,y)} + \sup_x |f(x)|$. The difference between these two terms can be bounded using the partition $(A_{nk})_n$. We have for $m$-a.e.~$u$:
\begin{multline*}
    \left| \int_M g(x) f(T_u(x)) \dd \mu_u(x) - \int_M g(x) f(T(u,x)) \dd \mu_u(x) \right| \\
    \leq \left| \int_M g(x) f(T_u(x)) \dd \mu_u(x) - \int_M g(x) f(T^{(k)}(u,x)) \dd \mu_u(x) \right| + \|g\| \|f\| \int_M d\big( T^{(k)}(u,x), T(u,x) \big) \dd \mu_u(x).
\end{multline*}
Since $T^{(k)} \to T$ in $D_1$, it implies that up to a subsequence, for $m$-a.e.~$u$, the second term tends to zero:
\begin{equation*}
    \int_M d\big(T^{(k)}(u,x), T(u,x)\big) \dd \mu_u(x)\xrightarrow[]{k\to\infty}0.
\end{equation*}
To treat the first term and show that it tends to $0$ as $k \to \infty$ for a subset of $B$ with full $m$-measure, we write for $m$-a.e.~$u$:
\begin{align*}
    \left| \int_M g(x) f(T_u(x)) \dd \mu_u(x) - \int_M g(x) f(T^{(k)}(u,x)) \dd \mu_u(x) \right|
    \leq &\int_M |g(x)| \big| f(T_u(x)) - f(T^{(k)}(u,x))\big| \dd \mu_u(x) \\
    \leq &\|g\| \|f\| \int_M d\big(T_u(x),T^{(k)}(u,x)\big) \dd \mu_u(x) \\
    \leq &\|g\| \|f\|\ 2^{-k} \sum_n \nu_u(A_{nk})\xrightarrow[]{k\to\infty}0,
\end{align*}
which concludes the proof.
\end{proof}

\section{Measure disintegration}

The following definition and theorem are taken from \citeauthor{ambrosio2005gradient}~\cite{ambrosio2005gradient}. 

\label{sec:disintegration}
        \begin{definition}[Measure disintegration]
            Let $\Xx$ and $\Zz$ be two Radon spaces, $\mu  \in \Pp(\Xx)$ and $\varphi : \Xx \to \Zz$ a Borel-measurable function. A family of probability measures $\{\mu_{u}\}_{u \in \Zz} \subset \Pp(\Xx)$ is a \emph{disintegration} of $\mu$ by $\varphi$ if:
            \begin{enumerate}[(i)]
                \item the function $u \mapsto \mu_{u}$ is Borel-measurable;
                \item $\mu_{u}$ lives on the fiber $\varphi^{-1}(u)$, that is for $\varphi\push\mu$-a.e.~$u \in \Zz$,
                \begin{equation*}
                \mu_{u}\big(\Xx \setminus \varphi^{-1}(u)\big)=0,
                \end{equation*}
                and therefore $\mu_{u}(B)=\mu_{u}(B \cap \varphi^{-1}(u))$ for any Borel $B\subset \Xx$;
                \item for every measurable function $f: \Xx \rightarrow[0, \infty]$,
                \begin{equation*}
                \int_{\Xx} f(x) \dd \mu(x)=\int_{\Zz} \Big(\int_{\varphi^{-1}(u)} f(x) \dd \mu_{u}(x)\Big) \dd (\varphi\push\mu)(u).
                \end{equation*}
                In particular, for any Borel $B \subset \Xx$, taking $f$ to be the indicator function of $B$,
                \begin{equation*}
                \mu(B)=\int_{\Zz} \mu_{u}(B) \dd (\varphi\push\mu)(u).
                \end{equation*}
            \end{enumerate}
        \end{definition}
        \begin{theorem}[Disintegration theorem]
            Let $\Xx$ and $\Zz$ be two Radon spaces, $\mu  \in \Pp(\Xx)$ and $\varphi : \Xx \to \Zz$ a Borel-measurable function. There exists a $\varphi\push\mu$-a.e.~uniquely determined family of probability measures $\{\mu_{u}\}_{u \in \Zz} \subset \Pp(\Xx)$ that provides a disintegration of $\mu$ by $\varphi$.
        \end{theorem}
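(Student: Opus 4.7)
The plan is to prove existence and uniqueness separately, using two classical tools: Radon--Nikod\'ym derivatives to read off the "conditional mass" each $\mu_u$ should assign to test functions, and Riesz representation on a compact metric space to assemble these into a bona fide measure. Throughout I set $\nu \defeq \varphi\push\mu$; by definition $\nu$ is a Radon probability measure on $\Zz$.

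First, I would reduce existence to the case where $\Xx$ is a compact metric space. Since $\mu$ is Radon, one can write $\Xx$ as a countable increasing union of compact sets $K_m$ with $\mu(\Xx \setminus K_m) \to 0$, build the disintegration on each $K_m$, and take a limit (or assume w.l.o.g.\ that $\Xx$ is compact metric by Ulam's theorem after removing a $\mu$-null set). Once $\Xx$ is compact metric, $C(\Xx)$ is separable: fix a countable dense $\QQ$-vector subspace $\Dd \subset C(\Xx)$ containing $\mathbf{1}$. For each $f \in \Dd$, the measure $B \mapsto \int_{\varphi^{-1}(B)} f \dd \mu$ on $\Zz$ is absolutely continuous w.r.t.\ $\nu$; let $g_f \in L^1(\nu)$ be its Radon--Nikod\'ym density, fixing a Borel representative. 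Outside a single $\nu$-null set $N_0$ (the countable union of the null sets on which linearity, positivity for $f \geq 0$, or $g_{\mathbf{1}} = 1$ fail for pairs/elements of $\Dd$), the map $L_u : f \mapsto g_f(u)$ is a positive $\QQ$-linear functional on $\Dd$ with $L_u(\mathbf{1}) = 1$, hence extends uniquely by continuity to a positive normalized linear functional on $C(\Xx)$. Riesz representation then yields a unique Borel probability measure $\mu_u$ on $\Xx$ with $\int f \dd\mu_u = L_u(f)$. For $u \in N_0$ set $\mu_u$ arbitrary (e.g.\ $\delta_{x_0}$).

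Next I would verify properties (i)--(iii). Measurability of $u \mapsto \mu_u$ follows because, by construction and density of $\Dd$, $u \mapsto \int f \dd \mu_u$ is Borel for every $f \in C(\Xx)$, and a standard monotone class argument upgrades this to arbitrary bounded Borel $f$. The integral formula (iii) holds by definition on $\Dd$ and, again by monotone convergence and a monotone class argument, extends to all bounded Borel $f$ and then to $f : \Xx \to [0, \infty]$. For the fiber concentration property (ii), use that $\Zz$ is Radon so its topology is generated by a countable base; applying (iii) with $f = \mathbf{1}_{\Xx \setminus \varphi^{-1}(V)}$ for $V$ ranging over a countable base of neighborhoods of each point gives, after throwing away a further $\nu$-null set, that $\mu_u$ is concentrated on every closed set containing $\varphi^{-1}(\{u\})$, hence on $\varphi^{-1}(\{u\})$ itself (here one exploits that $\varphi^{-1}(\{u\})$ is the intersection of such closed sets, using that $\Zz$ is Hausdorff and Radon).

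Finally, uniqueness is relatively short: if $(\mu_u)$ and $(\mu'_u)$ both disintegrate $\mu$, then for every $f$ in the countable dense set $\Dd$ property (iii) forces $\int f \dd\mu_u = \int f \dd\mu'_u$ for $\nu$-a.e.\ $u$. Taking the union of these countably many null sets and invoking density of $\Dd$ in $C(\Xx)$ gives $\mu_u = \mu'_u$ for $\nu$-a.e.\ $u$. I expect the main obstacle to be the fiber concentration step (ii): the Radon--Nikod\'ym construction only sees $\varphi$ through pullbacks of Borel sets in $\Zz$, so pinning $\mu_u$ to the single point-fiber $\varphi^{-1}(\{u\})$ (rather than just to any Borel set containing it) requires the inner/outer regularity afforded by the Radon assumption on $\Zz$ together with a careful countable exhaustion argument.
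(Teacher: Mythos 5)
The paper does not prove this theorem at all---it is imported verbatim from the cited reference \cite{ambrosio2005gradient}, whose proof is precisely the classical Radon--Nikod\'ym-plus-Riesz argument you outline (countable dense family in $C(\Xx)$, densities $g_f$, a.e.\ positive normalized functional, Riesz representation, then fiber concentration via a countable base of $\Zz$), so your sketch is essentially correct and follows the same route. The only loose point is the reduction ``w.l.o.g.\ $\Xx$ compact metric'': removing a $\mu$-null set only leaves a $\sigma$-compact set, and ``building the disintegration on each $K_m$ and taking a limit'' is not quite a construction; the standard fix, as in the cited proof, is to embed the separable metric space $\Xx$ homeomorphically into the Hilbert cube, disintegrate the pushforward measure there, and then use tightness to check that the conditional measures are a.e.\ concentrated on (the image of) $\Xx$ itself.
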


\end{document}